\newtheorem{theo}{Theorem}[section]
\newtheorem{prop}[theo]{Proposition}
\newtheorem{lemm}[theo]{Lemma}
\newtheorem{claim}[theo]{Claim}
\newtheorem{prob}[theo]{Problem}
\numberwithin{equation}{section}
\theoremstyle{definition}
\newtheorem{defi}[theo]{Definition}
\theoremstyle{remark}
\newtheorem{rem}[theo]{Remark}
\newcommand{\Ker}[0]{\operatorname{Ker}}
\newcommand{\Hom}[0]{\operatorname{Hom}}
\newcommand{\Alb}[0]{\operatorname{Alb}}
\newcommand{\Pic}[0]{\operatorname{Pic}}
\newcommand{\tr}[0]{\operatorname{tr}}
\newcommand{\End}[0]{\operatorname{End}}
\newcommand{\rank}[0]{\operatorname{rank}}
\newcommand{\codim}[0]{\operatorname{codim}}
\newcommand{\Sym}[0]{\operatorname{Sym}}
\newcommand{\GL}[0]{\operatorname{GL}}
\newcommand{\deldel}{\sqrt{-1}\partial \overline{\partial}}
\begin{document}

\title[On projective manifolds with pseudo-effective tangent bundle]
{On projective manifolds with \\ pseudo-effective tangent bundle}

\author{Genki HOSONO}
\address{Mathematical Institute, Tohoku University, 
	6-3, Aramaki Aza-Aoba, Aoba-ku, Sendai 980-8578, Japan.}
\email{{\tt genki.hosono@gmail.com}}

\author{Masataka IWAI}
\address{Graduate School of Mathematical Sciences, The University of Tokyo, 3-8-1 Komaba,
Tokyo, 153-8914, Japan.}
 \email{{\tt
masataka@ms.u-tokyo.ac.jp, masataka.math@gmail.com}}

\author{Shin-ichi MATSUMURA}
\address{Mathematical Institute, Tohoku University, 
6-3, Aramaki Aza-Aoba, Aoba-ku, Sendai 980-8578, Japan.}
\email{{\tt mshinichi-math@tohoku.ac.jp, mshinichi0@gmail.com}}

\date{\today, version 0.01}

\renewcommand{\subjclassname}{%
\textup{2010} Mathematics Subject Classification}
\subjclass[2010]{Primary 32J25, Secondary 14J26, 58A30.}

\keywords
{Singular hermitian metrics, 
Pseudo-effective vector bundles, 
Tangent bundles,
Rationally connected varieties, 
Abelian varieties,  
MRC fibrations, 
Numerically flat vector bundles, 
Splitting of vector bundles, 
Classification of surfaces.
}

\maketitle

\begin{abstract}
In this paper, we develop the theory of singular hermitian metrics on vector bundles. 
As an application, we give a structure theorem of a projective manifold $X$ with pseudo-effective tangent bundle: 
$X$ admits a smooth fibration $X \to Y$ to a flat projective manifold $Y$ 
such that its general fiber is rationally connected. 
Moreover, by applying this structure theorem, 
we classify all the minimal surfaces with pseudo-effective tangent bundle and study general non-minimal surfaces, 
which provide examples of (possibly singular) positively curved tangent bundles. 
\end{abstract}

\tableofcontents

\section{Introduction}\label{Sec-1}
The structure theorem for compact K\"ahler manifolds 
with semi-positive bisectional curvature was established by 
Howard-Smyth-Wu and Mok in \cite{HSW81} and \cite{Mok88}
after the Frankel conjecture (resp. the Hartshorne conjecture)  
had been solved by Siu-Yau (resp. Mori) in \cite{SY80} (resp. \cite{Mor79}). 
As an algebraic analog of semi-positive bisectional curvature,  
Campana-Peternell and Demailly-Peternell-Schneider 
generalized the structure theorem of Howard-Smyth-Wu to nef tangent bundles in \cite{CP91} and \cite{DPS94}, 
and further they classified the surfaces and the $3$-folds with nef tangent bundle.  
(see  \cite{CP91} and \cite{MOSWW} for the Campana-Peternell conjecture).

It is of interest to consider pseudo-effective tangent bundles as a natural generalization of the above structure results. 
The theory of singular hermitian metrics on vector bundles, which has been rapidly developed, 
is a crucial tool to understand pseudo-effective vector bundles. 
Therefore, in this paper, we first develop the theory of singular hermitian metrics on vector bundles  (more generally torsion free sheaves). 
As one of the main applications, 
we obtain the following structure theorem 
for projective manifolds with pseudo-effective tangent bundle 
(and also for compact K\"ahler manifolds, see Theorem \ref{theo-posi-Ka}).

\begin{theo}\label{theo-posi}
Let $X$ be a projective manifold with pseudo-effective tangent bundle. 
Then $X$ admits a $($surjective$)$ morphism $\phi: X \to Y$ with connected fiber to 
a smooth manifold $Y$ with the following properties\,$:$
\begin{itemize}
\item[(1)] The morphism $\phi: X \to Y$ is smooth $($that is, all the fibers are smooth$)$. 
\item[(2)] The image $Y$ admits a finite \'etale cover $A \to Y$ by an abelian variety $A$. 
\item[(3)] A general fiber $F$ of $\phi$ is rationally connected. 
\item[(4)] A general fiber $F$ of $\phi$ also has the pseudo-effective tangent bundle. 
\end{itemize}
Moreover, if we further assume that 
$T_X$ admits a positively curved singular hermitian metric, 
then we have$:$
\begin{itemize}
\item[(5)] The standard exact sequence of tangent bundles 
$$
0 \longrightarrow T_{X/Y} 
\longrightarrow T_X \longrightarrow \phi^* T_Y \longrightarrow 0
$$
splits. 
\item[(6)] The morphism $\phi: X \to Y$ is locally trivial $($that is, all the fibers are smooth and isomorphic$)$. 
\end{itemize}
\end{theo}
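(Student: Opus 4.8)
The plan is to take $\phi$ to be (a morphism model of) the maximal rationally connected (MRC) fibration $X \dashrightarrow Y$: by construction its general fiber is rationally connected, giving (3), and its base $Y$ is not uniruled. The guiding principle is the dichotomy forced by pseudo-effectivity of $T_X$ --- the fiber directions must be ``positive'' (rationally connected), while the non-uniruled base directions must be ``flat.'' As a preliminary I would record, using the singular-metric theory developed earlier in the paper, that pseudo-effectivity of $T_X$ yields pseudo-effectivity of $\det T_X = -K_X$.

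The heart of the argument is the analysis of $Y$. First I would establish a descent statement: the base again has pseudo-effective tangent bundle, and $-K_Y$ is pseudo-effective. Here the rationally connected fibers of the MRC fibration contribute positively, and I would deduce the full pseudo-effectivity of $T_Y$ from the quotient $T_X \to \phi^* T_Y$ over the smooth locus together with the singular-metric machinery. Since $Y$ is not uniruled, the theorem of Boucksom--Demailly--Paun--Peternell gives that $K_Y$ is pseudo-effective; combined with $-K_Y = \det T_Y$ pseudo-effective, this forces $K_Y \equiv 0$. The crucial upgrade is then to pass from ``$T_Y$ pseudo-effective with $\det T_Y \equiv 0$'' to ``$T_Y$ numerically flat,'' which is exactly where the theory enters: a positively curved singular metric on $T_Y$ with numerically trivial determinant must in fact be smooth and hermitian flat. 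Once $T_Y$ is numerically flat, the structure theory of numerically flat tangent bundles shows that $Y$ is covered by an abelian variety via a finite \'etale map, giving (2); in particular $Y$ is smooth.

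For the fibration itself I would represent $\phi$ by an honest morphism through a comparison with the Albanese map. Since rationally connected fibers carry no nonzero holomorphic $1$-forms, the Albanese map $X \to \Alb(X)$ is constant on the MRC fibers and factors as $X \to Y \to \Alb(X)$ with first map $\phi$; using that $Y$ is abelian-covered one identifies the MRC fibration with (the Stein factorization of) the Albanese map, which is everywhere defined. Its smoothness (1) I would then derive from the pseudo-effectivity of $T_X$, showing that the relative tangent sheaf $T_{X/Y}$ is a subbundle without jumping so that $\phi$ is a submersion with no singular fibers. Property (4) follows by restriction to a general fiber $F$: the restriction $T_X|_F$ is pseudo-effective, and since the normal bundle $N_{F/X} \cong \phi^* T_Y|_F \cong \OX_F^{\oplus \dim Y}$ is trivial, the subbundle $T_F$ inherits pseudo-effectivity.

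Finally, under the stronger hypothesis that $T_X$ carries a positively curved singular hermitian metric $h$, I would split the tangent sequence by taking the $h$-orthogonal complement of $T_{X/Y}$ in $T_X$, producing a $C^\infty$ splitting; the positive-curvature condition then forces the associated second fundamental form to vanish, so the splitting is holomorphic and (5) holds. Combining the resulting holomorphic inclusion $\phi^* T_Y \hookrightarrow T_X$ with the flatness of $Y$ yields a holomorphic connection transverse to the fibers, and an Ehresmann-type integration shows that all fibers are isomorphic, giving the local triviality (6). I expect the main obstacle to be the regularity step inside the base analysis --- showing that a merely positively curved singular metric on $T_Y$ with numerically trivial determinant is genuinely numerically flat --- since this is where the singular-metric theory must be pushed hardest; a secondary difficulty is verifying that the $C^\infty$ splitting produced by $h$ in (5) is truly holomorphic and not merely smooth.
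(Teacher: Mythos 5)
Much of your skeleton coincides with the paper's argument: the MRC fibration, the deduction that the base directions have $c_1=0$ (pseudo-effectivity of the quotient versus pseudo-effectivity of $K_Y$ from BDPP/GHS), the upgrade to numerical flatness via Theorem \ref{theo-vect}, Beauville--Bogomolov for (2), restriction plus triviality of the normal bundle for (4), and the splitting theorem plus Ehresmann for (5)--(6). The genuine gap is the step where you convert the rational MRC map into an honest smooth morphism by identifying it with (the Stein factorization of) the Albanese map of $X$. This identification is false in general, because conclusion (2) only asserts that $Y$ is finite \'etale covered by an abelian variety, so the irregularity $q(Y)$ may be strictly smaller than $\dim Y$. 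Concretely, let $S=A/G$ be a bielliptic surface (a free finite quotient of an abelian surface, with $q(S)=1$) and $X=\mathbb{P}^1\times S$. Then $T_X$ is nef, hence pseudo-effective, and the MRC fibration is the projection $X\to S$; but $\Alb(X)$ is an elliptic curve, so the Albanese map and its Stein factorization have one-dimensional image and cannot recover $\phi$. There is also a related ordering problem: before you have a morphism, ``$T_Y$ is pseudo-effective'' is not well posed, since the MRC base is defined only up to birational equivalence and the property is not birationally invariant (no nontrivial blow-up has numerically flat tangent bundle); descending pseudo-effectivity through the only generically defined quotient $T_X\to \phi^*T_Y$ onto a chosen compact model is exactly the delicate point your sketch assumes away.

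What the paper does instead---and what your argument is missing---is to stay on $X$: it sets $\mathcal{Q}:=(\pi_*\bar\phi^*\Omega_Y)^\vee$, a reflexive sheaf defined on all of $X$ that agrees with $\phi^*T_Y$ where $\phi$ is a smooth morphism. Generic surjectivity of $T_X\to\mathcal{Q}$ gives pseudo-effectivity of $\mathcal{Q}$, BDPP/GHS give $c_1(\mathcal{Q})=0$, and Theorem \ref{theo-vect} makes $\mathcal{Q}$ a numerically flat vector bundle on $X$; Lemma \ref{lemm-nonvanish} then shows $T_X\to\mathcal{Q}$ is a surjective bundle morphism everywhere, so its kernel $\mathcal{S}$ is an integrable subbundle of $T_X$ extending $T_{X/Y}$ whose general leaves are rationally connected. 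The ingredient that converts this foliation into a smooth holomorphic fibration is H\"oring's theorem \cite[Corollary 2.11]{Hor07}; nothing in your sketch plays this role, and pseudo-effectivity of $T_X$ alone does not give compactness of all leaves. Two smaller points: Theorem \ref{theo-vect} is proved via Bando--Siu admissible Hermitian--Einstein metrics, not by regularizing a positively curved metric on $T_Y$---pseudo-effectivity does \emph{not} provide such a metric on the bundle itself (the converse of arrow (1) in the paper's table fails); and for (5) your ``orthogonal complement / second fundamental form'' argument presupposes a smooth metric, whereas $h$ is singular---the paper instead proves the splitting (Theorem \ref{theo-split2}) by producing flat local frames of the quotient and exhibiting the extension class as an explicit \v{C}ech coboundary.
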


Theorem \ref{theo-posi} is based on the argument in \cite{Mat18} and 
the theory of singular hermitian metrics on vector bundles developed in this paper. 
In particular, Theorem \ref{theo-vect}, Theorem \ref{theo-split2}, 
and Theorem \ref{theo-split} play an important role in the proof. 
Theorem \ref{theo-vect}, which can be seen as a generalization of \cite{CM}, 
gives a characterization of numerically flat vector bundles in terms of pseudo-effectivity. 
The proof depends on the theory of admissible Hermitian-Einstein metrics in \cite{BS}. 
Theorem \ref{theo-split2} and Theorem \ref{theo-split} were proved in \cite{HPS18} 
under the stronger assumption of the minimal extension property. 
Our contribution is to remove this assumption, 
which enables us to use the notion of singular hermitian metrics flexibly.

\begin{theo}\label{theo-vect}
Let $X$ be a projective manifold and let $\mathcal{E}$ be a reflexive coherent sheaf on $X$. 
If $\mathcal{E}$ is pseudo-effective and the first Chen class $c_1(\mathcal{E})$ is zero, 
then $\mathcal{E}$ is locally free on $X$ and numerically flat. 
\end{theo}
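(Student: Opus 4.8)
The plan is to combine the stability forced by pseudo-effectivity with the existence of admissible Hermitian--Einstein metrics. Fix an ample class $H$ on $X$ and set $r=\rank\mathcal{E}$. First I would observe that $\mathcal{E}$ is $H$-semistable of slope zero. Indeed, since $c_1(\mathcal{E})=0$ we have $\mu_H(\mathcal{E})=0$, while every torsion-free quotient $\mathcal{Q}$ of $\mathcal{E}$ is again pseudo-effective by the functoriality of positively curved singular metrics developed earlier in the paper. As the determinant of a pseudo-effective sheaf is a pseudo-effective line bundle, $c_1(\mathcal{Q})\cdot H^{n-1}\ge 0$, hence $\mu_H(\mathcal{Q})\ge 0=\mu_H(\mathcal{E})$, which is precisely $H$-semistability. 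Passing to the graded object of a Jordan--H\"older filtration, I may therefore assume $\mathcal{E}$ is $H$-polystable with $c_1(\mathcal{E})=0$.

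By the theory of Bando--Siu \cite{BS}, the polystable reflexive sheaf $\mathcal{E}$ carries an admissible Hermitian--Einstein metric $h$ on its locally free locus, and the Einstein factor vanishes because the slope is zero, so $\sqrt{-1}\Lambda_\omega \Theta_h=0$. The next step is to show that the discriminant $\Delta(\mathcal{E})=(2r)\,c_2(\mathcal{E})-(r-1)c_1(\mathcal{E})^2$ satisfies $\Delta(\mathcal{E})\cdot H^{n-2}=0$. One inequality is the Bogomolov--Gieseker inequality $\Delta(\mathcal{E})\cdot H^{n-2}\ge 0$, valid for $H$-semistable sheaves via restriction to a general complete intersection surface $S$ in the sense of Mehta--Ramanathan, where $h$ yields the usual Weitzenb\"ock/L\"ubke estimate. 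The reverse inequality is where pseudo-effectivity must enter: writing $\xi=c_1(\OX_{\mathbb{P}(\mathcal{E})}(1))$, the vanishing $c_1(\mathcal{E})=0$ turns the relevant Segre number into $\int_{\mathbb{P}(\mathcal{E}|_S)} \xi^{\,r+1}=-\,c_2(\mathcal{E})\cdot H^{n-2}$ (in the standard Segre normalisation), and the pseudo-effectivity of $\xi$ must be leveraged, through the singular metric on $\OX_{\mathbb{P}(\mathcal{E})}(1)$, to conclude $c_2(\mathcal{E})\cdot H^{n-2}\le 0$. \emph{This nonnegativity of a top self-intersection of a merely pseudo-effective class is the main obstacle}: it is false for general pseudo-effective classes, and it genuinely requires the hypothesis $c_1=0$ together with the analytic input of the paper's singular hermitian metrics, rather than any formal positivity of nef-type classes.

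Once $\Delta(\mathcal{E})\cdot H^{n-2}=0$ is established, the equality case of the L\"ubke--Kobayashi inequality for the admissible metric $h$ forces the curvature $\sqrt{-1}\Theta_h$ to be projectively flat on the locally free locus; combined with $c_1(\mathcal{E})=0$, so that the induced metric on $\det\mathcal{E}$ is flat, this makes $\sqrt{-1}\Theta_h\equiv 0$, i.e.\ $h$ is a flat unitary metric. Finally I would extend this flat structure across the non-locally-free locus: since $\mathcal{E}$ is reflexive this locus has codimension at least three, so the fundamental group of its complement coincides with $\pi_1(X)$, and the Bando--Siu regularity and extension results \cite{BS} upgrade $\mathcal{E}$ to a genuine vector bundle underlying a unitary local system. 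Such a bundle is numerically flat by the Demailly--Peternell--Schneider characterization \cite{DPS94}, which completes the proof and recovers \cite{CM} as the special case in which $\mathcal{E}$ is already locally free.
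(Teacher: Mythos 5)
Your overall strategy (semistability forced by pseudo-effectivity, then Bando--Siu) is the same one the paper follows, but the proposal has two genuine gaps. The first is the reduction ``passing to the graded object of a Jordan--H\"older filtration, I may assume $\mathcal{E}$ is polystable.'' Pseudo-effectivity passes to quotients but \emph{not} to subsheaves, so the Jordan--H\"older graded pieces are not known to be pseudo-effective, and your slope argument gives no positivity for them. Worse, even if every graded piece were shown to be hermitian flat, $\mathcal{E}$ is not its graded object: you would still have to prove that the (possibly non-split) extensions assemble into a locally free, numerically flat sheaf, and nothing in the proposal does this. The paper handles precisely this point by induction on the rank: it takes a subsheaf $\mathcal{S}\subset\mathcal{E}$ of \emph{minimal} rank with $\int_X c_1(\mathcal{S})\cdot c_1(A)^{n-1}\geq 0$, proves $c_1(\mathcal{S})=c_1(\mathcal{Q})=0$, obtains pseudo-effectivity of $\mathcal{S}$ from the surjection $\Lambda^{m+1}\mathcal{E}\otimes\det\mathcal{Q}^{\vee}\to\mathcal{S}$ on $X_{\mathcal{E}}$ (this is the substitute for the missing ``subsheaves are psef'' claim, and it is where $c_1(\mathcal{Q})=0$ and Lemma \ref{lemm-nonvanish} are used), applies the induction hypothesis to $\mathcal{S}$ and $\mathcal{Q}^{\vee\vee}$, and finally extends the extension class from $X_{\mathcal{E}}$ to $X$ across the codimension $\geq 3$ locus via Lemma \ref{lemm-ele} to conclude that $\mathcal{E}$ itself is locally free.

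The second gap is the one you flag yourself: the inequality $c_2(\mathcal{E})\cdot H^{n-2}\leq 0$ is never proved, and, as you correctly observe, it cannot follow from formal positivity of the pseudo-effective class $\xi$, since top self-intersections of merely pseudo-effective classes can be negative. This is not a technical loose end but the heart of the stable case, so the proof does not close as written. The paper fills exactly this hole by a restriction argument: a general complete intersection surface $S=H_1\cap\dots\cap H_{n-2}$ avoids the non-locally-free locus (which has codimension $\geq 3$ by reflexivity), so $\mathcal{E}|_S$ is an honest pseudo-effective vector bundle with $c_1=0$ on a surface; the already-known bundle case of the statement (\cite{DPS94}, or \cite[Corollary 2.12]{CH17}, which applies even to almost nef bundles) then says $\mathcal{E}|_S$ is numerically flat, whence $c_2(\mathcal{E})\cdot c_1(A)^{n-2}=c_2(\mathcal{E}|_S)=0$. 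Only after this does \cite[Corollary 3]{BS} apply, in the stable case, to give hermitian flatness, and the extension across the singular locus proceeds as you describe. So your plan needs both the restriction-to-a-surface input (or a replacement for it) and an actual treatment of the strictly semistable case before it becomes a proof.
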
 

\begin{theo}\label{theo-split2}
Let $E$ be a vector bundle with positively curved $($singular$)$ hermitian metric 
on a $($not necessarily compact$)$ complex manifold $X$. 
Let 
$$
0 \to S \to E \to Q \to 0 
$$
be an exact sequence by vector bundles $S$ and $Q$ on $X$. 
If the first Chern class $c_{1}(Q)$ is zero, 
the above exact sequence splits. 
\end{theo}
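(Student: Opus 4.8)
The plan is to reduce the splitting to the vanishing of the second fundamental form and to extract that vanishing from the curvature of the quotient together with the hypothesis $c_{1}(Q)=0$. Using the given metric on $E$, one has a $C^{\infty}$-orthogonal decomposition $E = S \oplus S^{\perp}$ identifying $Q \cong S^{\perp}$, and the sequence splits holomorphically exactly when the orthogonal projection $E \to S$ is holomorphic, that is, when the second fundamental form $\beta \in A^{1,0}(X,\Hom(S,Q))$ of $S$ in $E$ vanishes identically. Thus the entire problem is to prove $\beta \equiv 0$.

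First I would endow $Q$ with the quotient metric $h_{Q}$ induced by the positively curved metric on $E$. Since a quotient of a positively curved (singular) Hermitian bundle is again positively curved, $h_{Q}$ is a positively curved singular Hermitian metric, and $\det h_{Q}$ is a singular metric on $\det Q$ with positive curvature current. (Equivalently, dualizing the sequence to $0 \to Q^{*} \to E^{*} \to S^{*} \to 0$ and restricting the negatively curved metric $h^{*}$ to the subbundle $Q^{*}$ records the same data with reversed signs.) The key is the Griffiths curvature identity for the quotient, which in the smooth case reads
$$\sqrt{-1}\,\Theta_{h_{Q}}(Q) = \pi_{Q}\bigl(\sqrt{-1}\,\Theta_{h}(E)\bigr) + \sqrt{-1}\,\beta \wedge \beta^{*},$$
where $\pi_{Q}$ denotes the orthogonal projection of the curvature onto $Q$. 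Taking the trace over $Q$ gives
$$\sqrt{-1}\,\Theta_{\det h_{Q}}(\det Q) = \tr_{Q}\pi_{Q}\bigl(\sqrt{-1}\,\Theta_{h}(E)\bigr) + \tau,$$
where $\tau := \tr_{Q}\bigl(\sqrt{-1}\,\beta\wedge\beta^{*}\bigr)$ is a semi-positive $(1,1)$-form vanishing precisely where $\beta$ does, and the first term is semi-positive because $E$ is positively curved.

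Hence $\sqrt{-1}\,\Theta_{\det h_{Q}}(\det Q)$ is a positive current representing $c_{1}(Q)=0$, and I would conclude that it must vanish. In the compact case this is immediate: pairing with $\omega^{n-1}$ for a Hermitian form $\omega$, the relation $0 = c_{1}(Q)\cdot[\omega]^{n-1}$ splits as a sum of the two non-negative integrals coming from the two terms above, so both vanish; in particular $\tau \equiv 0$, whence $\beta \equiv 0$ and the $C^{\infty}$-orthogonal decomposition is the desired holomorphic splitting.

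The main obstacle is that $h$ is only a singular metric, so $\beta$ and the pointwise identities above are not classically defined and the curvatures exist only as currents; this is exactly where the theory of singular Hermitian metrics developed earlier in the paper is needed, and where the improvement over \cite{HPS18} (which assumed the minimal extension property) lies. To make the argument rigorous I would approximate $h$ by smooth positively curved metrics, apply the smooth curvature identity on the locus where everything is regular, and pass to the limit in the sense of currents, checking that $\det h_{Q}$ carries the class $c_{1}(Q)$ and that the second-fundamental-form term converges. The most delicate point is the passage from \emph{positive curvature current with vanishing first Chern class} to actual vanishing of the current: for non-compact $X$ one cannot simply integrate against $\omega^{n-1}$, and the conclusion must instead be drawn from the local structure of positively curved singular metrics supplied by the machinery of this paper. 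Once $\beta$ is shown to vanish off a negligible set, a Hartogs-type removable-singularity argument upgrades the orthogonal decomposition to a genuine holomorphic splitting homomorphism $Q \to E$ on all of $X$.
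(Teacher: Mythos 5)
Your reduction to the vanishing of the second fundamental form $\beta$ is the right picture for \emph{smooth} metrics, but the two devices carrying all the weight in your proposal fail, and they fail at exactly the points this theorem is meant to address. First, the regularization step: there is no theorem letting you approximate a positively curved \emph{singular} hermitian metric on a vector bundle by smooth positively curved (or almost positively curved) metrics; the absence of such a regularization is the central difficulty of the whole theory, and it is precisely why \cite{HPS18} had to assume the minimal extension property --- the assumption this theorem is designed to remove. Worse, for a general positively curved singular metric $h$ the curvature of $E$ need not exist even as a current (Raufi's theorem, Lemma \ref{lemm-Raufi}, requires $\det h$ to be non-singular, and without that hypothesis there are counterexamples), so $\beta$, the identity $\sqrt{-1}\Theta_{h_Q} = \pi_Q\bigl(\sqrt{-1}\Theta_h\bigr) + \sqrt{-1}\beta\wedge\beta^*$, and any limiting procedure for them are simply not defined objects in the stated generality. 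Second, the global step: your inference ``a positive current representing $c_1(Q)=0$ must vanish'' is false on a non-compact manifold ($\deldel |z|^2$ on $\mathbb{C}$ represents the zero class), and the non-compact case is not a luxury here --- the paper applies this very theorem on the non-compact Zariski open set $X_{\mathcal{E}}$ in the proof of Theorem \ref{theo-split}. Your closing appeal to ``the local structure of positively curved singular metrics'' is not an argument; it is the missing proof.

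The paper's actual proof is engineered to avoid both problems. It dualizes to $0 \to Q^\vee \to E^\vee \to S^\vee \to 0$ and observes that the restricted metric on the subbundle $Q^\vee$ is negatively curved with vanishing first Chern class, hence hermitian \emph{flat} by (the dual of) Lemma \ref{lem-vector}; the point is that $c_1=0$ first forces the induced metric on the determinant to be non-singular, which is what makes Raufi's theorem legitimately applicable to $Q^\vee$ --- the curvature of $E$ itself is never used. Flatness yields local holomorphic frames of $Q^\vee$ with constant inner products (Lemma \ref{lem-flat}); then \cite[Proposition 5.2]{Hos17} supplies the key fact that the pairings of arbitrary holomorphic frame vectors of $E^\vee$ with this flat frame are \emph{holomorphic} functions, a statement valid for singular negatively curved metrics with no approximation whatsoever. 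These holomorphic blocks of the Gram matrix are then assembled into an explicit \v{C}ech $0$-cochain whose coboundary is the extension cocycle, so the extension class in $H^1(X, \Hom(S^\vee, Q^\vee))$ vanishes. Everything is local and algebraic, which is why the argument survives both the singularity of $h$ and the non-compactness of $X$. To salvage your approach you would have to replace the approximation-plus-integration scheme by such a local mechanism; as written, the proposal's two key steps are unavailable.
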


\begin{theo}\label{theo-split}
Let $X$ be a  compact K\"ahler manifold and 
let 
$$
0 \to \mathcal{S} \to \mathcal{E} \to \mathcal{Q} \to 0 
$$
be an exact sequence of reflexive coherent sheaves $\mathcal{S}$, $\mathcal{E}$, and $\mathcal{Q}$ on $X$. 
If $ \mathcal{E}$ admits a positively curved $($singular$)$ hermitian metric 
and the first Chen class $c_1(\mathcal{Q} )=0$, 
then we have:
\begin{itemize}
\item[(1)] $\mathcal{Q} $ is locally free and hermitian flat.  
\item[(2)] $\mathcal{E} \to \mathcal{Q}$ is a surjective bundle morphism on $X_\mathcal{E}$. 
\item[(3)] The above exact sequence splits on $X$. 
\end{itemize}
Here $X_\mathcal{E}$ is the maximal Zariski open set where $\mathcal{E}$ is locally free. 
\end{theo}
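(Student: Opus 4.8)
The plan is to reduce the reflexive-sheaf statement to the vector-bundle statement of Theorem \ref{theo-split2} by working on the locally free locus $X_\mathcal{E}$ and then extending the resulting splitting across the bad locus using reflexivity. I would treat the three assertions in the order (1), (2), (3), since each feeds the next. For (1), observe that a positively curved singular hermitian metric on $\mathcal{E}$ induces such a metric on the quotient via the quotient-metric construction set up earlier in the paper, so $\mathcal{Q}$ is a pseudo-effective reflexive sheaf. Together with the hypothesis $c_1(\mathcal{Q})=0$, Theorem \ref{theo-vect} shows that $\mathcal{Q}$ is locally free and numerically flat. A numerically flat vector bundle on a compact K\"ahler manifold is hermitian flat (its Griffiths-semipositive curvature has trace representing $c_1(\mathcal{Q})=0$ and hence vanishes; cf. \cite{DPS94}), which gives (1).

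For (2), I would use that $\mathcal{Q}$ is now locally free everywhere and $\mathcal{E}$ is locally free on $X_\mathcal{E}$, so $\mathcal{E}\to\mathcal{Q}$ is a morphism of vector bundles there. Tensoring the sheaf surjection with the residue field $k(x)$ and invoking right-exactness shows that $\mathcal{E}\otimes k(x)\to\mathcal{Q}\otimes k(x)$ is surjective for every $x\in X_\mathcal{E}$; thus $\mathcal{E}\to\mathcal{Q}$ is a surjective bundle morphism of constant rank on $X_\mathcal{E}$, and its kernel $\mathcal{S}|_{X_\mathcal{E}}$ is a subbundle. This produces the exact sequence of vector bundles
$$
0 \to \mathcal{S}|_{X_\mathcal{E}} \to \mathcal{E}|_{X_\mathcal{E}} \to \mathcal{Q}|_{X_\mathcal{E}} \to 0,
$$
which is exactly the input needed for Theorem \ref{theo-split2}.

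For (3), since $\mathcal{E}|_{X_\mathcal{E}}$ carries a positively curved singular hermitian metric and $c_1(\mathcal{Q}|_{X_\mathcal{E}})=0$, Theorem \ref{theo-split2}—stated for possibly non-compact manifolds precisely to cover $X_\mathcal{E}$—provides a splitting $\sigma : \mathcal{Q}|_{X_\mathcal{E}} \to \mathcal{E}|_{X_\mathcal{E}}$. To globalize, I view $\sigma$ as a section of $\mathcal{H}om(\mathcal{Q},\mathcal{E})\cong \mathcal{Q}^\vee\otimes\mathcal{E}$ over $X_\mathcal{E}$; because $\mathcal{Q}$ is locally free and $\mathcal{E}$ is reflexive, this sheaf is reflexive. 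The complement $X\setminus X_\mathcal{E}$ is the non-locally-free locus of the reflexive sheaf $\mathcal{E}$, hence has codimension $\geq 2$, so the Hartogs-type extension property for reflexive sheaves extends $\sigma$ to a global morphism $\sigma:\mathcal{Q}\to\mathcal{E}$ on $X$. The composite $\mathcal{Q}\xrightarrow{\sigma}\mathcal{E}\to\mathcal{Q}$ equals the identity on the dense open set $X_\mathcal{E}$, and since $\mathcal{Q}$ is torsion free it equals the identity on all of $X$; therefore $\sigma$ splits the sequence globally, giving (3).

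The hard part will be (1): arranging that the positivity of the singular hermitian metric on $\mathcal{E}$ genuinely descends to a positively curved metric on the quotient $\mathcal{Q}$, so that Theorem \ref{theo-vect} applies to a merely reflexive (not a priori locally free) sheaf. This rests entirely on the sheaf-theoretic foundations of singular hermitian metrics developed earlier in the paper, and is the place where the removal of the minimal extension property really enters. Once (1) is in hand, steps (2) and (3) are formal consequences of right-exactness, Theorem \ref{theo-split2}, and the codimension estimate for the singular locus of a reflexive sheaf.
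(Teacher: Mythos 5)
Your steps (2) and (3) are essentially sound and close to the paper's own argument: the paper likewise applies Theorem \ref{theo-split2} on $X_\mathcal{E}$ and then extends the splitting across $X \setminus X_\mathcal{E}$ (it pushes forward by $i: X_\mathcal{E} \hookrightarrow X$ and takes double duals, which is the same Hartogs-type use of reflexivity as your extension of $\sigma$ as a section of the reflexive sheaf $\mathcal{Q}^{\vee}\otimes\mathcal{E}$). Your argument for (2), via right-exactness of $\otimes\, k(x)$, is even simpler than the paper's appeal to Lemma \ref{lemm-nonvanish} and works fine once (1) is known. The problem is step (1), which you correctly identify as the crux, and it contains two genuine gaps. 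First, Theorem \ref{theo-vect} is stated and proved only for \emph{projective} $X$ --- its proof uses an ample polarization, subsheaves of minimal rank with nonnegative degree, and complete-intersection curves and surfaces --- whereas the present theorem is asserted for compact K\"ahler $X$, so you cannot invoke it here. Second, and independently, the implication ``numerically flat $\Rightarrow$ hermitian flat'' that you use to finish (1) is false: by \cite{DPS94} a numerically flat bundle only admits a filtration whose graded pieces are hermitian flat, and it need not itself carry any flat metric (e.g.\ the non-split extension of $\mathcal{O}_C$ by $\mathcal{O}_C$ on an elliptic curve is numerically flat but not polystable, hence not hermitian flat). Your parenthetical justification conflates nefness with the existence of a smooth Griffiths-semipositive metric, which nef bundles do not possess in general.

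The paper avoids both problems by working with the metric itself rather than with numerical positivity. The quotient metric $g_\mathcal{Q}$ induced on $\mathcal{Q}$ by the positively curved metric on $\mathcal{E}$ is positively curved, and since $c_1(\mathcal{Q})=0$ the induced metric on $\det\mathcal{Q}$ has vanishing curvature, hence is non-singular; Raufi's theorem (Lemma \ref{lemm-Raufi}) then makes the full curvature of $g_\mathcal{Q}$ a well-defined $\End(\mathcal{Q})$-valued current, and Griffiths positivity together with vanishing trace forces this current to vanish, so $(\mathcal{Q}, g_\mathcal{Q})$ is hermitian flat on $X_\mathcal{Q}$ (Lemma \ref{lem-vector} (1)). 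Local freeness on all of $X$ then comes from Bando--Siu: flatness gives polystability and $c_1=c_2=0$, and \cite[Theorem 4]{BS} yields that $\mathcal{Q}$ is locally free and hermitian flat. This is exactly where the K\"ahler hypothesis enters and what replaces your appeal to the projective Theorem \ref{theo-vect}; with Lemma \ref{lem-vector} in place of your step (1), the rest of your proposal goes through.
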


It is natural to attempt to classify all the surfaces $X$ with pseudo-effective tangent bundle, 
as an application of Theorem \ref{theo-posi}. 
In the case of the tangent bundle being nef, 
a surface $X$ has no curve with negative self-intersection, 
and thus $X$ is always minimal. 
However, a surface $X$ with pseudo-effective tangent bundle may not be minimal, 
which is one of the difficulties to classify them. 
In this paper, we classify all the minimal surfaces (see subsection \ref{Sec4-1} for more detail): 
\begin{theo}\label{theo-mini}We have$:$
\begin{itemize}
\item[(1)] If a $($not necessarily minimal$)$ ruled surface $X \to C$ has the pseudo-effective tangent bundle $T_X$, 
then the base $C$ is the projective line $\mathbb{P}^1$ or an elliptic curve.
\item[(2)] Further, in the case of $C$ being an elliptic curve, 
the surface $X$ is  a minimal ruled surface $($that is, the ruling $X \to C$ is a smooth morphism$)$. 
\item[(3)] Conversely, any minimal ruled surfaces $X \to C$ 
over an elliptic curve and over the projective line $C=\mathbb{P}^1$ 
have the pseudo-effective tangent bundle $T_X$. 
\end{itemize}
\end{theo}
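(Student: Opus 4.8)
The plan is to deduce (1) and (2) from the structure theorem (Theorem~\ref{theo-posi}) together with elementary surface theory, and to prove the converse (3) by establishing pseudo-effectivity of $T_X$ directly on the short list of minimal geometrically ruled surfaces.

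For (1) and (2), I would first apply Theorem~\ref{theo-posi} to $X$, whose tangent bundle is pseudo-effective by hypothesis, obtaining a smooth fibration $\phi\colon X\to Y$ with rationally connected general fibre and with $Y$ admitting a finite \'etale cover by an abelian variety. I then split according to $\dim Y\in\{0,1,2\}$. If $\dim Y=2$, then $\phi$ is an isomorphism and $X$ is \'etale-covered by an abelian surface, forcing $\kappa(X)\ge 0$; this contradicts $X$ being ruled, so this case does not occur. If $\dim Y=0$, then $X$ itself is rationally connected, hence rational, so $q(X)=0$ and therefore $g(C)=q(X)=0$, i.e. $C=\mathbb{P}^1$. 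If $\dim Y=1$, then $Y$ is a curve carrying an elliptic \'etale cover, hence elliptic, and the smooth fibres of $\phi$ are rational, so $\phi$ is a $\mathbb{P}^1$-bundle over an elliptic curve; comparing irregularities gives $g(C)=q(X)=g(Y)=1$, so $C$ is elliptic. This proves (1). For (2), if $C$ is elliptic then $q(X)=1$, which excludes the cases $\dim Y=0$ and $\dim Y=2$, so again $\phi$ is a $\mathbb{P}^1$-bundle over an elliptic curve; in particular $X$ carries no $(-1)$-curve and is a minimal surface. The given ruling $\pi\colon X\to C$ is then relatively minimal, and a relatively minimal ruling is a $\mathbb{P}^1$-bundle, so $\pi$ is a smooth morphism, as claimed.

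For (3), I would run through the classification of minimal geometrically ruled surfaces: over $\mathbb{P}^1$ these are the Hirzebruch surfaces $\mathbb{F}_n$ with $n\ne 1$, and over an elliptic curve they are the surfaces $\mathbb{P}(V)$ with $V$ in Atiyah's list. In every case $T_X$ is pseudo-effective iff the tautological class $\xi=c_1(\mathcal{O}_{\mathbb{P}(T_X)}(1))$ is pseudo-effective on $\mathbb{P}(T_X)$. When the defining bundle $V$ is semistable, I claim $T_X$ is in fact nef: the relative tangent bundle $T_{X/C}\cong\mathcal{O}_{\mathbb{P}(V)}(2)\otimes\pi^*(\det V)^{-1}$ is the twofold tautological bundle of a numerically trivial semistable, hence nef, bundle and is therefore nef, while $\pi^*T_C$ is nef (trivial over an elliptic base, ample over $\mathbb{P}^1$); since $T_X$ is an extension of nef bundles it is nef, hence pseudo-effective. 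This covers $\mathbb{P}^1\times\mathbb{P}^1$ and all elliptic ruled surfaces with semistable $V$. When $V$ is unstable, namely $\mathbb{F}_n$ with $n\ge 2$ and the decomposable elliptic surfaces $\mathbb{P}(\mathcal{O}\oplus L)$ with $\deg L>0$, the nef argument is unavailable and I argue pseudo-effectivity by producing sections: the inclusion $T_{X/C}^{\otimes m}\hookrightarrow S^m T_X$ of the bottom piece of the symmetric power, combined with $\pi_*T_{X/C}^{\otimes m}\cong S^{2m}V\otimes(\det V)^{-m}$, reduces matters to the existence of a section of $S^{2m}V\otimes(\det V)^{-m}$ on $C$; in these unstable cases this bundle contains a summand of non-negative degree (for instance the trivial summand), so $h^0(X,S^m T_X)>0$, whence $m\xi$ is effective and $\xi$ is pseudo-effective. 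Effectivity of $m\xi$ moreover yields a singular metric on $T_X$ with semi-positive curvature, producing the positively curved examples advertised in the abstract.

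The main obstacle is the converse (3), and specifically the unstable surfaces such as $\mathbb{F}_n$ with $n\ge 2$, where $T_X$ is provably not nef, since its restriction to the negative section surjects onto a negative line bundle and quotients of nef bundles are nef, so pseudo-effectivity must be established genuinely rather than inferred from nef-ness. The delicate point is that naive section-counting is not uniformly available: for the odd-degree indecomposable elliptic ruled surface the relevant pushforward $S^{2m}V\otimes(\det V)^{-m}$ is a sum of degree-zero line bundles that need not contain $\mathcal{O}$, since self-duality only forces a $2$-torsion summand, so there one must fall back on semistability and the nef argument. Organizing the case analysis so that every surface is covered by exactly one of the two mechanisms, namely nef-ness for semistable $V$ and effectivity of a symmetric power for unstable $V$, is the crux of the proof.
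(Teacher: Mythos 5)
Your treatment of (1) and (2) is fine and is essentially the paper's: both follow from Theorem \ref{theo-posi} plus elementary surface theory. The genuine gap is in (3), and it sits exactly at the point you dismiss in one clause: the claim that ``$T_X$ is pseudo-effective iff the tautological class $\xi=c_1(\mathcal{O}_{\mathbb{P}(T_X)}(1))$ is pseudo-effective'' is false for the notion of pseudo-effectivity used in this theorem (Definition \ref{defi-pseudo}, equivalently weak positivity at some point). Pseudo-effectivity of $\mathcal{O}_{\mathbb{P}(E)}(1)$ is strictly weaker: one must also control where its non-nef locus sits over $X$. A minimal counterexample is $E=\mathcal{O}(-1)\oplus\mathcal{O}(1)$ on $\mathbb{P}^1$: here $\xi$ is even effective, yet $\Sym^{ab}(E)\otimes A^{b}$ always contains a negative-degree summand, so $E$ is weakly positive at no point. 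For the same reason, your section-producing mechanism in the unstable cases ($S_n$ with $n\geq 1$, and $\mathbb{F}_n$) proves too little: the sections you construct all come from the sub-line bundle $T_{X/C}^{\otimes m}\subset \Sym^m T_X$, hence span only a line in each fiber of $\Sym^m T_X$; they give $h^0(X,\Sym^m T_X)>0$ and thus effectivity of $m\xi$, but not generic global generation, and not pseudo-effectivity of the bundle $T_X$. This is precisely the subtlety the paper isolates in Problem \ref{prob-ps} and Remark \ref{rem-almost}: the authors explicitly say that arguing through the extension $0\to T_{X/C}\to T_X\to \phi^*T_C\to 0$ with pseudo-effective sub and quotient is something they tried and could not make work, and that the gap between pseudo-effectivity of $\mathcal{O}_{E}(1)$ (or almost nefness) and pseudo-effectivity of $E$ is open.

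What the paper does instead is strictly stronger section-building. Over $\mathbb{P}^1$ it uses the toric structure: the vector fields generated by the torus action generate $T_{\mathbb{F}_n}$ over the open orbit, so $T_X$ itself is generically globally generated (Proposition \ref{prop-toric}). Over an elliptic curve, for $X=S_n=\mathbb{P}(\mathcal{O}_C\oplus\mathcal{O}_C(np))$ with $n\geq 1$, it proves that $\Sym^m(T_X)\otimes\phi^*\mathcal{O}_C(2p)$ is generically globally generated for \emph{every} $m$, by constructing $m+1$ sections $\theta_0,\dots,\theta_m$ whose leading coefficients are nonzero in each of the mixed directions $(\partial/\partial\eta)^{m-\ell}(\partial/\partial v)^{\ell}$; the coefficients are assembled from Weierstrass-type meromorphic functions $P_k$ with prescribed pole orders at $p$ so that the gluing across the fiber $\phi^{-1}(p)$ produces poles of order at most $2$ (Proposition \ref{prop-ell}). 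Producing sections transverse to the relative direction is the entire difficulty, and your proposal has no mechanism for it. Finally, your closing claim that effectivity of $m\xi$ ``yields a singular metric on $T_X$ with semi-positive curvature'' contradicts the paper itself: Proposition \ref{prop-ell}\,(1) shows that $T_{S_n}$ for $n\geq 1$ is pseudo-effective but admits \emph{no} positively curved singular hermitian metric (by the splitting theorem, Theorem \ref{theo-split}, such a metric would force $h^0(X,T_X)=h^0(X,T_{X/C})+h^0(X,\phi^*T_C)$, which fails numerically). Effectivity of $m\xi$ only metrizes the line bundle $\mathcal{O}_{\mathbb{P}(T_X)}(m)$, not the vector bundle $T_X$.
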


Moreover, we study the remaining problem (that is, the classification for blow-ups of Hirzebruch surfaces) in detail. 
These studies provide interesting examples of pseudo-effective or singular positively curved vector bundles.

\subsection*{Acknowledgements}
G.H. is supported by the  Grant-in-Aid for JSPS Fellows $\sharp$19J00473.
M.I. is  supported by the  Grant-in-Aid for JSPS Fellows $\sharp$17J04457
and by the Program for Leading Graduate Schools, MEXT, Japan. 
S.M. is supported by the Grant-in-Aid 
for Young Scientists (A) $\sharp$17H04821 from JSPS.

\section{Preliminaries}\label{Sec-2}
\subsection{Singular hermitian metrics on torsion free sheaves}\label{Sec2-1}
In this subsection, we recall the notion of singular hermitian metrics and positivity of vector bundles. 
Throughout this paper, we adopt the definition of singular hermitian metrics in \cite{HPS18}.

Let $\mathcal{E}$ be a torsion free sheaf on a complex manifold $X$, 
and further let $X_{\mathcal{E}}$ denote the maximal Zariski open set where  
$\mathcal{E}$ is locally free. 
In this paper, 
we denote ${{\Hom}}(\mathcal{E}, \mathcal{O}_X)$ by $\mathcal{E}^\vee$.
A \textit{singular hermitian metric} $g$ on $\mathcal{E}$ is 
a singular hermitian metric defined on the vector bundle $\mathcal{E}|_{ X_{\mathcal{E}}}$ 
(see \cite[Definition 16.1]{HPS18}), 
and it is said to be \textit{positively curved} if 
$\log |u|_{g^{\vee}}$ is a psh function 
for any local section $u$ of $\mathcal{E}|_{ X_{\mathcal{E}} }^\vee$, 
where $g^{\vee}$ is the dual metric defined by $g^{\vee}  = {}^t\! g^{-1}$. 

\begin{defi}\label{defi-pseudo}
A torsion free coherent sheaf $\mathcal{E}$ on a compact complex manifold $X$ 
is said to be \textit{pseudo-effective} if for any integer $m>0$ there exists 
a singular hermitian metric $h_m$ on $\Sym ^m \mathcal{E}$ such that 
$$
\deldel \log |u|^2_{h_m^{\vee}} \geq - \omega  \text{ on } X_{\mathcal{E}}
$$
for any local holomorphic section $u$ of $\Sym^m\mathcal{E}$. Here $\omega$ is a fixed hermitian form on $X$. 
\end{defi}

We summarize the notions of positivity of vector bundles and torsion free coherent sheaves.
The above definition is equivalent to the definition (5) below 
when $X$ is a projective manifold (see \cite[Theorem 1.3]{Iwa}).

\begin{defi}[{\cite[Definition 7.1]{BDPP},\cite[Definition 1.17]{DPS94}, \cite[Definition 6.4]{DPS01}, \cite[Definition 3.20]{Nak04}}]

Let $X$ be a projective manifold.
 \begin{enumerate}
 \item A vector bundle $E$ is {\it nef} if $\mathcal{O}_{\mathbb{P}(E) }(1)$ is a nef line bundle on $\mathbb{P}(E) $.
 \item A vector bundle $E$ is {\it numerically flat} if $E$ is nef and $c_{1}(E) = 0$.
  \item A vector bundle $E$ is {\it almost nef} if there exists a countable family  of proper subvarieties $Z_{i}$ of $X$ 
  such that $E |_{C} $ is nef for any curve $ C \not \subset \cup_{i} Z_i$.
 \item A torsion free coherent sheaf $\mathcal{E}$ is {\it weakly positive at $x \in X$} if, 
 for any $a \in \mathbb{Z}_{+}$ and for any ample line bundle $A$ on $X$, there exists 
$b \in \mathbb{Z}_{+}$ such that $\Sym^{ab}( \mathcal{E}  ) ^{\vee \vee} \otimes A^{ b}$ is globally generated at $x$, where $\Sym^{ab}( \mathcal{E}  ) ^{\vee \vee}$
is the double dual of $ab$-th symmetric power of $ \mathcal{E} $.
 \item A torsion free coherent sheaf $\mathcal{E}$ is {\it pseudo-effective} if  $ \mathcal{E}  $ is weakly positive at some $x \in X$.
 \item A torsion free coherent sheaf $\mathcal{E}$ is {\it big}  if there exist $a \in \mathbb{Z}_{+}$ and 
 an ample line bundle $A$ on $X$ such that $\Sym^{a}( \mathcal{E}  ) ^{\vee \vee} \otimes A^{-1}$ is pseudo-effective.
 \item A torsion free coherent sheaf $\mathcal{E}$ is {\it generically globally generated} if $\mathcal{E}$ is globally generated 
 at a general point in $X$.
 \end{enumerate}
 \end{defi}
The definition of nef (resp. big,  or pseudo-effective) vector bundles coincides with 
the usual one in the case $E$ being a line bundle. 
Relationships among them can be summarized  by the following table: 

\begin{equation*}
\xymatrix@C=40pt@R=30pt{
  \txt{numerically flat} \ar@{=>}[r]& \txt{nef} \ar@{=>}[d] & \txt{big} \ar@{=>}[ld] \\ 
 \txt{\textit{E} has a positively curved \\ singular hermitian metric} \ar@{=>}[r]^{\ \ \ \ \ \  (1) }  &  \txt{pseudo-effective} \ar@{=>}[r]^{\ \ \ \  (2) }& \txt{almost nef}\\   
  \txt{\textit{E} is generically \\ globally generated} \ar@{=>}[u]  &\txt{  $ \Sym^{m}( E )$ is generically globally \\ generated  for some $m \in \mathbb{Z}_{+}$} \ar@{=>}[u]&\\
}
\end{equation*}

When $E$ is a line bundle, the converses of (1) and (2) hold (see \cite[Section 6]{Dem} and \cite[Theorem 0.2]{BDPP}). 
However, in higher rank case, the converse of (1) is not always true 
(see \cite[Example 5.4]{Hos17}) and the converse of (2) is unknown.

\section{Proof of the main results}\label{Sec-3}
This section is devoted to the proof of the main results. 

\subsection{Numerically flat vector bundles}\label{Sec-3-0}
In this subsection, we give a proof for Theorem \ref{theo-vect} 
after we prove Lemma \ref{lemm-nonvanish} and Lemma \ref{lemm-ele} for preparation. 
Lemma \ref{lemm-nonvanish}, which easily follows from the result of \cite[Proposition 1.16]{DPS94}, 
is quite useful and often used in this paper. 

\begin{lemm}\label{lemm-nonvanish}
Let $X$ be a projective manifold and 
let $\mathcal{E}$ be an almost nef torsion free coherent sheaf on $X$. 

\begin{itemize}
\item[(1)] Any non-zero section $\tau \in H^{0}(X, \mathcal{E}^{\vee})$ is 
non-vanishing on $X_\mathcal{E}$. 

\item[(2)] Let $\mathcal{S}$ be a reflexive coherent sheaf such that $\det \mathcal{S}$ is pseudo-effective and let $0 \to \mathcal{S} \to \mathcal{E}^\vee$ be an injective sheaf morphism. 
Then $\mathcal{S}$ is locally free on $X_\mathcal{E}$ and 
the above morphism is an injective bundle morphism on $X_\mathcal{E}$. 
\end{itemize}
\end{lemm}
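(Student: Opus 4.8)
The plan is to reduce both statements to the behaviour of $\mathcal{E}$ along a covering family of movable curves, together with the elementary observation that a nonzero section of the dual of a nef bundle on a smooth curve has no zeros. Throughout I enlarge the countable family $\{Z_i\}$ so that it contains $X\setminus X_\mathcal{E}$ (a proper subvariety, since a torsion free sheaf is locally free in codimension one, so its non-free locus has codimension $\ge 2$); then for every curve $C\not\subset\bigcup_i Z_i$ meeting $X_\mathcal{E}$ the restriction $\mathcal{E}|_C$ is a nef vector bundle. I will use complete intersection curves cut out by general very ample divisors: these are movable, they cover $X$, and, since an irreducible curve contained in a countable union of proper subvarieties must lie in a single one of them, a very general such curve (through a prescribed point, if needed) is not contained in $\bigcup_i Z_i$. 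This is exactly the mechanism of \cite[Proposition 1.16]{DPS94}, which the lemma is meant to package.

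For (1), I first record the curve lemma: if $V$ is nef on a smooth projective curve $C$ and $t\in H^0(C,V^\vee)$ is nonzero, then $t$ is nowhere vanishing. Indeed $t$ saturates to a line subbundle $L\subseteq V^\vee$ with $\deg L\ge 0$; since every quotient of the nef bundle $V$ has nonnegative degree, dually $\deg L\le 0$, so $\deg L=0$ and the inclusion $\mathcal{O}_C\hookrightarrow L$ is an isomorphism. Now suppose $\tau$ vanished at some $x_0\in X_\mathcal{E}$. Choosing a general complete intersection curve $C$ through $x_0$ that is not contained in any $Z_i$ and with $\tau|_C\not\equiv 0$ (possible since the non-vanishing locus of $\tau$ is dense open), the restriction $\tau|_C$ is a nonzero section of $(\mathcal{E}|_C)^\vee$ that vanishes at $x_0$, contradicting the curve lemma applied to the nef bundle $\mathcal{E}|_C$.

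For (2), set $s=\rank\mathcal{S}$ and pass to determinants. The injection $\iota$ induces a nonzero map $\det\mathcal{S}\to\bigwedge^s\mathcal{E}^\vee\cong(\bigwedge^s\mathcal{E})^\vee$, that is, a section $\tilde\sigma$ of $\mathcal{G}^\vee$ where $\mathcal{G}:=(\bigwedge^s\mathcal{E})\otimes\det\mathcal{S}$; since $X\setminus X_\mathcal{E}$ has codimension $\ge 2$ and $\mathcal{G}^\vee$ is reflexive, $\tilde\sigma$ extends to a global section, and $\det\mathcal{S}$ is a genuine line bundle because $\mathcal{S}$ is reflexive. The point is that $\mathcal{G}$ restricts to a nef bundle on a general movable good curve $C$: indeed $\bigwedge^s\mathcal{E}|_C$ is nef (a wedge power of the nef bundle $\mathcal{E}|_C$) and $\deg(\det\mathcal{S}|_C)=\det\mathcal{S}\cdot[C]\ge 0$ because $\det\mathcal{S}$ is pseudo-effective and $[C]$ is a movable class, and the tensor product of a nef bundle with a nonnegative line bundle is nef. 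Running the covering-curve argument of (1) with these movable curves shows $\tilde\sigma$ is nowhere vanishing on $X_\mathcal{E}$, i.e. $\det\mathcal{S}\to(\bigwedge^s\mathcal{E})^\vee$ is a line subbundle inclusion there.

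It remains to translate this into the stated conclusion, and this is where I expect the main difficulty. Nonvanishing of $\bigwedge^s\iota$ at a point $x\in X_\mathcal{E}$ at which $\mathcal{S}$ is already locally free is equivalent to $\iota_x$ having maximal rank, hence to $\iota$ being a subbundle inclusion with locally free cokernel at $x$; as $\mathcal{E}^\vee$ is locally free on $X_\mathcal{E}$, this forces $\mathcal{S}$ to be locally free there as well. The subtlety is that a reflexive sheaf is a priori only locally free away from a set of codimension $\ge 3$, so I must upgrade the determinant nonvanishing to genuine fiberwise injectivity of $\iota$ on all of $X_\mathcal{E}$. I would do this by comparing $\mathcal{S}$ with its saturation $\mathcal{S}^{\mathrm{sat}}$ in $\mathcal{E}^\vee$: the two have the same determinant up to a twist by $\mathcal{O}(D)$ with $D\ge 0$ effective, and on movable curves the degree of $\det\mathcal{S}$ is squeezed between the pseudo-effective lower bound and the anti-nef upper bound coming from $\bigwedge^s\mathcal{E}|_C$, forcing $D\cdot[C]=0$ for all movable $[C]$ and hence $D=0$; thus $\mathcal{S}=\mathcal{S}^{\mathrm{sat}}$ is saturated, and the subbundle property of the determinant then yields that $\iota$ is a fiberwise-injective bundle morphism and $\mathcal{S}$ is locally free on $X_\mathcal{E}$.
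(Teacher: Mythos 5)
Your part (1) and the first half of part (2) are correct and follow the same route as the paper: general complete intersection curves through a prescribed point of $X_\mathcal{E}$, avoiding the countable family $\{Z_i\}$ enlarged by $X\setminus X_\mathcal{E}$, reduce everything to the fact that a non-zero section of the dual of a nef bundle on a smooth curve is nowhere vanishing (the paper cites \cite[Proposition 1.16]{DPS94} for this curve lemma, which you re-prove correctly), and for (2) one passes to the determinant section $\tilde\sigma$ of $(\Lambda^s\mathcal{E}\otimes\det\mathcal{S})^\vee$, notes that $\Lambda^s\mathcal{E}\otimes\det\mathcal{S}$ is almost nef because $\det\mathcal{S}$ is pseudo-effective, and applies (1) to get that $\tilde\sigma$ is nowhere vanishing on $X_\mathcal{E}$. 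Up to this point your argument matches the paper's.

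The gap is in your last paragraph. At this stage the paper simply cites \cite[Lemma 1.20]{DPS94}: a rank-$s$ subsheaf of a vector bundle whose induced determinant section is nowhere vanishing is a subbundle. You attempt to re-derive this, and the re-derivation is incomplete exactly at its crux. Your observation that nonvanishing of $\Lambda^s\iota$ at $x$ forces fiberwise injectivity is valid only at points where $\mathcal{S}$ is already locally free, as you yourself note; and your fix --- proving $\mathcal{S}=\mathcal{S}^{\mathrm{sat}}$ by the degree squeeze on movable curves --- is correct but does not finish the job. Saturatedness only says that $\mathcal{Q}=\mathcal{E}^\vee/\mathcal{S}$ is torsion free, hence that $\mathcal{S}$ is a subbundle outside a subset of $X_\mathcal{E}$ of $\codim\geq 2$; local freeness of $\mathcal{S}$ at the remaining points is precisely what still has to be shown, and your final sentence (``the subbundle property of the determinant then yields that $\iota$ is a fiberwise-injective bundle morphism and $\mathcal{S}$ is locally free'') asserts this conclusion rather than deriving it. The missing argument, which is the actual content of the cited lemma, runs as follows: the nowhere-zero section $\tilde\sigma$ is pointwise decomposable (decomposability is a closed condition, and it holds on the dense open set where $\mathcal{S}$ is a subbundle), so the bundle map $\mathcal{E}^\vee\to\Lambda^{s+1}\mathcal{E}^\vee\otimes\det\mathcal{S}^\vee$, $v\mapsto v\wedge\tilde\sigma$, has kernel of constant rank $s$; this kernel $W$ is therefore a subbundle containing $\mathcal{S}$, and $W/\mathcal{S}$ is a torsion subsheaf of the torsion-free $\mathcal{Q}$, hence zero, so $\mathcal{S}=W$ is a subbundle. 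Inserting this step (or simply citing \cite[Lemma 1.20]{DPS94}, as the paper does) closes the gap.
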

\begin{proof}
In \cite{DPS94}, the same conclusion was proved for nef vector bundles. 
We denote by $Z$ a countable union of proper subvarieties of $X$ 
satisfying the definition of almost nef sheaves. 
We may assume that $X\setminus X_{\mathcal{E}} \subset Z$
by adding the subvariety $X\setminus X_{\mathcal{E}}$ into $Z$. 
\smallskip\\
(1)
Let $\tau \in H^{0}(X, \mathcal{E}^{\vee})$ be a non-zero section. 
For an arbitrary point $p \in X_{\mathcal{E}}$, 
by taking a complete intersection of ample hypersurfaces, 
we construct a curve $C$ passing through $p$ 
such that $ C \not \subset Z$.
We may assume that  $C \subset X_{\mathcal{E}}$ 
by $\codim (X \setminus X_{\mathcal{E}}) \geq 2$.  
Then $\mathcal{E}|_C$ is a nef vector bundle thanks to $C \subset X_{\mathcal{E}}$, 
and thus it follows that the non-zero section $\tau|_C$ is non-vanishing from \cite[Proposition 1.16]{DPS94}. 
In particular, the section $\tau$ is non-vanishing at $p$. 
\smallskip\\
(2) Following the argument in \cite{DPS94}, 
we obtain the non-zero section 
$$
\tau \in H^{0}(X, \Lambda^p  \mathcal{E}^{\vee} \otimes \det \mathcal{S}^\vee) 
$$ 
from the induced morphism $\det \mathcal{S} \to \Lambda^p \mathcal{E}^{\vee}$. 
Here $p:=\rank \mathcal{S}$. 
We remark that $\Lambda^p  \mathcal{E} \otimes \det \mathcal{S}$ is also almost nef by the assumption on $\mathcal{S}$. 
Hence, by applying the first conclusion and \cite[Lemma 1.20]{DPS94} to $\tau$, 
we can obtain the desired conclusion. 
\end{proof}

\begin{lemm}\label{lemm-nonvanish2}
Let $X$ be a compact complex manifold and 
let $\mathcal{E}$ be a pseudo-effective torsion free coherent sheaf on $X$. 
Then the same conclusion as in Lemma \ref{lemm-nonvanish} holds.  
\end{lemm}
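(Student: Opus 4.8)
The plan is to run the argument of Lemma \ref{lemm-nonvanish}, but since $X$ is now only compact complex and need not carry any curves, to replace the restriction-to-a-complete-intersection-curve step by a direct use of the singular-metric Definition \ref{defi-pseudo}. As in Lemma \ref{lemm-nonvanish}, I would first establish (1) and then deduce (2) from it formally. For the reduction of (2) to (1): given the injection $0 \to \mathcal{S} \to \mathcal{E}^\vee$ with $p := \rank \mathcal{S}$, the induced map $\det \mathcal{S} \to \Lambda^p \mathcal{E}^\vee$ produces a nonzero section $\tau \in H^0(X, \Lambda^p \mathcal{E}^\vee \otimes \det \mathcal{S}^\vee) = H^0(X, (\Lambda^p \mathcal{E} \otimes \det \mathcal{S})^\vee)$. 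Exactly as in Lemma \ref{lemm-nonvanish}, I would check that $\Lambda^p \mathcal{E} \otimes \det \mathcal{S}$ is again pseudo-effective --- using that exterior powers of a positively-curved-up-to-$\omega$ metric stay positively curved and that tensoring by the pseudo-effective line bundle $\det \mathcal{S}$ preserves the uniform lower bound of Definition \ref{defi-pseudo} --- and then apply part (1) together with \cite[Lemma 1.20]{DPS94} to reach the conclusion.

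The heart of the matter is therefore (1). Fix a nonzero $\tau \in H^0(X, \mathcal{E}^\vee)$ and suppose, for contradiction, that $\tau(p) = 0$ at some $p \in X_\mathcal{E}$. Using the characteristic-zero identification $\Sym^m(\mathcal{E}^\vee) = (\Sym^m \mathcal{E})^\vee$, the power $\tau^{\otimes m}$ defines a nonzero global section $s_m \in H^0(X, (\Sym^m \mathcal{E})^\vee)$ vanishing to order at least $m$ at $p$. Definition \ref{defi-pseudo} furnishes metrics $h_m$ on $\Sym^m \mathcal{E}$ with $\deldel \log |s_m|^2_{h_m^\vee} \geq -\omega$ on $X_\mathcal{E}$, so the functions $\varphi_m := \tfrac{1}{m}\log|s_m|^2_{h_m^\vee}$ satisfy $\deldel \varphi_m \geq -\tfrac1m \omega$ there. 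Since $\codim(X \setminus X_\mathcal{E}) \geq 2$ and each $\log|s_m|^2_{h_m^\vee}$ is a globally defined quasi-psh potential, I would extend $\varphi_m$ across $X \setminus X_\mathcal{E}$ to a $\tfrac1m\omega$-psh function on all of $X$. After a suitable normalization, a weak-$*$ limit $\varphi_\infty$ of a subsequence is then a psh function on the compact manifold $X$; by the maximum principle $\varphi_\infty$ is constant, so the positive current $\deldel \varphi_\infty$ vanishes and in particular has Lelong number $0$ at $p$.

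The contradiction comes from the vanishing of $\tau$. Because $s_m$ vanishes to order at least $m$ at $p$, the Lelong number of $\deldel \log|s_m|^2_{h_m^\vee}$ at $p$ should be bounded below by a fixed multiple of $m$, so that $\nu(\deldel\varphi_m, p) \geq c > 0$ uniformly in $m$; by upper semicontinuity of Lelong numbers under weak-$*$ convergence this would force $\nu(\deldel\varphi_\infty, p) \geq c > 0$, contradicting that $\varphi_\infty$ is constant. Hence $\tau$ is nowhere vanishing on $X_\mathcal{E}$.

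The step I expect to be the main obstacle is precisely this uniform positive lower bound on the Lelong numbers, together with the uniform upper bound needed to extract the weak limit. The difficulty is that the metrics $h_m$ are controlled only through the single defect inequality of Definition \ref{defi-pseudo} and may degenerate; a degeneration of $h_m$ at $p$ makes $h_m^\vee$ blow up there and could, a priori, offset the vanishing of $s_m$ and depress the Lelong number. Making the estimate robust --- controlling $h_m^\vee$ near both the zero set of $\tau$ and the codimension-$\geq 2$ locus $X \setminus X_\mathcal{E}$, and arranging normalizations uniform in $m$ --- is where the real work lies; by contrast, the formal reduction of (2) to (1) and the extension across $X \setminus X_\mathcal{E}$ are comparatively routine.
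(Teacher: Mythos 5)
Your overall architecture coincides with the paper's: form $f_m=\tfrac1m\log|\tau^m|_{h_m^\vee}$, note $\deldel f_m\geq -\tfrac1m\omega$, extract a weak limit that must be trivial, and contradict this with a lower bound $\nu(f_m,p)\geq 1$ on the Lelong numbers coming from the order-$m$ vanishing of $\tau^m$ at $p$. But you then declare precisely this Lelong-number bound to be ``the main obstacle \dots where the real work lies'' and leave it unproven, on the grounds that a degeneration of $h_m$ could make $h_m^\vee$ blow up at $p$ and offset the vanishing of the section. This is a genuine gap: without that estimate the contradiction collapses, and you supply no argument for it.

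The missing idea is that Definition \ref{defi-pseudo} imposes the inequality $\deldel\log|u|^2_{h_m^\vee}\geq-\omega$ for \emph{every} local holomorphic section $u$, not only for the single global section $\tau^m$. Applying it to the members $e_I$ of a local frame of $\Sym^m(\mathcal{E}^\vee)$ near $p$, each $\log|e_I|_{h_m^\vee}$ is quasi-psh, hence locally bounded above, and by Cauchy--Schwarz the coefficients $|\langle e_I,e_J\rangle_{h_m^\vee}|$ are locally bounded (this is the content of \cite[Lemma 2.2.4]{PT}, which the paper invokes). So the dual metric can never blow up; the feared degeneration is impossible, and the opposite degeneration ($h_m^\vee\to 0$) only increases Lelong numbers. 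Writing $\tau^m=\sum_I\tau_I e_I$ with holomorphic coefficients $\tau_I$ vanishing to order $\geq m$ at $p$, one gets $|\tau^m|_{h_m^\vee}\leq C_m\sum_I|\tau_I|$, whence $\nu(f_m,p)\geq\tfrac1m\cdot m=1$; the constant $C_m$ may depend on $m$ but contributes only the additive term $\tfrac1m\log C_m$ and does not affect the Lelong number, so the bound is automatically uniform in $m$. With this estimate in hand, the remaining steps of your outline (extension of $f_m$ across the codimension-$\geq 2$ set $X\setminus X_{\mathcal{E}}$, normalization and $L^1$-compactness of quasi-psh functions, semicontinuity of Lelong numbers under weak limits) go through and reproduce the paper's proof. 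Note also that the paper proves only conclusion (1) and leaves (2) to the formal reduction as in Lemma \ref{lemm-nonvanish}, which is what you propose as well.
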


\begin{proof}[Proof of Lemma \ref{lemm-nonvanish2}]
We will prove only the conclusion (1). 
For the metric $h_m$ on $\Sym^m \mathcal{E}$ satisfying the property 
in Definition \ref{defi-pseudo}, 
we consider the function $f_m$ on $X$ defined by 
$$
f_m:=\frac{1}{m} \log | \tau^{ m} |_{h^\vee_m}. 
$$
By the construction of $h_m$, we have 
$$
\deldel f_m \geq -\frac{1}{m} \omega, 
$$
and thus its weak limit (after we take a subsequence) should be zero. 
On the other hand, when we assume $\tau$ has the zero point at some point $p \in X_{\mathcal{E}}$, 
it can be shown that the Lelong number of $f_m$ is greater than or equal to one. 
This is a contradiction to the fact that the weak limit is zero. 
Indeed, the section $\tau^{  m}$ can be locally written as 
$
\tau^{m}=\sum_I \tau_I e_I. 
$
Here $\{e_i\}_{i=1}^r$ is a local frame of $\mathcal{E}$, 
$I$ is a multi-index of degree $m$, and $e_I:=\prod_{i\in I} e_i$. 
It follows that the holomorphic function $\tau_I$ has the multiplicity $\geq m$ at $p$
from $\tau=0$ at  $p \in X_{\mathcal{E}}$. 
It can be seen that $|\langle e_I, e_J \rangle_{h^\vee_m}|$ is bounded 
since 
$\log | u |_{h^\vee_m}$ is almost psh for any local section $u$
(for example see \cite[Lemma 2.2.4]{PT}). 
Hence we can easily check that 
$$
| \tau^{  m} |_{h^\vee _m} \leq C \sum_I |\tau_I |. 
$$
This implies that the Lelong number of $f_m$ is greater than or equal to one. 
\end{proof}

\begin{lemm}\label{lemm-ele}
Let $X$ be a projective manifold and $E$ be a vector bundle on $X$. 
Let $X_0$ be a Zariski open set in $X$ with $\codim (X \setminus X_0) \geq 2+i$.
Then the morphism induced by the restriction 
$$
H^{j}(X, E) \to H^{j}(X_0, E)
$$
is an isomorphism for any $j \leq i$. 
\end{lemm}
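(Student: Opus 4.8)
The plan is to deduce the statement from the vanishing of local cohomology along the removed locus in low degrees, which in turn rests on the fact that $E$ is locally free on the smooth (hence Cohen--Macaulay) variety $X$.

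First I would set $Z := X \setminus X_0$ and bring in the local cohomology groups $H^{k}_Z(X, E)$ with support in $Z$, together with the long exact sequence
$$
\cdots \longrightarrow H^{j}_Z(X, E) \longrightarrow H^{j}(X, E) \longrightarrow H^{j}(X_0, E) \longrightarrow H^{j+1}_Z(X, E) \longrightarrow \cdots
$$
relating the cohomology of $X$, the cohomology of the open set $X_0 = X \setminus Z$, and the local cohomology. Reading off this sequence, the restriction map $H^{j}(X, E) \to H^{j}(X_0, E)$ is injective once $H^{j}_Z(X, E)=0$ and surjective once $H^{j+1}_Z(X, E)=0$; hence it is an isomorphism as soon as both of these local cohomology groups vanish.

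The second step supplies the vanishing. Since $X$ is smooth it is Cohen--Macaulay, and since $E$ is locally free the depth of $E$ along $Z$ equals $\codim (X \setminus X_0)$. By Grothendieck's characterization of local cohomology through depth, this yields $H^{k}_Z(X, E) = 0$ for every $k < \codim (X \setminus X_0)$. Finally I would combine the two steps: the hypothesis $\codim (X \setminus X_0) \geq 2 + i$ gives $H^{k}_Z(X, E) = 0$ for all $k \leq i+1$, so for any $j \leq i$ we have both $j \leq i+1$ and $j+1 \leq i+1$, whence $H^{j}_Z(X, E) = H^{j+1}_Z(X, E) = 0$ and the restriction map is an isomorphism.

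The only point demanding care is the identification of the depth of $E$ along $Z$ with $\codim (X \setminus X_0)$; this is exactly where the smoothness of $X$ and the local freeness of $E$ enter, since on a non-Cohen--Macaulay base, or for a merely torsion free sheaf, one would only control depth from below by weaker bounds and the sharp range $j \leq i$ could break down. Everything else is the formal exactness of the local cohomology sequence, so I expect the depth-versus-codimension input to be the sole substantive ingredient.
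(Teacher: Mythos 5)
Your proof is correct, and it takes a genuinely different route from the paper's. The paper disposes of this lemma in one line, invoking ``the standard argument in terms of ample hypersurfaces and the induction on dimension'': one cuts $X$ by general members of an ample linear system, in the spirit of the Enriques--Severi--Zariski lemma, reducing the dimension and the cohomological degree step by step; this argument uses projectivity of $X$ in an essential way and is left as a sketch. You instead use the long exact sequence of local cohomology for the pair $(X, Z)$, $Z = X \setminus X_0$, together with Grothendieck's depth criterion $H^{k}_Z(X,E)=0$ for $k < \operatorname{depth}_Z E$; since $X$ is smooth (hence Cohen--Macaulay) and $E$ is locally free, $\operatorname{depth}_Z E = \codim Z \geq 2+i$, so $H^{j}_Z(X,E)$ and $H^{j+1}_Z(X,E)$ both vanish throughout the range $j \leq i$, which is precisely what the exact sequence requires for bijectivity of the restriction map. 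Your approach buys several things: it is complete rather than sketched, it is purely local along $Z$ and so never uses projectivity or even compactness of $X$ (the statement holds on any smooth variety), and it makes transparent that the range $j \leq i$ is exactly dictated by the equality of depth and codimension --- in particular it explains why the range would degrade for a sheaf that is merely torsion free (depth $\geq 1$) or reflexive (depth $\geq 2$) instead of locally free. What the paper's hyperplane-slicing route buys is the avoidance of local cohomology machinery, at the cost of projectivity and of the details being omitted. One small point to record: your argument as written lives in the algebraic (Zariski) category; if one wants the analogous statement for analytic cohomology on the Zariski open set $X_0$, the same proof pattern goes through with the depth-vanishing supplied by Scheja's extension theorem for coherent analytic sheaves. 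For the paper's application (extending an extension class, i.e. $j=1$, across a set of codimension $\geq 3$) either version suffices.
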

\begin{proof}
The proof is given by the standard argument in terms of ample hypersurfaces 
and the induction on dimension. 
\end{proof}

Theorem \ref{re-theo-vect}, which is a slight generalization of \cite{CM}, 
heavily depends on the theory of admissible Hermitian-Einstein metrics developed in \cite{BS}.

\begin{theo}[=Theorem \ref{theo-vect}, cf. \cite{CM}]\label{re-theo-vect}
Let $X$ be a projective manifold and 
let $\mathcal{E}$ be a reflexive coherent sheaf. 
If $\mathcal{E}$ is pseudo-effective and the first Chen class $c_1(\mathcal{E})$ is zero, 
then $\mathcal{E}$ is locally free and numerically flat. 
\end{theo}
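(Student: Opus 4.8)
The plan is to combine the theory of admissible Hermitian--Einstein metrics of Bando--Siu (\cite{BS}) with the Bogomolov--L\"ubke inequality, and to use pseudo-effectivity to force the mean curvature, and then the full curvature, to vanish. Write $n = \dim X$ and $r = \rank \mathcal{E}$, and fix an ample divisor $H$ on $X$, measuring slopes $\mu_H(\cdot)$ against $H^{n-1}$. Since $\mathcal{E}$ is pseudo-effective it is almost nef, so Lemma \ref{lemm-nonvanish} is available. I would first show that $\mathcal{E}$ is $H$-semistable of slope $0$: we have $\mu_H(\mathcal{E}) = 0$ because $c_1(\mathcal{E}) = 0$, and for any torsion free quotient $\mathcal{Q}$ of $\mathcal{E}$ the sheaf $\mathcal{Q}$ is again pseudo-effective (pseudo-effectivity, i.e.\ weak positivity at a general point, passes to quotients), so $\det \mathcal{Q}$ is a pseudo-effective line bundle and hence $\mu_H(\mathcal{Q}) = c_1(\mathcal{Q})\cdot H^{n-1}/\rank\mathcal{Q} \geq 0 = \mu_H(\mathcal{E})$. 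Passing to the graded object $\mathcal{G} = \operatorname{gr}(\mathcal{E})$ of a Jordan--H\"older filtration yields a polystable reflexive sheaf of slope $0$ whose stable factors all have slope $0$; moreover each factor has numerically trivial first Chern class, since its determinant is pseudo-effective and pairs to zero with $H^{n-1}$.

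Next I would invoke the existence theorem of Bando--Siu for the polystable sheaf $\mathcal{G}$: it carries an admissible Hermitian--Einstein metric $h$, smooth and locally free outside an analytic set of codimension $\geq 3$, whose curvature satisfies the Einstein condition $\sqrt{-1}\Lambda_\omega \Theta_h = \mu_H(\mathcal{G})\cdot \operatorname{id} = 0$. The Chern--Weil theory for such admissible metrics gives the Bogomolov--L\"ubke inequality
$$
\int_X \big(2 r\, c_2(\mathcal{E}) - (r-1)\, c_1(\mathcal{E})^2\big)\wedge \omega^{n-2} \geq 0,
$$
with equality if and only if $(\mathcal{G}, h)$ is projectively flat; here I use $c_i(\mathcal{E}) = c_i(\mathcal{G})$. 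Since $c_1(\mathcal{E}) = 0$ this reads $\int_X c_2(\mathcal{E})\wedge\omega^{n-2}\geq 0$, and together with $c_1 = 0$ projective flatness upgrades to genuine flatness (the induced Hermite--Einstein metric on $\det\mathcal{E}$ has $\Lambda_\omega$-harmonic curvature cohomologous to $0$, hence vanishing).

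The main obstacle is the \emph{reverse} inequality $\int_X c_2(\mathcal{E})\wedge\omega^{n-2}\leq 0$, which is exactly what forces $\Theta_h \equiv 0$ and is where pseudo-effectivity must really be used. To attack it I would pass to $\pi\colon \mathbb{P}(\mathcal{E})\to X$ and exploit the Segre-class identity
$$
\int_X c_2(\mathcal{E})\wedge\omega^{n-2} = -\int_{\mathbb{P}(\mathcal{E})} \xi^{\,r+1}\wedge \pi^*\omega^{n-2},
$$
valid because $c_1(\mathcal{E}) = 0$, where $\xi = c_1(\mathcal{O}_{\mathbb{P}(\mathcal{E})}(1))$. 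Pseudo-effectivity of $\mathcal{E}$ means $\xi$ is a pseudo-effective class, so one would like the right-hand integral to be nonnegative; this is delicate, since pseudo-effectivity alone does not make high self-intersections of $\xi$ sign-definite. The point is to replace bare pseudo-effectivity by the extra rigidity coming from the Einstein metric $h$ (which induces a metric on $\mathcal{O}_{\mathbb{P}(\mathcal{E})}(1)$ with curvature controlled by $\Theta_h$) together with the almost-nef property: restriction to a general complete-intersection curve $C$ makes $\mathcal{E}|_C$ a degree-zero nef, hence numerically flat, bundle, and combining this with the vanishing mean curvature of $h$ pins the sign. Granting the reverse inequality, $\int_X c_2(\mathcal{E})\wedge\omega^{n-2} = 0$, so $(\mathcal{G},h)$ is flat.

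Finally, flatness makes $\mathcal{G}$ a hermitian flat sheaf, given on its locally free locus by a unitary representation of $\pi_1(X)$; the flat connection extends across the codimension-$\geq 3$ singular set, so $\mathcal{G}$ is a genuine flat, hence numerically flat, vector bundle, and so is each of its stable summands. Because $\mathcal{E}$ is realized by its Jordan--H\"older filtration as an iterated extension of these stable factors, and since extensions of locally free sheaves are locally free while extensions of numerically flat bundles are again numerically flat (an extension of nef bundles is nef and the first Chern classes add to $0$), I conclude that $\mathcal{E}$ is locally free and numerically flat, as claimed.
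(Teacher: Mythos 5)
Your overall strategy (reduce to stability via pseudo-effectivity of quotients, then invoke Bando--Siu admissible Hermitian--Einstein metrics and \cite[Corollary 3]{BS}) belongs to the same family as the paper's argument, but your proof has a genuine gap at precisely the decisive step, and you acknowledge it yourself: the reverse inequality $\int_X c_2(\mathcal{E})\wedge\omega^{n-2}\leq 0$ is only ``granted.'' The route you sketch for it cannot work as stated. Restricting to a general complete-intersection \emph{curve} $C$ only shows that $\mathcal{E}|_C$ is numerically flat, which is $c_1$-type information along curves and says nothing about $c_2$; no combination of this with the vanishing of the mean curvature $\sqrt{-1}\Lambda_\omega\Theta_h$ pins down the sign of $\int_{\mathbb{P}(\mathcal{E})}\xi^{\,r+1}\wedge\pi^*\omega^{n-2}$. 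What closes this hole in the paper is restriction to a general complete-intersection \emph{surface} $S=H_1\cap\dots\cap H_{n-2}$: since the non-locally-free locus of the reflexive sheaf $\mathcal{E}$ has codimension $\geq 3$, the restriction $\mathcal{E}|_S$ is a genuine vector bundle, pseudo-effective (hence almost nef) with $c_1(\mathcal{E}|_S)=0$, so the known surface case (\cite{DPS94}, or \cite[Corollary 2.12]{CH17}) gives that $\mathcal{E}|_S$ is numerically flat, whence $c_2(\mathcal{E}|_S)=0$ and $\int_X c_2(\mathcal{E})\cdot c_1(A)^{n-2}=0$. With this input, the appeal to \cite[Corollary 3]{BS} does yield flatness in the stable case.

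Even after fixing the $c_2$ step, two further points in your plan need repair. First, your Jordan--H\"older factors $\mathcal{G}_i=\mathcal{F}_i/\mathcal{F}_{i-1}$ are quotients of \emph{subsheaves} of $\mathcal{E}$, and pseudo-effectivity does not pass to subsheaves; so the pseudo-effectivity of $\det\mathcal{G}_i$ is not automatic (one can salvage $c_1(\mathcal{G}_i)\equiv 0$ via the genuine quotients $\mathcal{E}/\mathcal{F}_{i-1}$, whose determinants are pseudo-effective of degree zero), and the $c_2$-vanishing must be distributed to the factors by an additional accounting, using $\sum_i\int c_2(\mathcal{G}_i)\wedge\omega^{n-2}=\int c_2(\mathcal{E})\wedge\omega^{n-2}=0$, the Bogomolov--L\"ubke inequality for each reflexivized factor $\mathcal{G}_i^{\vee\vee}$, and the fact that reflexivization can only decrease $\int c_2\wedge\omega^{n-2}$. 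Second, the factors need not be reflexive, and the extensions realizing $\mathcal{E}$ from the reflexivized factors are a priori defined only outside a codimension $\geq 3$ set, so ``iterated extension of locally free sheaves is locally free'' requires extending the extension classes across that set; this is exactly the role of Lemma \ref{lemm-ele} in the paper. Note that the paper's induction sidesteps Jordan--H\"older filtrations altogether: it takes a subsheaf $\mathcal{S}\subset\mathcal{E}$ of minimal rank among those with $\int_X c_1(\mathcal{S})\cdot c_1(A)^{n-1}\geq 0$ (the stable case being $\mathcal{S}=\mathcal{E}$), and in the non-stable case it establishes pseudo-effectivity of $\mathcal{S}$ by exhibiting it as a quotient $\Lambda^{m+1}\mathcal{E}\otimes\det\mathcal{Q}^\vee\twoheadrightarrow\mathcal{S}$ on $X_{\mathcal{E}}$, which is exactly how it evades the ``subsheaves are not pseudo-effective'' obstruction your filtration runs into.
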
 

\begin{proof}[Proof of Theorem \ref{re-theo-vect}]
The induction on the rank $r$ of $\mathcal{E}$ will give the proof. 
Reflexive coherent sheaves of rank one are always line bundles (see \cite{Har80}), 
and thus the conclusion is obvious in the case of $r=1$. 
It is not so difficult to check the numerical flatness of $\mathcal{E}$ 
 if $\mathcal{E}$ is shown to be locally free (see the proof in \cite[Theorem 1.18]{DPS94} or the argument below). 
We will focus on the proof of local freeness.

In the proof, we fix an ample line bundle $A$ on $X$. 
In the case of $r>1$, we take a coherent subsheaf $\mathcal{S}$ 
with the minimal rank among coherent subsheaves of $\mathcal{E}$ satisfying that 
$\int_X c_{1} (\mathcal{S}) \cdot c_{1}(A)^{n-1} \geq 0$. 
We may assume that $\mathcal{S}$ is reflexive by taking the double dual if necessary. 
Now we consider the following exact sequence of sheaves:
\begin{align}\label{exact}
0 \rightarrow \mathcal{S} \rightarrow \mathcal{E} \rightarrow 
\mathcal{Q}:=\mathcal{E}/\mathcal{S}    \rightarrow 0.
\end{align}
The quotient sheaf $\mathcal{Q}:=\mathcal{E}/\mathcal{S} $ is pseudo-effective. 
In particular, the first Chern class $c_{1}(\mathcal{Q})$ is also pseudo-effective. 
On the other hand, we have 
$$
0=c_{1}(\mathcal{E})=c_{1}(\mathcal{S})+c_{1}(\mathcal{Q}). 
$$
Then it follows that $c_{1}(\mathcal{S}) = c_{1}(\mathcal{Q})=0$ 
since $c_{1}(\mathcal{Q})$ is pseudo-effective and we have 
$$
\int_X c_{1} (\mathcal{Q}) \cdot c_{1}(A)^{n-1}=
-\int_X c_{1} (\mathcal{S}) \cdot c_{1}(A)^{n-1}\leq 0. 
$$
By applying Lemma \ref{lemm-nonvanish} to $\mathcal{Q}^{\vee} \to \mathcal{E}^\vee$, 
we can see that $\mathcal{Q}$ (and thus $\mathcal{S}$) is a vector bundle on $X_{\mathcal{E}}$ 
and the above morphism is a bundle morphism on $X_{\mathcal{E}}$.



We first consider the case where the rank of $\mathcal{S}$ is equal to $r=\rank \mathcal{E}$. 
In this case, we obtain $\mathcal{S}=\mathcal{E}$. 
Indeed, it follows that $\mathcal{S} \cong \mathcal{E}$ on $X_{\mathcal{E}}$
since the bundle morphism $\mathcal{S} \to \mathcal{E}$ on $X_{\mathcal{E}}$ is an isomorphism. 
Then we can easily check $\mathcal{S}=\mathcal{E}$ by the reflexivity and $\codim(X \setminus X_{\mathcal{E}} ) \geq 3$. 
Further we can prove that 
$$
\int_X c_{2}(\mathcal{E}) \cdot c_1(A)^{n-2}=0. 
$$
Indeed, for a surface $S:=H_1 \cap H_2 \cap \dots \cap H_{n-2}$ in $X$
constructed by general members $H_i$ of  a complete linear system $A$, 
it follows that 
$\mathcal{E}|_S$ is a pseudo-effective vector bundle 
from $\codim (X \setminus X_{\mathcal E}) \geq 3$. 
Hence $\mathcal{E}|_S$ is numerically flat on $S$, 
and thus $c_{2}(\mathcal{E}|_S) =0$ (see \cite{DPS94} or \cite[Corollary 2.12]{CH17}). 
We can easily check that 
$$
\int_X c_{2}(\mathcal{E}) \cdot c_1(A)^{n-2} =
\int_S c_{2}(\mathcal{E}|_S) =0. 
$$
By the assumption of $c_1(\mathcal{E})=0$ and the result of \cite[Corollary 3]{BS},  
we can conclude that $\mathcal{E}$ is a hermitian flat vector bundle on $X$ 
from the stability of the reflexive sheaf $\mathcal{S}=\mathcal{E}$.
Therefore  $\mathcal{E}$ is locally free and numerically flat.

It remains to consider the case of $\rank \mathcal{S} < \rank \mathcal{E}$.
In this case, we consider the surjective bundle morphism 
$$
\Lambda^{m+1} \mathcal{E} \otimes \det \mathcal{Q}^\vee \to \mathcal{S}
$$
on $X_{\mathcal{E}}$. 
By $\codim (X \setminus X_{\mathcal{E}}) \geq 3$ and $c_1(\mathcal{Q})=0$, 
the reflexive sheaf $\mathcal{S}$ is pseudo-effective. 
Therefore we can conclude that $\mathcal{S}$  is a numerically flat vector bundle on $X$ 
by the induction hypothesis. 

On the other hand, 
the sheaf $\mathcal{Q}$ itself may not be a vector bundle, 
but, the reflexive hull $\mathcal{Q}^{\vee \vee}$ is a vector bundle on $X$ 
by the induction hypothesis. 
The extension class obtained from the exact sequence (\ref{exact}) on $X_\mathcal{E}$ 
can be extended to the extension class (defined on $X$) of 
$\mathcal{S}$ and $\mathcal{Q}^{\vee \vee}$  by Lemma \ref{lemm-ele}. 
The extended class determines the vector bundle whose restriction to  $X_\mathcal{E}$  corresponds to $\mathcal{E}$. 
This implies that $\mathcal{E}$ is a vector bundle by the reflexivity of $\mathcal{E}$. 
\end{proof}

\subsection{Splitting theorem for positively curved vector bundles}\label{Sec-3-1}
In this subsection, 
we prove Theorem \ref{theo-split2} 
and Theorem \ref{theo-split}.

\begin{lemm}\label{lem-vector}
Let $\mathcal{Q}$ be a reflexive coherent sheaf on a compact complex manifold $X$. 
If $\mathcal{Q}$ admits a positively curved singular hermitian metric $g_\mathcal{Q}$ and $c_1(\mathcal{Q}) = 0$, 
then  we have: 
\begin{itemize}
\item[(1)] $(\mathcal{Q}, g_\mathcal{Q})$ is hermitian flat on $X_{\mathcal{Q}}$.  
\item[(2)] If we further assume that $X$ is K\"ahler, then 
$\mathcal{Q}$ is a locally free sheaf on $X$ and $g_\mathcal{Q}$ extends to  a hermitian flat metric on $X$. 
\end{itemize}
\end{lemm}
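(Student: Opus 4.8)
The plan is to reduce everything to the line bundle $\det\mathcal{Q}$, flatten that first, propagate the flatness to $g_{\mathcal{Q}}$ on $X_{\mathcal{Q}}$, and only at the end use the K\"ahler hypothesis to extend across $X\setminus X_{\mathcal{Q}}$.

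For (1), I would first pass to the determinant. As $\mathcal{Q}$ is reflexive, $\det\mathcal{Q}=(\Lambda^{\rank\mathcal{Q}}\mathcal{Q})^{\vee\vee}$ is an honest line bundle on $X$, and the positively curved metric $g_{\mathcal{Q}}$ induces a metric $\det g_{\mathcal{Q}}$ on $\det\mathcal{Q}|_{X_{\mathcal{Q}}}$ whose local weight $-\log\det H$ (with $H$ the Gram matrix of a holomorphic frame) is plurisubharmonic. Since $\codim(X\setminus X_{\mathcal{Q}})\geq 2$, this weight extends plurisubharmonically across the non--locally--free locus, giving a singular metric on $\det\mathcal{Q}$ over all of $X$ with closed positive curvature current $T\geq 0$ of class $c_1(\det\mathcal{Q})=c_1(\mathcal{Q})=0$. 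Fixing a Gauduchon metric $\omega_G$ on the compact manifold $X$, the identity $\deldel\,\omega_G^{n-1}=0$ renders $\int_X T\wedge\omega_G^{n-1}$ insensitive to adding $\deldel(\text{global function})$, so the vanishing of the class $c_1$ forces $\int_X T\wedge\omega_G^{n-1}=0$; positivity of $T$ then gives $T\equiv 0$. Hence $\det g_{\mathcal{Q}}$ is flat on $X_{\mathcal{Q}}$.

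With the determinant flat, I would upgrade to flatness of $g_{\mathcal{Q}}$ itself. Locally on $X_{\mathcal{Q}}$, choose a holomorphic frame $u_1,\dots,u_r$ of $\mathcal{Q}^\vee$ and set $G_{i\bar j}=\langle u_i,u_j\rangle_{g^\vee_{\mathcal{Q}}}$ for the dual metric. Positivity of $g_{\mathcal{Q}}$ makes each $\log G_{ii}=\log|u_i|^2_{g^\vee_{\mathcal{Q}}}$ plurisubharmonic, while flatness of $\det g_{\mathcal{Q}}$ makes $\log\det G$ pluriharmonic. Hadamard's inequality $\det G\leq\prod_i G_{ii}$ then displays $\sum_i\log G_{ii}-\log\det G$ as a nonnegative plurisubharmonic function whose total $\deldel$--trace vanishes, forcing first the diagonal and then, by a standard off--diagonal estimate, the whole Gram matrix to be pluriharmonic; this is exactly the assertion that a negatively curved singular metric with flat determinant is flat. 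I expect this step to be the main obstacle: for a singular hermitian metric the Chern curvature tensor need not exist as a current, so the classical ``Griffiths--semipositive with vanishing trace is zero'' computation has to be rephrased entirely through plurisubharmonicity of the functions $\log|u|_{g^\vee_{\mathcal{Q}}}$, and one must check that the flat metric so obtained is genuinely smooth on $X_{\mathcal{Q}}$.

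For (2), having (1) at hand, the metric $g_{\mathcal{Q}}$ is a flat---hence admissible Hermitian--Einstein---metric on the reflexive sheaf $\mathcal{Q}$ over the K\"ahler manifold $X$. I would invoke the regularity theory of admissible Hermitian--Einstein metrics in \cite{BS}: a reflexive sheaf carrying a flat admissible metric is locally free, and the metric extends smoothly across $X\setminus X_{\mathcal{Q}}$. Concretely, the flat unitary structure on $X_{\mathcal{Q}}$ is a representation $\pi_1(X_{\mathcal{Q}})\to U(r)$; since $\codim(X\setminus X_{\mathcal{Q}})\geq 2$ gives $\pi_1(X_{\mathcal{Q}})\cong\pi_1(X)$, it extends to $\pi_1(X)$ and defines a hermitian flat bundle $(E,h)$ on $X$ agreeing with $(\mathcal{Q},g_{\mathcal{Q}})$ on $X_{\mathcal{Q}}$; as $E$ is reflexive and $\mathcal{Q}$ is its own pushforward from the big open set $X_{\mathcal{Q}}$, we get $\mathcal{Q}\cong E$ locally free with $g_{\mathcal{Q}}$ extending to $h$. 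The K\"ahler hypothesis is precisely what makes the Bando--Siu machinery available, which is where it becomes essential.
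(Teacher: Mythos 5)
Your reduction to the determinant and your treatment of part (2) are sound, but the middle step of part (1) --- passing from flatness of $\det g_{\mathcal{Q}}$ to flatness of $g_{\mathcal{Q}}$ itself --- has a genuine gap, and it is exactly the step you yourself flag as the main obstacle. Your argument hinges on the claim that the nonnegative psh function $u:=\sum_i\log G_{i\bar i}-\log\det G$ has vanishing ``$\deldel$-trace''. Nothing forces this: you are working locally on $X_{\mathcal{Q}}$, where no compactness is available, and a nonnegative psh function need not be pluriharmonic. The underlying confusion is that $\sum_i\deldel\log G_{i\bar i}$ is \emph{not} the curvature of the determinant metric: for a holomorphic section $u_i$ of the dual, $\deldel\log|u_i|^2_{g^{\vee}}$ differs from the corresponding diagonal curvature entry by a Cauchy--Schwarz (second fundamental form) term of a definite sign, so flatness of $\det g_{\mathcal{Q}}$ gives no control on $\sum_i\deldel\log G_{i\bar i}$. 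Likewise the ``standard off-diagonal estimate'' has no meaning here, because for a singular metric the off-diagonal curvature coefficients do not a priori exist even as currents --- which is precisely the difficulty you identified but did not resolve.

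The paper closes this gap with Raufi's regularity theorem \cite[Theorem 1.6]{Rau15}: if a positively curved singular metric has non-singular determinant, then its full curvature is well-defined as an $\End$-valued $(1,1)$-current with measure coefficients. This applies here because the first step shows the weight of $\det g_{\mathcal{Q}}$ is psh with zero curvature, hence pluriharmonic, hence smooth. Once the curvature exists as a matrix of measures, the classical linear algebra goes through measure-wise: Griffiths semipositivity makes each diagonal entry $\mu_{j\bar k\alpha\bar\alpha}$ a positive measure, their sum over $\alpha$ (the trace, i.e.\ the curvature of the determinant) vanishes, so each vanishes, and polarization, namely ${\rm Re}\,(\xi^{\alpha}\bar\xi^{\beta}\sum_{j,k}\mu_{j\bar k\alpha\bar\beta}v^j\bar v^k)\geq 0$, then kills the off-diagonal entries. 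Without Raufi's theorem (or some substitute establishing existence of the curvature current) your psh-only reformulation does not reach the conclusion. For part (2), by contrast, your argument is correct given (1) and genuinely different from the paper's: the paper deduces polystability from \cite{BS} and the vanishing of $c_1$ and $c_2$, then invokes \cite[Theorem 4]{BS}, whereas your extension of the unitary representation through $\pi_1(X_{\mathcal{Q}})\cong\pi_1(X)$ combined with normality of reflexive sheaves is more elementary and, notably, does not even use the K\"ahler hypothesis.
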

\begin{proof}
(1) We follow the argument in \cite{CP17}. 
The following lemma proved by Raufi \cite{Rau15} is essential:

\begin{lemm}[{\cite[Thm 1.6]{Rau15}}]\label{lemm-Raufi}
Let $E$ be a holomorphic vector bundle and $h_E$ be a positively curved singular hermitian metric on $E$.
If the induced metric $\det h_E$ on the determinant bundle $\det E$ is non-singular 
$($that is, smooth metric$)$, 
then the curvature current $\sqrt{-1}\Theta_{h_E}$ of $h_E$ is well-defined as an $\End (E)$-valued $(1,1)$-form with measure coefficients.
\end{lemm}

In our situation $\det g_{\mathcal{Q}}$ is a positively curved singular hermitian metric on the determinant bundle $\det \mathcal{Q}$.
By $c_1(\mathcal{Q}) = 0$, the curvature $\sqrt{-1}\Theta_{\det g_{\mathcal{Q}}}$ of $\det g_{\mathcal{Q}}$ is identically zero on $X_{\mathcal{Q}}$. 
In particular, it can be seen that $\det g_{\mathcal{Q}}$ is non-singular. 
Then, by Raufi's result, the curvature current $\sqrt{-1}\Theta = \sqrt{-1}\Theta_{g_{\mathcal{Q}}}$ of $g_{\mathcal{Q}}$ is well-defined on $X_{\mathcal{Q}}$.

We locally write the curvature $\sqrt{-1}\Theta$ as
$$
\sqrt{-1}\Theta = \sum_{j,k,\alpha,\beta} \mu_{j \overline{k} \alpha \overline{\beta}}dz^j \wedge d\overline{z}^k e_\alpha \otimes e^{\vee}_\beta, 
$$
where $(z_1, \ldots, z_n)$ denotes a local coordinate and $e_1, \ldots, e_r$ denotes a local frame of $\mathcal{Q}$.
Then, by $0=\sqrt{-1}\Theta_{\det g_{\mathcal{Q}}} = \sqrt{-1}\tr \Theta_{g_{\mathcal{Q}}} $,  we obtain 
$$\sum_{j,k} \sum_\alpha \mu_{j \overline{k} \alpha \overline{\alpha}} dz^j \wedge d\overline{z}^k = 0.$$
Since $g_{\mathcal{Q}}$ is positively curved,
$$\sum_{j,k} \mu_{j \overline{k} \alpha \overline{\alpha}} dz^j \wedge d\overline{z}^k \geq 0 $$
for every $\alpha$. Then we have that
$\mu_{j \overline{k}\alpha \overline{\alpha}} = 0$ for every $j,k,\alpha$.

For every $\alpha$ and $\beta$, we have that
$${\rm Re} (\xi^\alpha \overline{\xi}^\beta \sum_{j,k} \mu_{j \overline{k}\alpha \overline{\beta}} v^j \overline{v}^k) \geq 0.$$
From this we can conclude that $\mu_{j \overline{k}\alpha \overline{\beta} }=0$ for every $j,k, \alpha, \beta$ and thus $\sqrt{-1}\Theta_{g_{\mathcal{Q}}} = 0$.

\smallskip

(2) It follows that $\mathcal{Q}$ is polystable from (1) and \cite[Theorem 3]{BS}. 
We have $\codim (X \setminus X_{\mathcal{Q}}) \ge 3$ and 
$(\mathcal{Q}, g_Q)$ is hermitian flat on $X_{\mathcal{Q}}$. 
Hence it can be shown that $c_{1}(\mathcal{Q})=0$ and $c_{2}(\mathcal{Q})=0$. 
We can see that $\mathcal{Q}$ is actually locally free and hermitian flat by  \cite[Theorem 4]{BS}. 
 \end{proof}

We prepare the following lemma for the proof of Theorem \ref{theo-split2}. 

\begin{lemm}\label{lem-flat}
Let $(E,h)$ be a hermitian flat vector bundle on a complex manifold $X$. Then for any point $x \in X$ and a basis $e_{1, x}, \ldots, e_{r,x}$ on the fiber $E_x$, there exists a local holomorphic frame $e_1, \ldots, e_r$ near $x$ such that $e_j(x) = e_{j,x}$ and $\langle e_i, e_j\rangle_h$ is constant.
\end{lemm}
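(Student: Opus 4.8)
The plan is to build a \emph{flat} holomorphic frame adapted to the metric and then adjust it by a constant invertible matrix so that it realizes the prescribed basis at $x$. The hermitian flatness of $(E,h)$ means exactly that the Chern curvature vanishes, $\sqrt{-1}\Theta_h = 0$, so the Chern connection $\nabla = \nabla^{1,0} + \bar\partial$ is a flat connection on a neighborhood of $x$. First I would invoke the standard existence theorem for flat connections: on a sufficiently small (contractible) neighborhood $U$ of $x$, the integrable first-order system $\nabla s = 0$ has an $r$-dimensional space of solutions, so there is a frame $s_1, \ldots, s_r$ of $\nabla$-parallel sections, which are linearly independent at every point of $U$ because parallel transport is an isomorphism.

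Next I would verify that this parallel frame already has the two required features, up to the normalization at $x$. Since the $(0,1)$-part of the Chern connection is $\bar\partial$, the relation $\nabla s_i = 0$ forces $\bar\partial s_i = 0$, so each $s_i$ is holomorphic and $s_1, \ldots, s_r$ is a holomorphic frame on $U$. Moreover, because $\nabla$ is a metric connection, compatibility with $h$ gives
$$
d\langle s_i, s_j\rangle_h = \langle \nabla s_i, s_j\rangle_h + \langle s_i, \nabla s_j\rangle_h = 0,
$$
so every function $\langle s_i, s_j\rangle_h$ is constant on $U$.

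Finally I would match the prescribed basis. As $s_1(x), \ldots, s_r(x)$ is a basis of $E_x$, there is a unique invertible constant matrix $C = (c_{ij})$ with $e_{j,x} = \sum_i c_{ij} s_i(x)$. Setting $e_j := \sum_i c_{ij} s_i$ yields a holomorphic frame near $x$ with $e_j(x) = e_{j,x}$; being a constant linear combination of parallel sections it is again $\nabla$-parallel, so $\langle e_i, e_j\rangle_h = \sum_{k,l} c_{ki}\,\overline{c_{lj}}\,\langle s_k, s_l\rangle_h$ is constant on $U$. This is precisely the asserted frame. (Note that one cannot in general arrange $\langle e_i, e_j\rangle_h = \delta_{ij}$, since the prescribed basis $e_{1,x}, \ldots, e_{r,x}$ need not be orthonormal; "constant" is exactly the right flexibility.)

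The only genuine content lies in the first step — producing the parallel frame from vanishing curvature — and that is where I would be most careful, making sure to use that $\sqrt{-1}\Theta_h = 0$ renders $\nabla$ flat and that flatness on a contractible neighborhood integrates to a full frame of parallel sections. (Equivalently, one could start from the characterization of hermitian flat bundles by locally constant unitary transition functions, which directly supplies a holomorphic frame with constant metric; the remaining two steps are then identical.) Everything after the flat frame is produced is purely formal.
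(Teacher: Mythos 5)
Your proof is correct and follows essentially the same route as the paper's: take a parallel frame for the flat Chern connection, observe that vanishing of the $(0,1)$-part gives holomorphicity and metric compatibility gives constancy of $\langle e_i, e_j\rangle_h$, then normalize at $x$. The only difference is that you spell out the existence of the parallel frame and the constant-matrix adjustment, which the paper leaves implicit.
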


\begin{proof}
Let $D$ be the Chern connection associated to $(E,h)$.
Then, by flatness, we can take a local frame $\{e_j\}$ around $x$ such that $D e_j \equiv 0$. We can assume that $e_j(x) =e_{j,x}$.
Since $D$ is compatible with $h$, we have that $d\langle e_i, e_j \rangle_h = \{ De_i, e_j \}_h + \{ e_i, D e_j \}_h = 0$, thus $\langle e_i, e_j \rangle_h$ is constant. Moreover, taking the $(0,1)$-part of $De_j \equiv 0$, we obtain that $\bar{\partial}e_j \equiv 0 $, which shows that $e_j$ is holomorphic.
\end{proof}

\begin{theo}$($=Theorem \ref{theo-split2}$)$\label{re-theo-split2}
Let $E$ be a vector bundle with positively curved $($singular$)$ 
hermitian metric $g$
on a $($not necessarily compact$)$ complex manifold $X$. 
Let 
$$
0 \to S \to E \to Q \to 0 
$$
be an exact sequence of vector bundles on $X$. 
If the first Chern class $c_{1}(Q)$ is zero, 
the above exact sequence splits. 
\end{theo}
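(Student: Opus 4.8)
The plan is to dualize the sequence and reduce the splitting to the construction of a global holomorphic projection onto the relevant sub-bundle, producing that projection from the flat structure of the quotient metric together with the plurisubharmonicity built into the definition of positive curvature. First I would equip $Q$ with the quotient metric $g_Q$ induced by $g$; dually this means viewing $Q^\vee$ as a sub-bundle of $E^\vee$ on which $g^\vee$ restricts to $g_Q^\vee$. Since $g$ is positively curved, $\log|u|_{g^\vee}$ is psh for every local holomorphic section $u$ of $E^\vee$, and in particular $g_Q$ is positively curved. Because $c_1(Q)=0$ forces $\sqrt{-1}\Theta_{\det g_Q}=0$, the local argument of Lemma \ref{lem-vector}(1) (through Raufi's Lemma \ref{lemm-Raufi}) applies and shows that $(Q,g_Q)$ is hermitian flat on all of $X$, where $X_Q=X$ as $Q$ is a bundle. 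Applying Lemma \ref{lem-flat} to $(Q^\vee,g_Q^\vee)$, around each point I obtain a local holomorphic frame $u_1,\dots,u_s$ of $Q^\vee$ whose Gram matrix $c_{ij}:=\langle u_i,u_j\rangle_{g_Q^\vee}=\langle u_i,u_j\rangle_{g^\vee}$ is constant.

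The heart of the argument is to show that for every local holomorphic section $v$ of $E^\vee$ the function $a:=\langle v,u_j\rangle_{g^\vee}$ is holomorphic. I would consider the family $s+\lambda v$ with $s:=u_j$ and $\lambda\in\mathbb{C}$, a holomorphic section of the pullback of $E^\vee$ to $X\times\mathbb{C}$; since positivity of a singular metric is preserved under pullback by the projection, the function $\log|s+\lambda v|^2_{g^\vee}$ is psh in $(z,\lambda)$. Writing $|s+\lambda v|^2_{g^\vee}=m+2\,\mathrm{Re}(\lambda a)+|\lambda|^2 b$, where $m:=|s|^2_{g^\vee}$ is \emph{constant} by the flat-frame choice and $b:=|v|^2_{g^\vee}$, a direct computation of the complex Hessian at $\lambda=0$ shows that the mixed $(z,\lambda)$-block has vanishing $(z,\bar z)$-diagonal entry, while its off-diagonal entries are proportional to $\dbar a$. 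Positive semidefiniteness of a Hermitian matrix with a zero diagonal entry forces the corresponding off-diagonal entries to vanish, so $\dbar a=0$ and $a=\langle v,u_j\rangle_{g^\vee}$ is holomorphic. Consequently the $g^\vee$-orthogonal projection onto $Q^\vee$, which in the frame is given by $\rho(v):=\sum_{i,j}\langle v,u_j\rangle_{g^\vee}(c^{-1})_{ji}\,u_i$, is a holomorphic bundle map restricting to the identity on $Q^\vee$. Being the orthogonal projection, $\rho$ does not depend on the chosen frame, so the local maps glue to a global holomorphic projection $E^\vee\to Q^\vee$; its kernel is a holomorphic complement of $Q^\vee$, which splits the dual sequence and hence, after dualizing, the original one.

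The main obstacle is precisely that $g$ is only a singular metric, so the classical route — comparing the curvature of the sub-bundle $Q^\vee$ with that of $E^\vee$ and deducing that the second fundamental form vanishes — is not directly available, since $\sqrt{-1}\Theta_{g^\vee}$ need not be defined. The device above circumvents this: it extracts exactly the information encoded in the vanishing of the second fundamental form, namely the holomorphicity of $\langle v,u_j\rangle_{g^\vee}$, from plurisubharmonicity alone, the flat frames serving only to make $|u_j|^2_{g^\vee}$ constant. A secondary technical point to check carefully is the flatness of $g_Q$ on the possibly non-compact $X$, i.e. that the hypothesis $c_1(Q)=0$ indeed yields $\sqrt{-1}\Theta_{\det g_Q}=0$ so that Lemma \ref{lemm-Raufi} can be invoked; this is where $c_1(Q)=0$ enters the proof.
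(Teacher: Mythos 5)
Your proposal is correct and follows essentially the same route as the paper's proof: dualize the sequence, use the hermitian flatness of $Q^{\vee}$ with the induced metric (Lemma \ref{lem-vector}) and the flat frames of Lemma \ref{lem-flat}, and derive the splitting from the holomorphicity of the pairings $\langle v, u_j\rangle_{g^{\vee}}$ of arbitrary holomorphic sections against the flat frame. The only differences are in packaging: the paper cites \cite[Proposition 5.2]{Hos17} for that holomorphicity, whereas your $X\times\mathbb{C}$ plurisubharmonicity argument amounts to a proof of it (modulo the routine point that, $g^{\vee}$ being singular, the Hessian computation at $\lambda=0$ must be carried out distributionally rather than pointwise), and the paper concludes by exhibiting the extension class as an explicit \v{C}ech coboundary, while you assemble the same local data into a global holomorphic orthogonal projection $E^{\vee}\to Q^{\vee}$.
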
 

\begin{proof}[Proof of Theorem \ref{re-theo-split2}]
The following proof is a generalization of \cite[Theorem 5.1]{Hos17}.
We will work on dual bundles. By taking the dual, we have the following exact sequence
\begin{equation}\label{dual-split}
0 \to Q^{\vee} \to E^{\vee} \to S^{\vee} \to 0. 
\end{equation} 
Then we have a negatively curved singular hermitian metric $h^{\vee}$ whose restriction to $Q^{\vee}$ is flat by (the dual of) Lemma \ref{lem-vector} (1). Therefore, by Lemma \ref{lem-flat}, 
we can take a holomorphic orthonormal frame $(\kappa^\alpha_1, \ldots, \kappa^\alpha_q)$ of $Q^{\vee}$ on a small open set $U^\alpha$. Let $\epsilon^\alpha_j$ be the image of $\kappa^\alpha_j$ in $E^{\vee}$. Take $\epsilon^\alpha_{q+1}, \ldots, \epsilon^\alpha_{q+s}$ such that $(\epsilon^\alpha_1, \ldots, \epsilon^\alpha_{q+s})$ is a local frame of $E^{\vee}$. Let $\sigma^\alpha_j$ be the image of $\epsilon^\alpha_j$ in $S^{\vee}$. We remark that 
$(\sigma^\alpha_{q+1}, \ldots, \sigma^\alpha_{q+s})$ is a local frame of $S^{\vee}$.
We will write the transition function of $Q^{\vee}$ and $S^{\vee}$ as follows:
$$
\begin{array}{ccc}
\kappa^\alpha_1 &= &\Phi^{Q^{\vee}, \alpha \beta}_{1,1} \kappa^{\beta}_1 + \cdots + \Phi^{Q^{\vee}, \alpha \beta}_{1,q} \kappa^{\beta}_q, \\
&\vdots&\\
\kappa^\alpha_q &= &\Phi^{Q^{\vee}, \alpha \beta}_{q,1} \kappa^{\beta}_1 + \cdots + \Phi^{Q^{\vee}, \alpha \beta}_{q,q} \kappa^{\beta}_q, \\
\vspace{2mm}\\
\sigma^\alpha_{q+1} &=& \Phi^{S^{\vee}, \alpha \beta}_{q+1, q+1} \sigma^{\beta}_{q+1} + \cdots + \Phi^{S^{\vee}, \alpha \beta}_{q+1, q+s} \sigma^{\beta}_{q+s},\\
&\vdots&\\
\sigma^\alpha_{q+s} &=& \Phi^{S^{\vee}, \alpha \beta}_{q+s, q+1} \sigma^{\beta}_{q+1} + \cdots + \Phi^{S^{\vee}, \alpha \beta}_{q+s, q+s} \sigma^{\beta}_{q+s}.\\
\end{array}
$$
The transition functions for $E^\vee$ can be written as
$$
\begin{array}{ccccccccl}
    \epsilon^\alpha_1 &=& \Phi^{Q^{\vee},\alpha\beta}_{11} \epsilon^\beta_1 &+& \cdots &+& \Phi^{Q^{\vee},\alpha\beta}_{1q} \epsilon^\beta_q,&&\\
    &\vdots &&&&&&&\\
    \epsilon^\alpha_q &=& \Phi^{Q^{\vee},\alpha\beta}_{q1} \epsilon^\beta_1 &+& \cdots &+& \Phi^{Q^{\vee},\alpha\beta}_{qq} \epsilon^\beta_q,&& \vspace{2mm}\\
    \epsilon^\alpha_{q+1} &=& \Phi^{E^{\vee},\alpha\beta}_{q+1,1} \epsilon^\beta_1 &+& \cdots &+& \Phi^{E^{\vee},\alpha\beta}_{q+1,q} \epsilon^\beta_q &+&  \Phi^{S^{\vee},\alpha\beta}_{q+1,q+1} \epsilon^\beta_{q+1} + \cdots + \Phi^{S^{\vee},\alpha\beta}_{q+1,q+s} \epsilon^\beta_r,\\
    &\vdots&&&&&&&\\
    \epsilon^\alpha_{q+s} &=& \Phi^{E^{\vee},\alpha\beta}_{q+s,1} \epsilon^\beta_1 &+& \cdots &+& \Phi^{E^{\vee},\alpha\beta}_{q+s,q} \epsilon^\beta_q &+&  \Phi^{S^{\vee},\alpha\beta}_{q+s,q+1} \epsilon^\beta_{q+1} + \cdots + \Phi^{S^{\vee},\alpha\beta}_{q+s,q+s} \epsilon^\beta_{q+s}.\\
    \end{array}
$$
For short we will write the coefficient matrix as 
$$\Phi^{E^{\vee}, \alpha\beta} = 
    \begin{pmatrix}
    \Phi^{Q^{\vee},  \alpha\beta} & 0\\
    \Psi^{ \alpha\beta} & \Phi^{S^{\vee},  \alpha\beta}\\
    \end{pmatrix}.$$
    
Next, let $h^\alpha$ be the matrix
$$h^\alpha := 
    \begin{pmatrix}
    \langle \epsilon_1^\alpha, \epsilon_1^\alpha \rangle_h & \langle \epsilon_1^\alpha, \epsilon_2^\alpha \rangle_h & \cdots & \langle \epsilon_1^\alpha, \epsilon_{q+s}^\alpha \rangle_h \\
    \vdots & \ddots & & \vdots \\
    \langle \epsilon_{q+s}^\alpha, \epsilon_1^\alpha \rangle_h & \cdots & & \langle \epsilon_{q+s}^\alpha, \epsilon_{q+s}^\alpha \rangle_h\\
    \end{pmatrix}. $$
Note that the upper-left $q \times q$ -matrix is constant by the choice of $\epsilon^\alpha_1, \ldots, \epsilon^\alpha_q$.
Since $h$ is negatively curved, by \cite[Proposition 5.2]{Hos17}, coefficients of the lower-left $s \times q$-matrix is holomorphic (say $\phi^\alpha$). Then we can write as
$$ h^\alpha = 
    \begin{pmatrix}
    C^\alpha & \overline{\phi^\alpha}\\
    \phi^\alpha & *
    \end{pmatrix},
    $$
where $C^\alpha$ is a $q \times q$-matrix whose coefficients are constant on $U^\alpha$.
By the equality 
$$ h^\alpha = \Phi^{E^{\vee}, \alpha\beta} h^\beta  (\overline{{}^t\Phi^{E^{\vee}, \alpha\beta} }), $$
we have 
\begin{align*}   
   C^\alpha &= \Phi^{Q^{\vee}, \alpha \beta} C^\beta (\overline{{}^t \Phi^{Q^{\vee}, \alpha \beta}}), \\
  \phi^\alpha &= \Psi^{\alpha \beta} C^\beta  (\overline{{}^t\Phi^{Q^{\vee}, \alpha \beta}}) + \Phi^{S^{\vee}, \alpha \beta} \phi^\beta (\overline{{}^t\Phi^{Q^{\vee}, \alpha \beta}}). 
\end{align*}
From these equalities, it follows that
$$\phi^\alpha (C^\alpha)^{-1} = \Psi^{\alpha\beta}(\Phi^{Q^{\vee}, \alpha\beta})^{-1} + \Phi^{S^{\vee}, \alpha\beta} \phi^\beta (C^\beta)^{-1} (\Phi^{Q^{\vee}, \alpha\beta})^{-1}.$$

On the other hand, the extension class of the given exact sequence can be calculated as the cohomology class of the \v{C}ech 1-cocycle
$$\left\{\sum_{\lambda = q+1}^{q+s} \sum_{\mu=1}^{q} \Psi^{\alpha\beta}_{\lambda,\mu} \kappa^\beta_\mu \otimes (\sigma^\alpha_\lambda)^{\vee}  \in H^0(U_{\alpha\beta}, \mathcal{O}(Q^{\vee} \otimes S)) \right\}_{\alpha\beta}$$
$$= \left\{\sum_{\lambda = q+1}^{q+s} \sum_{\mu=1}^{q} \sum_{\nu=1}^{q} \Psi^{\alpha\beta}_{\lambda,\mu}((\Phi^{Q^{\vee},\alpha\beta})^{-1})_{\mu\nu} \kappa^\alpha_\nu \otimes (\sigma^\alpha_\lambda)^{\vee} \right\}_{\alpha\beta}.$$
It is the differential of the following \v{C}ech 0-cochain
$$\left\{ \sum_{\nu=1}^{q} \sum_{\lambda = q+1}^{q+s} (\phi^\alpha (C^\alpha)^{-1})_{\lambda \nu} \kappa^\alpha_\nu \otimes (\sigma^\alpha_\lambda)^{\vee}  \in H^0(U_\alpha, \mathcal{O}(Q^{\vee}\otimes S))
    \right\}_\alpha,$$
thus the extension class is zero. 
Therefore the given sequence (\ref{dual-split}) splits.
\end{proof}

\begin{theo}[=Theorem \ref{theo-split}]\label{re-theo-split}
Let $X$ be a  compact complex manifold and 
let 
$$
0 \to \mathcal{S} \to \mathcal{E} \to \mathcal{Q} \to 0 
$$
be an exact sequence of reflexive coherent sheaves $\mathcal{S}$, $\mathcal{E}$, and $\mathcal{Q}$ on $X$. 
If $ \mathcal{E}$ admits a positively curved $($singular$)$ hermitian metric 
and the first Chen class $c_1(\mathcal{Q})=0$, 
then we have:
\begin{itemize}
\item[(1)] $\mathcal{Q}$ is locally free and hermitian flat.  
\item[(2)] $\mathcal{E} \to \mathcal{Q}$ is a surjective bundle morphism on $X_\mathcal{E}$. 
\item[(3)] The above exact sequence splits on $X$. 
\end{itemize}
\end{theo}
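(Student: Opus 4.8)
The plan is to establish the three conclusions in order, using the positively curved metric on $\mathcal{E}$ to feed Lemma \ref{lem-vector} for (1) and Theorem \ref{re-theo-split2} for (3), and treating (2) as an intermediate bundle-theoretic statement. Throughout I would use that $\mathcal{S}$, $\mathcal{E}$, $\mathcal{Q}$ are reflexive, so that $X \setminus X_{\mathcal{E}}$ is an analytic set of codimension $\geq 3$, and I would invoke the K\"ahler hypothesis (as in Theorem \ref{theo-split}) wherever Lemma \ref{lem-vector}(2) is needed.

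For (1), I would equip the quotient $\mathcal{Q}$ with the quotient metric $g_{\mathcal{Q}}$ induced by the given positively curved metric $g$ on $\mathcal{E}$, and check it is positively curved. The cleanest way is to dualize: the surjection $\pi\colon\mathcal{E}\to\mathcal{Q}$ dualizes to an inclusion $\mathcal{Q}^{\vee}\hookrightarrow\mathcal{E}^{\vee}$, and by construction $g_{\mathcal{Q}}^{\vee}$ is the restriction of $g^{\vee}$ to $\mathcal{Q}^{\vee}$. Since $g$ is positively curved, $\log|u|_{g^{\vee}}$ is plurisubharmonic for every local section $u$ of $\mathcal{E}^{\vee}$, in particular for sections of the subsheaf $\mathcal{Q}^{\vee}$; hence $g_{\mathcal{Q}}$ is positively curved. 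This metric is defined a priori on $X_{\mathcal{E}}$, but its local weights are plurisubharmonic, so it extends across $X_{\mathcal{Q}}\setminus X_{\mathcal{E}}$ (a set of codimension $\geq 2$) to a positively curved singular hermitian metric on $\mathcal{Q}$. Together with $c_1(\mathcal{Q})=0$ and the K\"ahler hypothesis, Lemma \ref{lem-vector}(2) then yields that $\mathcal{Q}$ is locally free and hermitian flat, which is (1). For (2), I would note that by (1) the sheaf $\mathcal{Q}$ is locally free everywhere and $\mathcal{E}$ is locally free on $X_{\mathcal{E}}$; the sheaf surjection $\pi$ is surjective on stalks, hence surjective on fibers by Nakayama's lemma, so it is a surjective bundle morphism on $X_{\mathcal{E}}$ whose kernel $\mathcal{S}|_{X_{\mathcal{E}}}$ is automatically a subbundle. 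In particular all three sheaves are locally free on $X_{\mathcal{E}}$ and form an exact sequence of vector bundles there.

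For (3), restricting to $X_{\mathcal{E}}$ I would have an exact sequence of vector bundles $0\to\mathcal{S}\to\mathcal{E}\to\mathcal{Q}\to 0$ in which $\mathcal{E}|_{X_{\mathcal{E}}}$ carries the positively curved singular hermitian metric $g$ and $c_1(\mathcal{Q})=0$. Theorem \ref{re-theo-split2}, which does not require compactness, applies on the (non-compact) manifold $X_{\mathcal{E}}$ and produces a splitting $s\colon\mathcal{Q}\to\mathcal{E}$ on $X_{\mathcal{E}}$ with $\pi\circ s=\mathrm{id}_{\mathcal{Q}}$. To globalize, I would observe that $\mathcal{Q}^{\vee}\otimes\mathcal{E}$ (the sheaf of homomorphisms from $\mathcal{Q}$ to $\mathcal{E}$) is reflexive, since $\mathcal{Q}^{\vee}$ is locally free and $\mathcal{E}$ is reflexive; hence the section $s\in H^{0}(X_{\mathcal{E}},\mathcal{Q}^{\vee}\otimes\mathcal{E})$ extends uniquely to $\tilde s\in H^{0}(X,\mathcal{Q}^{\vee}\otimes\mathcal{E})$ across the codimension $\geq 2$ set $X\setminus X_{\mathcal{E}}$. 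The relation $\pi\circ\tilde s=\mathrm{id}_{\mathcal{Q}}$ holds on the dense open set $X_{\mathcal{E}}$, hence on all of $X$ because $\mathcal{Q}^{\vee}\otimes\mathcal{Q}$ is torsion free; therefore $\tilde s$ splits the sequence globally.

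I expect the crux to be conclusion (1), specifically verifying that the quotient metric is genuinely positively curved in the singular sense and that it descends to a well-defined metric on $\mathcal{Q}$ across the locus where $\mathcal{E}$ fails to be locally free; once (1) is in place, conclusions (2) and (3) are essentially formal, with (3) reducing to Theorem \ref{re-theo-split2} on $X_{\mathcal{E}}$ plus a reflexive extension of the splitting morphism across a codimension $\geq 2$ set. A secondary point to handle carefully is that the splitting produced on $X_{\mathcal{E}}$ is only a priori a holomorphic morphism there, so its extension to $X$ must rely on the reflexivity of $\mathcal{Q}^{\vee}\otimes\mathcal{E}$ rather than on any metric property, and the verification $\pi\circ\tilde s=\mathrm{id}_{\mathcal{Q}}$ must be propagated from $X_{\mathcal{E}}$ by torsion-freeness.
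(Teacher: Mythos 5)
Your proposal is correct and follows essentially the same route as the paper: conclusion (1) via the quotient-metric construction (which the paper leaves implicit) feeding Lemma \ref{lem-vector}, and conclusion (3) by applying Theorem \ref{re-theo-split2} on the non-compact manifold $X_{\mathcal{E}}$ and then extending the splitting across the codimension $\geq 2$ set $X \setminus X_{\mathcal{E}}$ by reflexivity. The only cosmetic differences are that you obtain (2) from Nakayama's lemma where the paper cites Lemma \ref{lemm-nonvanish}, and that you globalize the splitting by extending the section $s$ of the reflexive sheaf $\mathcal{Q}^{\vee}\otimes\mathcal{E}$ whereas the paper takes the double dual of the pushforward of the direct-sum decomposition; both rest on the same normality property of reflexive sheaves.
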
 

\begin{proof}[Proof of Theorem \ref{re-theo-split}]
The conclusion (1) follows from Lemma \ref{lem-vector} and 
the conclusion (2) follows from Lemma \ref{lemm-nonvanish}. 
Also, from Theorem \ref{re-theo-split2}, it follows that there exists 
a bundle morphism $j:\mathcal{Q} \to \mathcal{E}$ on $X_{\mathcal{E}}$
such that 
$$
\mathcal{E}=\mathcal{S} \oplus j(\mathcal{Q}) \text{ on } X_{\mathcal{E}}. 
$$
By taking the pushforward $i_{*}$ by the natural inclusion 
$i: X_{\mathcal{E}} \to X$ and the double dual, 
we obtain 
$$
(i_*\mathcal{E})^{\vee \vee}=
(i_*\mathcal{S})^{\vee \vee} \oplus (i_*j(\mathcal{Q}))^{\vee \vee} \text{ on } X. 
$$
By $\codim (X \setminus X_{\mathcal{E}}) \geq 3$ and 
the reflexivity, 
we have $\mathcal{E} \cong (i_*\mathcal{E})^{\vee \vee}$, 
$\mathcal{S} \cong (i_*\mathcal{S})^{\vee \vee}$, and 
$\mathcal{Q} \cong (i_*j(\mathcal{Q}))^{\vee \vee}$. 
This finishes the proof. 
\end{proof}

\subsection{Pseudo-effective tangent bundles}\label{Sec-3-2}

This subsection is devoted to the proof of Theorem \ref{theo-posi}. 

\begin{theo}[=Theorem \ref{theo-posi}]\label{re-theo-posi}
Let $X$ be a projective manifold with pseudo-effective tangent bundle. 
Then $X$ admits a morphism $\phi: X \to Y$ with connected fiber to 
a smooth manifold $Y$ with the following properties\,$:$
\begin{itemize}
\item[(1)] The morphism $\phi: X \to Y$ is smooth $($that is, all the fibers are smooth$)$. 
\item[(2)] The image $Y$ admits a finite \'etale cover $A \to Y$ by an abelian variety $A$. 
\item[(3)] A general fiber $F$ of $\phi$ is rationally connected. 
\item[(4)] A general fiber $F$ of $\phi$ also has the pseudo-effective tangent bundle. 
\end{itemize}
Moreover, if we further assume that 
$T_X$ admits a positively curved singular hermitian metric, 
then 
\begin{itemize}
\item[(5)] The following exact sequence splits: 
$$
0 \longrightarrow T_{X/Y} 
\longrightarrow T_X \longrightarrow \phi^* T_Y \longrightarrow 0. 
$$
\item[(6)] The morphism $\phi: X \to Y$ is locally trivial $($that is, all the fibers are smooth and isomorphic$)$. 
\end{itemize}
\end{theo}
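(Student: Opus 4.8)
The plan is to take $\phi\colon X\dashrightarrow Y$ to be the MRC fibration of $X$, following the strategy of \cite{Mat18}. By construction a general fiber is rationally connected, which gives (3), and by Graber--Harris--Starr the base $Y$ is not uniruled. The first step is to descend the positivity of $T_X$ to $Y$: on the locus where $\phi$ is a submersion, $\phi^*T_Y$ is a quotient of $T_X$, and since quotients of pseudo-effective sheaves are pseudo-effective and $\phi$ is surjective, $T_Y$ is pseudo-effective; in particular $-K_Y=\det T_Y$ is pseudo-effective. On the other hand, $Y$ being non-uniruled forces $K_Y$ to be pseudo-effective by \cite{BDPP}. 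Hence $K_Y\equiv 0$, i.e. $c_1(Y)=0$. Now Theorem~\ref{theo-vect} applies to $T_Y$ (pseudo-effective with vanishing first Chern class), so $T_Y$ is numerically flat, and the structure theorem for numerically flat tangent bundles \cite{DPS94} shows that $Y$ is smooth and admits a finite \'etale cover by an abelian variety, giving (2).

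The geometric heart is the smoothness statement (1). The point is to run Lemma~\ref{lemm-nonvanish}(2) on the inclusion $\phi^*\Omega_Y\hookrightarrow\Omega_X$ coming from the differential. Here $\mathcal{E}=T_X$ is pseudo-effective, hence almost nef, with $\mathcal{E}^\vee=\Omega_X$, and $\mathcal{S}=\phi^*\Omega_Y$ satisfies $\det\mathcal{S}=\phi^*K_Y\equiv 0$, which is pseudo-effective. Since $T_X$ is locally free we have $X_{\mathcal{E}}=X$, so the lemma forces $\phi^*\Omega_Y\to\Omega_X$ to be an injective bundle morphism on all of $X$. Dualizing, $T_X\to\phi^*T_Y$ is a surjective bundle morphism everywhere, i.e. $d\phi$ has everywhere maximal rank and $\phi$ is a submersion. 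This simultaneously excludes indeterminacy of the a priori only rational map $\phi$ (its indeterminacy locus has codimension $\geq 2$, across which the sheaf inclusion extends) and yields that $\phi$ is an everywhere-defined smooth morphism. Promoting the MRC map to a genuine morphism and extending the cotangent inclusion over the indeterminacy locus is the main obstacle, and it is exactly where the positivity theory of Lemma~\ref{lemm-nonvanish} is used. Property (4) then follows by restricting to a general fiber $F$: the restriction $T_X|_F$ stays pseudo-effective, and since $\phi^*T_Y|_F$ is trivial the exact sequence $0\to T_F\to T_X|_F\to\mathcal{O}_F^{\oplus\dim Y}\to 0$ lets one descend pseudo-effectivity to $T_F$.

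For the \emph{moreover} part I would work with the tangent sequence $0\to T_{X/Y}\to T_X\to\phi^*T_Y\to 0$, which by (1) is now an exact sequence of vector bundles. Since $c_1(\phi^*T_Y)=\phi^*c_1(T_Y)=0$ and $T_X$ carries a positively curved singular hermitian metric, Theorem~\ref{theo-split2} (or Theorem~\ref{theo-split} in its reflexive formulation) applies and shows the sequence splits, giving (5). Finally, for local triviality (6), the splitting produces a holomorphic horizontal subbundle $\mathcal{H}\cong\phi^*T_Y\subset T_X$ complementary to the vertical bundle $T_{X/Y}$; pulling back along the \'etale cover $A\to Y$ by the abelian variety, where $T_A$ is trivial and the base carries a flat structure, one checks that $\mathcal{H}$ defines an integrable, flat Ehresmann connection for $\phi$. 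Parallel transport along it identifies the fibers and produces local product charts, so $\phi$ is locally trivial. Verifying integrability of $\mathcal{H}$ and running the Ehresmann argument against the flat structure of $Y$ is the remaining point to check here.
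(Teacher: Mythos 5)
Your overall strategy is the paper's (MRC fibration, Lemma~\ref{lemm-nonvanish} plus Theorem~\ref{theo-vect} to control the quotient, \cite{BDPP} and \cite{GHS03} for pseudo-effectivity of $K_Y$, Beauville--Bogomolov for (2), Theorem~\ref{theo-split} for (5), Ehresmann for (6)), but there is a genuine gap at the crucial step (1). What Lemma~\ref{lemm-nonvanish}~(2) gives you is exactly what it states: the inclusion $\mathcal{Q}^{\vee} \hookrightarrow \Omega_X$ (with $\mathcal{Q}=(\pi_*\bar\phi^*\Omega_Y)^\vee$, the globally defined extension of $\phi^*T_Y$) is a bundle inclusion, equivalently $T_X \to \mathcal{Q}$ is a surjective bundle morphism, so its kernel $\mathcal{S}$ is a subbundle of $T_X$ extending $T_{X/Y}$ across the indeterminacy locus. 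This does \emph{not} make the rational map $\phi$ into a morphism: a distribution, or even a foliation, defined on all of $X$ does not come equipped with values of $\phi$ at the indeterminacy points. Your claim that the bundle-morphism property ``simultaneously excludes indeterminacy'' is precisely the unproved step. The paper fills it as follows: it checks that $\mathcal{S}$ is integrable (because it agrees with $T_{X/Y}$ on a dense open set, cf.\ \cite{Mat18}) and that its general leaf is compact and rationally connected (because the MRC fibration is almost holomorphic), and then invokes H\"oring's result \cite[Corollary 2.11]{Hor07} on foliations with rationally connected leaves, which is the theorem that actually converts such a foliation into a holomorphic \emph{smooth} fibration. Without this, or an equivalent result on rationally connected foliations, your argument stops at ``$T_{X/Y}$ extends to a subbundle of $T_X$,'' which is strictly weaker than (1).

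Three smaller points. For (4), you assert that $T_X|_F$ ``stays pseudo-effective'' for a general fiber $F$; restriction of a pseudo-effective bundle to a subvariety is not automatic, and the paper supplies the needed argument: the image $f(B)\subset X$ of the non-nef locus $B\subset \mathbb{P}(T_X)$ of $\mathcal{O}_{\mathbb{P}(T_X)}(1)$ is a proper subvariety, so for general $F$ the non-nef locus of $\mathcal{O}_{\mathbb{P}(T_X|_F)}(1)$ does not dominate $F$. For (2), your descent ``$\phi^*T_Y$ pseudo-effective and $\phi$ surjective $\Rightarrow$ $T_Y$ pseudo-effective'' is invoked while $\phi$ is still only a rational map defined on an open set; pseudo-effectivity is a global condition and does not obviously descend through such data. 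The paper sidesteps this by working with the sheaf $\mathcal{Q}$ on $X$, proving $c_1(\mathcal{Q})=0$ there (psef from the generically surjective map $T_X\to\mathcal{Q}$, anti-psef from $K_Y$ psef), applying Theorem~\ref{theo-vect} on $X$, and only descending \emph{numerical flatness} to $Y$ after (1) makes $\phi$ an honest surjective morphism. Finally, for (6), the integrability of the horizontal subbundle that you flag as ``the remaining point to check'' is not actually needed: the holomorphic Ehresmann argument (complex flows along horizontal lifts of coordinate vector fields, which exist for uniform time by properness, cf.\ \cite[Lemma 3.19]{Hor07}) yields local triviality from any holomorphic splitting of the tangent sequence.
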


\begin{proof}[Proof of Theorem \ref{re-theo-posi}]

For a projective manifold $X$ with the pseudo-effective tangent bundle $T_X$, 
we consider an MRC fibration $\phi: X \dashrightarrow Y$ to a projective manifold $Y$, 
and take a resolution $\pi: \bar X \to X$ of the indeterminacy locus of $\phi$. 
Here we have the following commutative diagram$:$
\begin{equation*}
\xymatrix@C=40pt@R=30pt{
 & \bar X  \ar[d]_\pi \ar[rd]^{\bar{\phi}\ \ }  & \\ 
& X \ar@{-->}[r]^{\phi \ \ \ }  &  Y.\\   
}
\end{equation*}

(1) To prove the conclusion (1) (and also (3)) by using \cite[Corollary 2.11]{Hor07}, 
we will construct a foliation on $X$ (that is, an integrable subbundle of $T_X$) whose general leaf is rationally connected. 
We will show that the relative tangent bundle $T_{X/Y} \subset T_X$ 
(which is defined only on a Zariski open set of $X$) can be extended to a subbundle of $T_X$ on $X$. 
If it can be shown, it is not so difficult to check that this subbundle is integrable and its general leaf is rationally connected 
(that is, all the assumptions in \cite[Corollary 2.11]{Hor07} are satisfied).

Now we have the exact sequence of coherent sheaves 
\begin{align*}
0 \longrightarrow {\bar\phi}^{*} \Omega_Y \longrightarrow \Omega_{\bar X} 
\longrightarrow \Omega_{\bar X/Y}:=\Omega_{\bar X}/{\bar\phi}^{*} \Omega_Y \longrightarrow 0. 
\end{align*}
Then we obtain the injective sheaf morphism 
$0 \to \pi_*{\bar\phi}^{*} \Omega_Y \to \Omega_{X} $ 
by taking the pushforward. 
Here we used the formula $\pi_*\Omega_{\bar X}=\Omega_{X}$. 
By taking the dual, 
we obtain the exact sequence 
\begin{align}\label{eq-1}
0 \longrightarrow \mathcal{S}:= \Ker r \longrightarrow 
T_X \xrightarrow{\quad r \quad} \mathcal{Q}:= (\pi_*{\bar\phi}^{*} \Omega_Y)^{\vee}. 
\end{align}
We remark that the above sequence corresponds to 
the standard exact sequence of tangent bundles 
on a Zariski open set where $\phi$ is a smooth morphism. 

The morphism $r$ is generically surjective, 
and thus the reflexive sheaf $Q$ is also pseudo-effective. 
In particular, the first Chern class $c_{1}(\mathcal{Q})$ is also pseudo-effective. 
On the other hand, it follows that 
the image $Y$ of MRC fibrations has the pseudo-effective 
canonical bundle $K_Y$ from \cite{BDPP} and \cite{GHS03}. 
Further $\mathcal{Q}$ coincides
with the usual pullback of $T_{Y}$ on $X_0$. 
Here $X_0$ is the maximal Zariski open set where $\phi$ is a  morphism. 
Hence, by $\codim (X \setminus X_0) \geq 2$, 
it can be shown that 
$$
-c_{1}(\mathcal{Q})=c_{1}(\pi_*{\bar\phi}^{*}\Omega_Y)
=c_{1}(\pi_*{\bar\phi}^{*}K_Y)
$$
is pseudo-effective.

By the above argument, we can see that 
$\mathcal{Q}$ is a pseudo-effective reflexive sheaf 
with $c_1(\mathcal{Q})=0$, and thus we can conclude that $\mathcal{Q}$ 
is a numerically flat vector bundle on $X$ by Theorem \ref{theo-vect}.
By applying Lemma \ref{lemm-nonvanish} to $0\to \mathcal{Q}^\vee \to \Omega_{X}$
induced by (\ref{eq-1}), 
it can be seen that the sequence (\ref{eq-1}) is a bundle morphism on $X$. 
In particular, we can see that $\phi$ is smooth on $X_0$ 
(since the sequence (\ref{eq-1}) is not a bundle morphism on the non-smooth locus of $\phi$). 
The subbundle $\mathcal{S}$ defined by the kernel corresponds to 
the relative tangent bundle $T_{X/Y}$ defined on $X_0$. 
Hence $\mathcal{S}$  determines the foliation on $X$
since $T_{X/Y}$ is integrable on $X_0$ (for example, see \cite[subsection 2.2]{Mat18}). 
Further, its general leaf is rationally connected. 
Indeed, there exists a Zariski open set $Y_1$ in $Y$ such that 
$\phi: X_1:=\phi^{-1}(Y_1) \to Y_1$ is a proper morphism 
since $\phi: X \dashrightarrow Y$ is an almost holomorphic map 
(that is, general fibers are compact). 
A general leaf of $\mathcal{S}$ corresponds to a general fiber of $\phi$ 
by $\mathcal{S}=T_{X/Y}$ on $X_1$, 
and thus it is rationally connected. 
Therefore we can choose an MRC fibration to be holomorphic and smooth by \cite[Corollary 2.11]{Hor07}. 
We use the same notation $\phi: X \to Y$ for the smooth MRC fibration. 

\smallskip
(2) By (1), we have the standard exact sequence  
$$
0 \longrightarrow T_{X/Y} 
\longrightarrow T_X \longrightarrow \phi^* T_Y \longrightarrow 0,
$$
and also we have already checked that $\phi^* T_Y$ is pseudo-effective and $c_1(\phi^* T_Y)=0$. 
The pull-back $\phi^{*} T_Y $ is numerically flat by Theorem \ref{theo-vect}, 
and thus $T_Y$ is also numerically flat. 
The Beauville-Bogomolov decomposition (see \cite{Bea83}) asserts that  
there exists a finite \'etale cover $Y' \to Y$ such that 
$Y'$ is the product of hyperk\"ahler manifolds, Calabi-Yau manifolds, and abelian varieties. 
Let $Z$ be a component of $Y'$ of hyperk\"ahler manifolds or Calabi-Yau manifolds. 
We remark that $T_Z$ is also numerically flat. 
In general, numerically flat vector bundles are local systems (for example see \cite{DPS94}). 
Hence $T_Z$ should be a trivial vector bundle on $Z$ since $Z$ is simply connected and $T_Z$ is also numerically flat. 
This is a contradiction to the definition of hyperk\"ahler manifolds or Calabi-Yau manifolds. 
Hence the image $Y$ admits a finite \'etale cover $A\to Y$ by an abelian variety $A$. 

\smallskip

(4) We prove the conclusion (4). 
By considering the restriction of the standard exact sequence of the tangent bundle 
to a general fiber $F$, 
we obtain 
\begin{align*}
0 \longrightarrow T_{X/Y}|_F=T_F \longrightarrow T_{X}|_F 
\longrightarrow \phi^{*} T_Y|_F=N_{F/X}
=\mathcal{O}_F^{\oplus m} \longrightarrow 0. 
\end{align*}
When we consider the projective space bundle $f: \mathbb{P}(T_X) \to X$ and 
the non-nef locus $B \subset \mathbb{P}(T_X)$ of $\mathcal{O}_{\mathbb{P}(T_X)}(1)$, 
it can be seen that $f(B)$ is a proper subvariety of $X$ by pseudo-effectivity of $T_X$. 
By considering the commutative diagram 
\[
   \xymatrix{
    \mathbb{P}(T_X|_F)  \ar@{^{(}-_>}[r] \ar[d]^f & \mathbb{P}(T_X) \ar[d]^f   \\
    F \ar@{^{(}-_>}[r] & X 
   }
\]
we can see that the image of the non-nef locus of $\mathcal{O}_{\mathbb{P}(T_X|_F)}(1)$ 
is contained in $f(B \cap F)$. 
For a general fiber $F$, the image $f(B \cap F)$ is still a proper subvariety of $F$. 
Hence $T_{X}|_F$ is pseudo-effective. 
The surjective bundle morphism 
$$
\Lambda^{m+1} (T_{X}|_F)  \to T_F
$$
induced by the above exact sequence implies that $T_F$ is pseudo-effective.

\smallskip
We finally show that the MRC fibration $\phi : X \to Y$ is locally trivial if we further assume $X$ admits a positively curved singular hermitian metric. 
Under the assumption of such a metric, 
the exact sequence of the tangent bundle splits (that is, 
$T_X \cong T_{X/Y} \oplus \phi^{*}T_Y$)
by Theorem \ref{theo-split}. 
Then, by Ehrensmann's theorem (see also \cite[Lemma 3.19]{Hor07}), 
we can see that $\phi : X \to Y$ is locally trivial. 
\end{proof}

\begin{theo}\label{theo-posi-Ka}
Let $X$ be a compact K\"ahler manifold with pseudo-effective tangent bundle and 
$\phi: X \to Y:=\Alb(X)$ be its Albanese map. 
Then the Albanese map $\phi$ is a surjective smooth morphism and 
satisfies all the conclusions in Theorem \ref{re-theo-posi} except for $(3)$ and $(6)$
by replacing an abelian variety in $(2)$ with a compact complex torus. 
\end{theo}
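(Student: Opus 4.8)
The plan is to run the argument of Theorem \ref{re-theo-posi} with the Albanese map $\phi: X \to Y = \Alb(X)$ in place of the MRC fibration, using the compact complex version of the non-vanishing lemma (Lemma \ref{lemm-nonvanish2}) throughout. Write $q := \dim \Alb(X) = h^0(X, \Omega_X^1)$. Since $Y$ is a compact complex torus, the bundles $T_Y$ and $\Omega_Y$ are trivial; in particular $\phi^* T_Y \cong \mathcal{O}_X^{\oplus q}$, and the cotangent morphism $a: \phi^* \Omega_Y \to \Omega_X = T_X^\vee$ is, after trivializing, the evaluation morphism $H^0(X, \Omega_X^1) \otimes \mathcal{O}_X \to \Omega_X$ sending a $1$-form to itself.

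First I would establish the smoothness (1) together with the surjectivity of $\phi$. Because $T_X$ is pseudo-effective and locally free (so $X_{T_X} = X$), Lemma \ref{lemm-nonvanish2}\,(1) applies to $\mathcal{E} = T_X$ and shows that every nonzero holomorphic $1$-form $\omega \in H^0(X, \Omega_X^1) = H^0(X, T_X^\vee)$ is nowhere vanishing on $X$. This forces a basis $\omega_1, \dots, \omega_q$ of $H^0(X, \Omega_X^1)$ to be pointwise linearly independent: if $\sum_i c_i \omega_i(x) = 0$ held at some $x$ for constants $c_i$ not all zero, then the nonzero form $\sum_i c_i \omega_i$ would vanish at $x$, a contradiction. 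Hence $a$ is fiberwise injective at every point, equivalently its dual $d\phi: T_X \to \phi^* T_Y$ is everywhere surjective, so $\phi$ is a smooth morphism. A holomorphic submersion is an open map, so $\phi(X)$ is open; it is also compact, hence closed, and $Y$ is connected, so $\phi$ is surjective. This proves (1) and the surjectivity, and produces the standard exact sequence $0 \to T_{X/Y} \to T_X \to \phi^* T_Y \to 0$ of bundles on $X$.

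The remaining conclusions follow quickly. For (2), the target $Y = \Alb(X)$ is itself a compact complex torus, hence its own finite \'etale cover, so the torus version of the statement is automatic; note that here the torus structure of the Albanese target makes any appeal to Theorem \ref{theo-vect} or to the Beauville--Bogomolov decomposition unnecessary. Conclusion (4) is obtained exactly as in the proof of Theorem \ref{re-theo-posi}: restricting the standard sequence to a general fiber $F$ gives $0 \to T_F \to T_X|_F \to \mathcal{O}_F^{\oplus q} \to 0$, and the non-nef-locus argument, now carried out for the pseudo-effective line bundle $\mathcal{O}_{\mathbb{P}(T_X)}(1)$ on the compact K\"ahler manifold $\mathbb{P}(T_X)$, shows $T_X|_F$, and therefore its quotient $T_F$, to be pseudo-effective. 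Finally, under the extra hypothesis that $T_X$ carries a positively curved singular hermitian metric, conclusion (5), the splitting of the standard sequence, is immediate from Theorem \ref{re-theo-split} applied to $\mathcal{E} = T_X$ and $\mathcal{Q} = \phi^* T_Y$, whose first Chern class vanishes.

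I would finish by recording why (3) and (6) are omitted. Conclusion (3) has no analogue here, since the fibers of an Albanese map are typically not rationally connected. Conclusion (6) is genuinely unavailable: in Theorem \ref{re-theo-posi} the passage from the splitting (5) to local triviality used the rigidity of rationally connected fibers (via Ehresmann and \cite[Lemma 3.19]{Hor07}), and for the general K\"ahler fibers of the Albanese map a holomorphic splitting of the tangent sequence does not force all fibers to be isomorphic. The main obstacle in the whole argument is the smoothness and surjectivity of $\phi$; everything hinges on the elementary but crucial observation that the universal non-vanishing of $1$-forms supplied by Lemma \ref{lemm-nonvanish2} upgrades to pointwise linear independence, and hence to a submersion.
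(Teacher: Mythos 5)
Your proposal is correct and takes essentially the same approach as the paper: the paper likewise reduces everything to showing that the Albanese map is a surjective smooth morphism, deducing this from Lemma \ref{lemm-nonvanish2} applied to non-trivial linear combinations of a basis of $H^{0}(X,\Omega_X)$ (citing \cite{CP91} for the submersion/surjectivity step you spell out explicitly), and then notes that all the arguments of Theorem \ref{re-theo-posi} except those for (1) and (6) carry over with the MRC fibration replaced by the Albanese map. Your observation that conclusion (2) becomes automatic because the target is already a compact complex torus is a harmless simplification of the same scheme.
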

\begin{proof}
In the proof of Theorem \ref{theo-posi}, 
the assumption of the projectivity was used only for the proof of (1) and (6). 
The other arguments except for (1) and (6) work 
even if we replace MRC fibrations with the Albanese map. 
Hence it is sufficient to prove that the Albanese map $\phi$ is a surjective smooth morphism. 
It is easy to check it. 
Indeed, for a basis $\{\eta_k\}_{k=1}^q$ of $H^{0}(X, \Omega_{X})$, 
it follows that any non-trivial linear combination of them is non-vanishing 
by Lemma \ref{lemm-nonvanish2}. 
This implies that $\phi$ is a surjective smooth morphism 
(for example see \cite{CP91}). 
\end{proof}

In \cite{DPS94}, it was proved that $X$ is a Fano manifold 
when $T_X$ is nef and $X$ is rationally connected. 
As an analog of this result, 
we suggest the following problem. 
We remark that the geometry of a general fiber $F$ in Theorem \ref{theo-posi} 
can be determined if the problem can be affirmatively solved. 

\begin{prob}\label{Fano}
If a projective manifold $X$ is rationally connected and has the pseudo-effective tangent bundle, then is the anti-canonical bundle $-K_X$ big?
\end{prob}

\section{Surfaces with pseudo-effective tangent bundle}\label{Sec4}

Toward the classification of surfaces with pseudo-effective tangent bundle, 
we study minimal ruled surfaces in subsection \ref{Sec4-1} 
and their blow-ups in subsection \ref{Sec4-2}, 
which provide interesting examples of positively curved vector bundles.

\subsection{On minimal ruled surfaces}\label{Sec4-1}
In this subsection, we consider a ruled surface $\phi: X \to C$ over a smooth curve $C$. 
When $T_X$ is pseudo-effective, 
the base $C$ should be either the projective line or an elliptic curve by Theorem \ref{theo-posi}. 
Conversely, it follows that any minimal ruled surfaces $\phi: X \to \mathbb{P}^1$ over $\mathbb{P}^1$  (that is, Hirzebruch surfaces)  
have the pseudo-effective tangent bundle from the following proposition. 
However, they do not have nef tangent bundle except for the case of $X=\mathbb{P}^1 \times \mathbb{P}^1$, 
since they have a curve with negative self-intersection.

\begin{prop}\label{prop-toric}
If $X$ is a projective toric manifold, then $T_X$ is generically globally generated. 
In particular, any Hirzebruch surfaces have pseudo-effective tangent bundle. 
\end{prop}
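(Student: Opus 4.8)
The plan is to exploit the torus action and produce enough global holomorphic vector fields directly. Write $n=\dim X$ and let $T \cong (\mathbb{C}^*)^n$ be the open dense algebraic torus acting on $X$. Since the action $T \times X \to X$ is a morphism of varieties, differentiating it along $T$ at the identity yields a linear map from the Lie algebra $\mathfrak{t}=\mathrm{Lie}(T)$ into $H^0(X, T_X)$; denote by $v_1, \ldots, v_n \in H^0(X, T_X)$ the images of a basis of $\mathfrak{t}$. These are genuine global holomorphic sections of $T_X$, defined on all of $X$, precisely because the action is algebraic.

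Next I would check that $v_1, \ldots, v_n$ frame $T_X$ over the open torus. On $T$ with multiplicative coordinates $t_1, \ldots, t_n$ the fundamental vector fields of the action are the translation-invariant fields $t_i\, \partial/\partial t_i$, which form a holomorphic frame of $T_X|_T$. Hence the evaluation map $H^0(X, T_X) \to T_{X,x}$ is surjective at every point $x \in T$, and in particular at a general point of $X$. By definition this says that $T_X$ is globally generated at a general point, i.e.\ generically globally generated, which is the first assertion.

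Finally, for the \emph{In particular} statement: every Hirzebruch surface is a smooth projective toric surface, so the first assertion applies and $T_X$ is generically globally generated. Invoking the chain of implications recorded in the table of positivity notions---generically globally generated implies the existence of a positively curved singular hermitian metric, which by implication $(1)$ implies pseudo-effectivity---we conclude that $T_X$ is pseudo-effective.

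The only point requiring genuine care is the first one: verifying that the fundamental vector fields of the torus action, manifestly defined on the open orbit, extend to global holomorphic sections of $T_X$ on the whole of $X$. This is exactly where the regularity of the action $T \times X \to X$ as a morphism is used, rather than merely its restriction to the dense orbit; everything else is a direct frame computation in the multiplicative coordinates on $T$ together with the already-established implications among the positivity notions.
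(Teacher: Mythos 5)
Your proof is correct and takes essentially the same route as the paper: both differentiate the algebraic torus action to produce $n$ global holomorphic vector fields on $X$ that restrict to the frame $t_i\,\partial/\partial t_i$ of $T_X$ over the open orbit $(\mathbb{C}^*)^n$, giving generic global generation, and then conclude pseudo-effectivity of Hirzebruch surfaces via the implication chain in the positivity table.
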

\begin{proof}
For a toric manifold $X$, we have an inclusion $(\mathbb{C}^*)^n \subset X$ as a Zariski open dense subset and an action $(\mathbb{C}^*)^n \curvearrowright X$. 
Consider a family of actions $(e^{i \theta},1, \cdots, 1)$. Differentiate it by $\theta$ at $\theta = 0$, we obtain a holomorphic vector field on $X$. Similarly, we can construct $n$ vector fields which generate $T_X|_{(\mathbb{C}^*)^n}$, 
and thus $T_X$ is generically globally generated.    
\end{proof}

Now we consider a ruled surface $\phi: X \to C$ over an elliptic curve $C$. 
Thanks to Theorem \ref{theo-posi}, 
we can see that the ruling $\phi: X \to C$ should be a smooth morphism when $X$ has the pseudo-effective tangent bundle. 
The minimal ruled surface $X$ over $C$ can be classified 
by \cite{Ati55}, \cite{Ati57}, and \cite{Suw69}: 
$X$ is isomorphic to $S_{n}$, $A_{0}$, $A_{-1}$, or a surface in $\mathcal{S}_0$. 
Here a surface $X$ in $\mathcal{S}_0$ means the projective space bundle $\mathbb{P}(\mathcal{O}_C \oplus L)$ for some $L \in \Pic^0(C)$ 
and $A_{0}$ (resp. $A_{-1}$) is the projective space bundle associated with 
a vector bundle of rank $2$ 
that is the non-split extension of $\mathcal{O}_C$ by $\mathcal{O}_C$ (resp. $\mathcal{O}_{C}(p)$), 
where $p$ is a point in $C$. 
It can be seen that $A_0$, $A_{-1}$, and surfaces in $\mathcal{S}_0$ have the nef tangent bundle by \cite{CP91}, 
and thus the remaining problem is the case of $X=S_n$. 
The ruled surface $S_{n}$ is the projective space bundle associated with   
the vector bundle $\mathcal{O}_C \oplus \mathcal{O}_C(np)$. 
Note that the tangent bundle of $S_{0}=\mathbb{P}^1 \times C$ is nef. 
By the above observation,  it is enough for our purpose to investigate $X=S_{n}$ in the case of $n \geq 1$.  
By the following proposition, 
we can see that   $S_{n}$ has the pseudo-effective tangent bundle (which is not nef), 
and further that it admits no positively curved singular hermitian metric.

\begin{prop}\label{prop-ell}
Let $\phi: X \to C$ be a minimal ruled surface over an elliptic curve $C$. 
Then we have:
\begin{itemize}
\item[(1)] The tangent bundle of $S_n$ is pseudo-effective, 
but it does not admit positively curved singular hermitian metrics when $n \geq 1$. 
\item[(2)] The tangent bundle of $S_0$, $A_0$, $A_{-1}$, and a surface in $\mathcal{S}_0$ is nef. 
\end{itemize}
\end{prop}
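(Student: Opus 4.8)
The plan is to handle the three families in part~(2) through positivity of the underlying rank-$2$ bundle, and to split part~(1) into two independent assertions about $S_n$ $(n\ge 1)$: pseudo-effectivity of $T_{S_n}$, and the failure of the tangent sequence to split. The last of these is where the real work lies.

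For part~(2) I would first note that $S_0=\mathbb{P}^1\times C$ has $T_{S_0}=\mathrm{pr}_1^{*}T_{\mathbb{P}^1}\oplus\mathrm{pr}_2^{*}T_C$; since $T_{\mathbb{P}^1}=\mathcal{O}_{\mathbb{P}^1}(2)$ is ample and $T_C=\mathcal{O}_C$ is nef, both pullbacks and their direct sum are nef. For $A_0$, $A_{-1}$ and the surfaces in $\mathcal{S}_0$ the underlying bundle $E$ (the Atiyah bundle, or $\mathcal{O}_C\oplus L$ with $L\in\Pic^0(C)$) is semistable, and here I would simply invoke Campana--Peternell \cite{CP91}, who show that a geometrically ruled surface over an elliptic curve has nef tangent bundle exactly in this semistable range.

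For the pseudo-effectivity in part~(1) I would work on $P:=\mathbb{P}(T_X)$ with tautological class $\xi:=\mathcal{O}_{\mathbb{P}(T_X)}(1)$ and projection $\pi\colon P\to X$, using that $T_X$ is pseudo-effective once $\xi$ is a pseudo-effective line bundle (the equivalence with Definition \ref{defi-pseudo}; cf. \cite{Iwa}). Writing $C_0$ for the negative section and $f$ for a fibre of $X=S_n\to C$, so that $C_0^2=-np$, $C_0\cdot f=1$, $f^2=0$, and using $c_1(\pi^{*}T_C)=0$, we get $c_1(T_{X/C})=-K_X=2C_0+np\,f=C_0+C_\infty$, where $C_\infty:=C_0+np\,f$ is the positive section; this class is effective. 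The quotient $r\colon T_X\to\pi^{*}T_C=\mathcal{O}_X$ in the tangent sequence gives a section $s_0\colon X\to P$ whose image $D_0$ satisfies $\xi\equiv D_0+\pi^{*}c_1(T_{X/C})$ (obtained by restricting $\mathcal{O}_P(D_0)$ to the section and identifying it with the normal bundle, whose kernel sub-line-bundle is $T_{X/C}$). Hence $\xi\equiv D_0+\pi^{*}(C_0+C_\infty)$ is effective, so $\xi$ is pseudo-effective and therefore $T_{S_n}$ is pseudo-effective.

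The principal obstacle is to show that $T_{S_n}$ admits no positively curved singular hermitian metric when $n\ge 1$, and I would argue by contradiction. If such a metric existed, then since $c_1(\mathcal{O}_X)=0$, Theorem \ref{re-theo-split2} applied to $0\to T_{X/C}\to T_X\to\mathcal{O}_X\to 0$ would force this sequence to split. A splitting is precisely a bundle embedding $\mathcal{O}_X\hookrightarrow T_X$ lifting $\pi^{*}T_C$, i.e. a holomorphic connection on the submersion $S_n\to C$, equivalently a holomorphic projective connection on the $\mathbb{P}^1$-bundle $\mathbb{P}(E)\to C$ with $E=\mathcal{O}_C\oplus\mathcal{O}_C(np)$. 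By the Atiyah--Weil criterion such a connection forces some twist $E\otimes L=L\oplus(\mathcal{O}_C(np)\otimes L)$ to have every indecomposable summand of degree zero; for $n\ge1$ the degrees $\deg L$ and $\deg L+np$ cannot both vanish, a contradiction. (Equivalently, one shows directly that the extension class $\delta(1)\in H^1(C,\pi_{*}T_{X/C})$ of the tangent sequence is non-zero.) This is the delicate step: it is exactly here that the instability of $E$---the feature separating $S_n$ $(n\ge1)$ from the surfaces of part~(2)---is converted, via the splitting theorem, into the non-existence of a positively curved metric.
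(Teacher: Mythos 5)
Your argument for the pseudo-effectivity of $T_{S_n}$ --- the half you treat as routine --- has a genuine gap, and it is exactly the gap this paper isolates as an open problem. What you actually prove is that the tautological class $\xi=c_1(\mathcal{O}_{\mathbb{P}(T_X)}(1))$ is pseudo-effective (indeed effective: $\xi\equiv D_0+\pi^*(-K_X)$ with $-K_X\equiv C_0+C_\infty$). But pseudo-effectivity of $T_X$ in the sense of Definition \ref{defi-pseudo} is equivalent, by \cite{Iwa}, to weak positivity of $T_X$ at some point --- \emph{not} to pseudo-effectivity of $\xi$; the extra ingredient needed is that the non-nef locus of $\xi$ does not dominate $X$ under $\pi$. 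Your effective representative cannot supply this: $D_0$ is the image of a section of $\pi$, so $\pi(D_0)=X$, and nothing in your argument excludes $D_0$ from lying in the non-nef locus (triviality of $\xi|_{D_0}$ does not rule it out). This is precisely Problem \ref{prob-ps} and Remark \ref{rem-almost}: from pseudo-effectivity of the sub $T_{X/C}$ and the quotient $\mathcal{O}_X$ one only gets almost nefness of $T_X$, hence pseudo-effectivity of $\mathcal{O}_{\mathbb{P}(T_X)}(1)$ by \cite{BDPP}, and the authors state explicitly that they do not know how to pass from there to pseudo-effectivity of the bundle itself and that their attempts failed. The paper's actual proof is the long constructive part of the argument: for every $m$ it builds explicit holomorphic sections of $\Sym^m(T_X)\otimes\phi^*\mathcal{O}_C(2p)$, starting from meromorphic symmetric vector fields whose poles along $\phi^{-1}(p)$ are cancelled by suitable linear combinations of Weierstrass-type elliptic functions $P_k$, and checks that these sections generate generically. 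Nothing in your proposal replaces this step, so part (1) is not proved.

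The rest of your proposal is fine and partly different from the paper. Part (2) is handled the same way (via \cite{CP91}). For the non-existence of a positively curved metric you invoke Theorem \ref{theo-split2} exactly as the paper does, but then rule out the splitting by the Atiyah--Weil obstruction for $\mathbb{P}(\mathcal{O}_C\oplus\mathcal{O}_C(np))$, whereas the paper simply counts sections: $h^0(X,T_X)=n+1$ by \cite{Suw69}, while a splitting would give $h^0(X,T_{X/C})+h^0(X,\phi^*T_C)=(n+1)+1$. Your route can be made to work, but note that a splitting of the tangent sequence is a priori only a holomorphic Ehresmann connection on the ruling; to reach a $PGL_2$-principal connection to which Atiyah's criterion applies you should add that a rank-one distribution is automatically integrable, so the splitting exhibits $S_n$ as a flat $\mathbb{P}^1$-bundle, contradicting the non-vanishing trace-free Atiyah class. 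The paper's dimension count avoids this extra step.
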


\begin{proof}
All the ruled surfaces with nef tangent bundle are classified in \cite{CP91}, 
which implies that the conclusion (2) holds and the tangent bundle of $S_n$ is not nef for $n \geq 1$.

From now on, let $X$ be the projective space bundle $S_n$ associated with  
the vector bundle $E_n:=\mathcal{O}_C \oplus \mathcal{O}_C(np)$. 
We first check the latter statement in the conclusion (1). 
If $X=S_n$ admits a positively curved singular hermitian metric, 
the exact sequence 
$$
0 \rightarrow T_{X/C} \rightarrow T_X \rightarrow \phi^{*} T_C    \rightarrow 0
$$
splits by Theorem \ref{theo-split}, and thus 
we have 
\begin{align}\label{eq4}
h^{0}(X, T_X)=h^{0}(X,T_{X/C}) + h^{0}(X,\phi^{*} T_C). 
\end{align}
On the other hand, we have $h^{0}(X, T_X)=n+1$ from \cite[Theorem 3]{Suw69}. 
Also we can easily check that 
$$
\phi_{*}(T_{X/C})=\phi_{*}(-K_X)=  \Sym^2(E_n) \otimes \det E_n^\vee.
$$
This implies that 
$$
h^{0}(X,T_{X/C})=h^{0}(C, \mathcal{O}_C(-np) \oplus \mathcal{O}_C \oplus \mathcal{O}_C(np) ) = n+1. 
$$
This is a contradiction to (\ref{eq4}). 

\smallskip

We will prove that $T_X$ is pseudo-effective. 
For this purpose, it is sufficient to prove that 
$\Sym ^m(T_X) \otimes \phi^{*}\mathcal{O}(2p)$ is generically globally generated 
for any $m \geq 0$. 
Our strategy is to observe a gluing condition of $X=S_n$ carefully to construct holomorphic sections that generate $\Sym ^m(T_X) \otimes \phi^{*}\mathcal{O}(2p)$ at general points.

Let $v$ be a local coordinate centered at $p$ and let $V \subset C$
 be a sufficiently small open neighborhood of $p$. 
Further, let $U$ be the open set $U:=C \setminus \{p\}$ and 
$u$ be the standard coordinate of the universal cover $\mathbb{C} \to C$. 
The ruled surface $X$ can be constructed 
by gluing  $(u, \zeta) \in U \times \mathbb{P}^1$ and $(v, \eta) \in V \times \mathbb{P}^1$ 
with the following identification: 
\begin{align}\label{gl}
\zeta = v^n \eta \quad \text{ and } \quad [u]=p+v, 
\end{align}
where $\zeta$ and $\eta$ are the inhomogeneous coordinates of $\mathbb{P}^1$.  

Let $\theta $ be a meromorphic section of $\Sym ^m(T_X)$ with pole along the fiber $\phi^{-1}(p)$ of $p$. 
Our strategy is as follows: 
We first look for a sufficient condition for the pole of $\theta$ being of order at most  $2$. 
Then we concretely  construct $\theta$ satisfying this condition, 
which can be regarded as a holomorphic section of $\Sym ^m(T_X) \otimes \phi^{*}\mathcal{O}(2p)$, 
and we show that such sections generate $\Sym ^m(T_X)\otimes \phi^{*}\mathcal{O}(2p)$ on a Zariski open set.

Now $\theta$ is a meromorphic section of $\Sym ^m(T_X)$ whose pole appears only along  the fiber $\phi^{-1}(p)$. 
Hence, by expanding $\theta$ on $U \times \mathbb{P}^1$, 
we have the following equality 
\begin{align}
\theta=\sum_{p=0}^{m} a_p(u, \zeta) \Big(\frac{\partial}{\partial \zeta}\Big)^{m-p} \Big(\frac{\partial}{\partial u}\Big)^{p} 
\text{ on } U \times \mathbb{P}^1. 
\end{align}
Here $a_p$ is a meromorphic function on $X$. 
The gluing condition (\ref{gl}) yields that 
\begin{align}
\frac{\partial}{\partial \zeta}=\frac{1}{v^n} \frac{\partial}{\partial \eta}
\quad \text{ and } \quad
\frac{\partial}{\partial u}=-n\frac{\eta}{v} \frac{\partial}{\partial \eta} + \frac{\partial}{\partial v}. 
\end{align}
Then we can obtain the following expansion of $\theta$ on  $V \times \mathbb{P}^1$ 
\begin{align}\label{ten2}
\theta =\sum_{\ell=0}^m 
\Big\{\sum_{p=\ell}^{m} d_{p, \ell} \,  a_p(v, \eta) \frac{\eta^{p-\ell}}{v^{n(m-p)+p-\ell}} \Big\}
\Big(\frac{\partial}{\partial \eta}\Big)^{m-\ell} \Big(\frac{\partial}{\partial v}\Big)^{\ell} 
\text{ on } V \times \mathbb{P}^1 
\end{align}
by an involved, but straightforward computation. 
Here $d_{p,\ell}$ is the non-zero constant defined by 
$d_{p,\ell}:=(-n)^{p-\ell} \binom{p}{p-\ell}$. 
The ruling $X \to C$ is locally trivial and sections of $\Sym^{p}(T_F)$ on a fiber $F$ are polynomials of degree (at most) $2p$. 
This implies that the meromorphic function  $a_{m-k}(u, \zeta)$ is a polynomial of degree $2k$ with respect to $\zeta$, 
and thus we can write $a_{m-k}$  as 
\begin{align}\label{ten}
a_{m-k}(v, \eta)=\sum_{q=0}^{2k} a_{m-k}^{(q)}(v)\, \zeta^q
=\sum_{q=0}^{2k} a_{m-k}^{(q)}(v)\, v^{nq} \eta^q \quad \text{ for any } 0 \leq k \leq m 
\end{align}
for some meromorphic function $a_{m-k}^{(q)}(v)$ on $C$ with pole only at  $p$. 
Here we used (\ref{gl}) again. 

We will find a sufficient condition for $a_{m-k}^{(q)}(v)$ for guaranteeing that 
the coefficients in (\ref{ten2}) have the pole  of order at most $2$. 
We remark that the section $\theta$ satisfying this condition determines the holomorphic section of $\Sym ^m(T_X) \otimes \phi^{*}\mathcal{O}(2p)$. 
By substituting (\ref{ten}) for (\ref{ten2}) and rearranging it concerning the powers of $\eta$, 
a sufficient and necessary condition can be obtained, 
but this method needs so complicated computation that we want to avoid to write down it. 
Here,  to improve our prospect,  we focus only on a sufficient condition 
by considering the restricted situation where $a_{m-k}^{(q)}=0$ for  $q \not= k$. 
In this situation, it is not so difficult to show that 
$\theta$ determines the holomorphic section of $\Sym ^m(T_X) \otimes \phi^{*}\mathcal{O}(2p)$
if $a_{m-q}^{(q)}$ satisfies that 
\begin{align}\label{ten3}
\sum_{p=0}^q d_{m-p, m-q} a_{m-p}^{(p)}(v) \frac{1}{v^{q-p}} \text{ has the pole of order $\leq 2$ at $p$ for any  $0 \leq q \leq m$}. 
\end{align}
For an explanation, we prepare the table where we write down them for $q=0, 1, 2$. 

\begin{table}[H]\label{table}
\begin{center}
\begin{tabular}{|l|c||r|r|} \hline
$q=q$ & $\text{coeff.of } (\partial/\partial \eta)^{q} (\partial/\partial v)^{m-q}$ & $\sum_{p=0}^q d_{m-p, m-q} a_{m-p}^{(p)} /v^{q-p}$
\\ \hline \hline
$q=0$ & $\text{coeff. of } (\partial/\partial \eta)^{0} (\partial/\partial v)^{m}$  & 
$d_{m,m} a_{m}^{(0)}$  \smallskip\\ \hline
$q=1$ & $\text{ coeff. of } (\partial/\partial \eta)^{1} (\partial/\partial v)^{m-1}$ & $d_{m,m-1} a_{m}^{(0)} /v+d_{m-1,m-1} a_{m-1}^{(1)}$  \smallskip \\ \hline
$q=2$ & $\text{coeff. of } (\partial/\partial \eta)^{2} (\partial/\partial v)^{m-2}$ & $d_{m,m-2} a_{m}^{(0)} /v^2+d_{m-1,m-2} a_{m-1}^{(1)} /v + d_{m-2,m-2} a_{m-2}^{(2)}$
\\ \hline
\end{tabular}
\end{center}
\end{table}

To construct meromorphic functions $a_{m-p}^{(p)}$ on $C$ satisfying (\ref{ten3}), 
for every $n \geq 2$, 
we take meromorphic functions $P_n$ on the elliptic curve $C$ such that 
$P_n$ has the pole only at  $p$ and 
its Laurent expansion at $p$ can be written as follows: 
$$
P_n(v)=\frac{1}{v^n}+\sum_{k \geq n+1}\frac{a_k}{v^k}. 
$$
Note that we can easily find them by using Weierstrass's elliptic functions and their differential. 

We first put $a_m^{(0)}:=P_2/d_{m, m}$. 
Then the second line from the top in the table satisfies  (\ref{ten3}) 
(that is, it has the pole of order  at most  $2$) 
if we define $a_{m-1}^{(1)}$ by $a_{m-1}^{(1)}:=-d_{m,m-1} /d_{m-1, m-1} P_3$. 
By the same way, the third line also satisfies  (\ref{ten3}) 
if we  define $a_{m-2}^{(2)}$ by an appropriate linear combination of $P_3$ and $P_4$. 
By repeating this process, 
we can construct meromorphic functions $a_{m-p}^{(p)}$ on $C$ satisfying (\ref{ten3}) 
by a linear combination of $\{P_3\}_{k=3}^{p+2}$. 
We denote by $\theta_0$ the holomorphic section of $\Sym ^m(T_X) \otimes \phi^{*}\mathcal{O}(2p)$ obtained from the above construction. 
The section $\theta_0$ generates the vector $(\partial/\partial \eta)^{0} (\partial/\partial v)^{m}$ on a Zariski open set, 
since $a_m^{(0)}=P_2/d_{m, m}$ is non-zero. 

Now we put $a_m^{(0)}:=0$ and $a_{m-1}^{(1)}:=P_2/d_{m-1, m-1} $, 
so that the first and the second line in the table have pole of  order at most $2$. 
Then, by the same argument as above, we can construct meromorphic functions $a_{m-p}^{(p)}$ 
satisfying (\ref{ten3})  by defining them  by an appropriate linear combination of $\{P_k\}_{k=3}^{p+2}$ 
(for example $a_{m-2}^{(2)}:=- d_{m-1, m-2}/d_{m-2, m-2} P_3$). 
We denote by $\theta_1$ the obtained holomorphic section of $\Sym ^m(T_X) \otimes \phi^{*}\mathcal{O}(2p)$. 
By the construction, the function $a_m^{(0)}$ is zero and $a_{m-1}^{(1)}$  is non-zero. 
Hence it follows the sections $\theta_0$ and $\theta_1$ generate the vectors
$(\partial/\partial \eta)^{0} (\partial/\partial v)^{m}$ and 
$(\partial/\partial \eta)^{1} (\partial/\partial v)^{m-1}$ on a Zariski open set.

By repeating this process, we can construct holomorphic sections $\{\theta_{p}\}_{p=0}^m$ of 
$\Sym ^m(T_X) \otimes \phi^{*}\mathcal{O}(2p)$ generating $\Sym ^m(T_X) \otimes \phi^{*}\mathcal{O}(2p)$ on a Zariski open set. 
\end{proof}

In the rest of this subsection, we suggest the following problem to investigate a gap between almost nefness and pseudo-effectivity of vector bundles. 

\begin{prob}\label{prob-ps}
We consider an exact sequence of vector bundles
\begin{align*}
0 \longrightarrow S \longrightarrow E
\longrightarrow Q \longrightarrow 0. 
\end{align*}
When $S$ and $Q$ are pseudo-effective, 
then is $E$ pseudo-effective?

\end{prob}

\begin{rem}\label{rem-almost}
When $S$ and $Q$ are nef, its extension $E$ is also nef (see \cite[Proposition 1.15]{DPS94}). 
Hence we can easily show that $E$ is almost nef if $S$ and $Q$ are almost nef. 
In particular, it can be shown that $\mathcal{O}_{E}(1)$ is pseudo-effective by \cite{BDPP}, 
but we do not know whether or not $E$ itself is pseudo-effective. 
The difficulty is to show that the image of the non-nef locus 
$\mathcal{O}_{E}(1)$ to $X$ is properly contained in $X$. 
If Problem \ref{prob-ps} can be affirmatively solved, 
the pseudo-effectivity of the tangent bundle of $X=S_n$ is easily obtained, 
by applying it to the standard exact sequence of the tangent bundle. 
In fact, we tried some methods in \cite{Suw69}, \cite{DPS94}, and \cite{Har70} to solve Problem \ref{prob-ps}, 
but it did not succeed. 
This problem seems to be subtle since we do not know whether there is a gap between almost nefness and pseudo-effectivity. 
\end{rem}

\subsection{On rational surfaces}\label{Sec4-2}
By the results in Subsection \ref{Sec4-1}, 
it is enough for the classification of the surfaces 
to determine when the blow-up of the Hirzebruch surface  
has pseudo-effective tangent bundle. 
However, it seems to be a too hard problem to classify all the  blow-ups completely 
since $X$ delicately depends on the position and the number of blow-up points. 
In this subsection, we study only blow-ups along {\textit{general points}}. 
The complete classification can not be achieved even in this case, 
but we obtain an interesting relation between positivity of tangent bundle and the geometry of Hirzebruch surfaces. 
The following proposition gives the requirement for the blow-up having pseudo-effective tangent bundle. 

\begin{prop}\label{prop-blow}
Let $\phi: \mathbb{F}_n \to \mathbb{P}^1$ be the Hirzebruch surface and let $\pi: X \to \mathbb{F}_n$ be the blow-up along the set $\Sigma$ of general points on $\mathbb{F}_n$. 
Then we have:
\begin{itemize}
\item[(1)] If the tangent bundle $T_X$ of $X$ is generically globally generated, 
then $\sharp \Sigma \leq 2$. 
\item[(2)] If the tangent bundle $T_X$ of $X$ is pseudo-effective, 
then $\sharp \Sigma \leq 4$. 
\end{itemize}
\end{prop}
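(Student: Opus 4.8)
The plan is to handle the two parts by entirely different methods: part (1) by a Lie-theoretic argument with holomorphic vector fields, and part (2) by the numerical positivity carried by pseudo-effectivity. For (1), I would first use the standard identification of global vector fields on a blow-up: a vector field on $\mathbb{F}_n$ lifts to $X=\mathrm{Bl}_\Sigma\mathbb{F}_n$ if and only if it vanishes at every point of $\Sigma$, so $H^0(X,T_X)$ is the space $\mathfrak g_\Sigma$ of $v\in H^0(\mathbb{F}_n,T_{\mathbb{F}_n})$ vanishing on $\Sigma$, and generic global generation of $T_X$ is equivalent to the statement that $\mathfrak g_\Sigma$ spans $T_{\mathbb{F}_n,x}$ at a general point $x$. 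The key device is the ruling $\phi\colon\mathbb{F}_n\to\mathbb{P}^1$: since its fibres form the unique minimal covering family of rational curves, $\mathrm{Aut}^0(\mathbb{F}_n)$ preserves the fibration and differentiation yields a map $H^0(\mathbb{F}_n,T_{\mathbb{F}_n})\to H^0(\mathbb{P}^1,T_{\mathbb{P}^1})=\mathfrak{sl}_2$ whose kernel consists of the vertical (fibrewise) fields. For general points the images $\phi(p_i)$ are distinct, so any $v\in\mathfrak g_\Sigma$ projects to a field on $\mathbb{P}^1$ vanishing at the $\sharp\Sigma$ distinct points $\phi(p_i)$; since a nonzero field on $\mathbb{P}^1$ has at most two zeros, $\sharp\Sigma\ge 3$ forces this projection to vanish. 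Hence every global vector field on $X$ is vertical and spans at most the one-dimensional vertical tangent direction at a general point, contradicting generic global generation. This gives $\sharp\Sigma\le 2$.

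For (2), I would start from the implications recorded in the positivity table: a pseudo-effective $T_X$ is almost nef, and its determinant $c_1(T_X)=-K_X$ is pseudo-effective (the same fact used in the proof of Theorem \ref{re-theo-vect}). I would record the numerical invariants $K_X^2=K_{\mathbb{F}_n}^2-\sharp\Sigma=8-\sharp\Sigma$ and $c_2(X)=c_2(\mathbb{F}_n)+\sharp\Sigma=4+\sharp\Sigma$, which are the quantities the bound must come from. Pseudo-effectivity of $-K_X$ by itself only yields a strictly weaker bound, so the sharp estimate has to exploit the rank-two structure rather than the determinant alone.

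The core of (2) is to convert almost nefness into the sharp numerical constraint. Following the non-nef-locus viewpoint of Remark \ref{rem-almost}, pseudo-effectivity forces the image in $X$ of the non-nef locus of $\mathcal{O}_{\mathbb{P}(T_X)}(1)$ to be a proper subvariety; equivalently, through a general point there must pass a curve $C$ of a covering family with $T_X|_C$ nef. I would produce the obstruction by analysing $T_X$ along the ruling $\psi\colon X\to\mathbb{P}^1$ induced by $\phi$: the general fibre gives $T_X|_{\widetilde f}=\mathcal{O}(2)\oplus\mathcal{O}$, while the $\sharp\Sigma$ reducible fibres $\widetilde f_i+E_i$ over the blown-up points impose vanishing conditions on the fibrewise sections of $\Sym^m T_X$. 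Paralleling the explicit construction in Proposition \ref{prop-ell}, one expands a candidate generating section of $\Sym^m T_X\otimes(\text{small ample})$ in fibre coordinates and counts how many of the independent leading directions survive after imposing the $\sharp\Sigma$ conditions coming from the exceptional curves; the bookkeeping is designed to show that generic global generation of all $\Sym^m T_X\otimes A^{\varepsilon}$ (equivalently almost nefness) breaks down once $\sharp\Sigma\ge 5$. Substituting $K_X^2=8-\sharp\Sigma$ and $c_2(X)=4+\sharp\Sigma$ into the resulting inequality then yields $\sharp\Sigma\le 4$.

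The step I expect to be hardest is exactly this last one: isolating the correct covering family, equivalently the maximal positive sub-line-bundle (foliation) of $T_X$ and the associated quotient, and carrying out the intersection-theoretic and dimension count uniformly in $n$ and in the symmetric power $m$, while ensuring that the curves chosen genuinely avoid the countable bad locus governing almost nefness. The explicit gluing computation for $S_n$ in Proposition \ref{prop-ell} is the natural template, but the presence of the exceptional curves and the dependence on the fibre type over the images of $\Sigma$ make the accounting considerably more delicate, and pinning down precisely why $\sharp\Sigma=5$ is the first value at which pseudo-effectivity fails is the crux of the argument.
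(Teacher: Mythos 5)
Your part (1) is correct and is in essence the same proof as the paper's: both arguments project a global holomorphic vector field to the base of the ruling and count zeros on $\mathbb{P}^1$. The paper phrases this on $X$ (pairing a field $\xi$ with $\pi^*\phi^*dt$ and noting the pairing is constant on fibres), while you phrase it on $\mathbb{F}_n$ via the identification of $H^0(X,T_X)$ with the fields on $\mathbb{F}_n$ vanishing on $\Sigma$; both then use that a nonzero field on $\mathbb{P}^1$ has at most $\deg T_{\mathbb{P}^1}=2$ zeros. (A cosmetic caveat: for $n=0$ the ruling is not the unique minimal covering family, but $\mathrm{Aut}^0$ still preserves each projection, so your projectability claim stands.)

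Part (2), however, has a genuine gap: the entire quantitative content of the bound $\sharp\Sigma\leq 4$ is left unexecuted. You assert that a fibre-coordinate expansion plus ``bookkeeping'' shows twisted generic global generation breaks down once $\sharp\Sigma\geq 5$, and that substituting $K_X^2=8-\sharp\Sigma$ and $c_2(X)=4+\sharp\Sigma$ into ``the resulting inequality'' yields the bound, but no such inequality is ever derived, and it is hard to see what it could be: the natural candidate for nef bundles on surfaces, $c_1^2\geq c_2$, would give $\sharp\Sigma\leq 2$, which is not the claimed bound and is not even a necessary condition here, since blow-ups at three general points do have pseudo-effective tangent bundle by Proposition \ref{prop-rem}. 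Moreover, twisted generic global generation of symmetric powers characterizes pseudo-effectivity, not almost nefness; Remark \ref{rem-almost} stresses that the gap between these notions is unknown, so your parenthetical ``equivalently'' is unjustified. The idea your proposal is missing is the paper's local analytic mechanism: from positively curved metrics $h_m$ on $\Sym^m T_X\otimes A$ one induces quotient metrics $g_m$ on $\pi^*\phi^*T_{\mathbb{P}^1}$ with $\sqrt{-1}\Theta_{g_m}\geq -\frac{1}{m}\omega_A$; a weak limit of these currents descends to a closed positive current $S\in c_1(T_{\mathbb{P}^1})$ on $\mathbb{P}^1$; and each blown-up point $p$ forces the Lelong number bound $\nu(S,\phi(p))\geq \tfrac{1}{2}$, because $\pi^*\phi^*dt=v\,ds+s\,dv$ vanishes at the corresponding point $p'$ of the exceptional divisor (giving Lelong number $\geq 1$ for $\sqrt{-1}\Theta_{g_m}$ at $p'$), and restriction to the curve $\pi(\{v=s\})=\{t=s^2\}$ converts this into the factor $\tfrac{1}{2}$ downstairs. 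Since $S$ has total mass $\deg T_{\mathbb{P}^1}=2$, at most four points can carry Lelong number $\geq\tfrac{1}{2}$, which is exactly where the bound $4$ comes from. Without this (or an equivalent) quantitative mechanism, your part (2) is a program, not a proof.
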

\begin{rem}\label{rem-blow}
The interesting point here is that 
the conclusion of $\sharp \Sigma \leq 2$ in (1) is optimal, 
and further the generic global generation and pseudo-effectivity 
differently behave for $\sharp \Sigma$. 
Indeed, it follows that the tangent bundle $T_X$ 
in the case of $\sharp \Sigma \leq 3$ is pseudo-effective, 
but not generically globally generated from Proposition \ref{prop-rem}. 
\end{rem}
\begin{proof}
    (1) Fix a holomorphic vector field $\xi$ on $X$. We shall define a holomorphic vector field $\theta_\xi$ on $\mathbb{P}^1$ as follows. Let $t$ be a local holomorphic coordinate on $U \subset \mathbb{P}^1$. 
By pulling back $dt$, we obtain 
a holomorphic $1$-form $\pi^* \phi^* dt$ on $\widetilde{U} := (\pi \circ \phi)^{-1}(U)$. Then
$\langle \xi, \pi^* \phi^* dt \rangle$
is a holomorphic function on $\widetilde{U}$. Thus it is constant along each fiber and defines a holomorphic function on $U$.
Now we define the holomorphic vector field $\theta_\xi$ on $\mathbb{P}^1$ to be 
$$
\theta_\xi:=\langle \theta_\xi, dt\rangle \, \frac{\partial}{\partial t} 
\quad \text{ and } \quad \langle \theta_\xi, dt\rangle := \langle \xi, \pi^*\phi^*dt\rangle.
$$
 Since we assumed that $T_X$ is generically globally generated, we can choose $\xi$ with $\theta_\xi \not \equiv 0$ on $\mathbb{P}^1$.
    
We claim that $\theta_\xi$ has zeros on the set $\phi(\Sigma) \subset \mathbb{P}^1$. 
To prove the claim, we take a local coordinate $(t,s)$ on $\mathbb{F}_n$ centered at a point in $\Sigma$ such that $t$ is the pull-back of a local coordinate on $\mathbb{P}^1$. 
If we put $v := t/s$, then $(v,s)$ is a coordinate on $X$. 
Then we have
$$
\langle\xi, \pi^* \phi^* dt \rangle = \langle \xi, d(vs) \rangle = \langle \xi, s dv + v ds \rangle.
$$
The last term vanishes at $(v,s) =(0,0)$, and thus $\langle \theta_\xi, dt \rangle = 0$ at $t=0$. 
This shows the claim.
    
In the case of $\sharp \Sigma \geq 3$, 
the vector field $\theta_\xi$ has at least three zeros on $\mathbb{P}^1$.
It contradicts to the fact  of $\deg T_{\mathbb{P}^1} = 2$, thus we have $\sharp \Sigma \leq 2$. 

\smallskip
(2) 
    Since $T_X$ is pseudo-effective, we can choose an ample line bundle $A$ and a sequence of positively curved singular hermitian metrics $h_m$ on $(\Sym^m T_X) \otimes A$. Fix a smooth hermitian metric $h_A$ on $A$ with positive curvature.
    Then $h_m \otimes h_A^{-1}$ is a (possibly not positively curved) singular hermitian metric on $\Sym^m T_X$. Define a singular hermitian metric $g_m$ on $\pi^* \phi^* T_{\mathbb{P}^1}$ by the $m$-th root of the quotient metric of $h_m \otimes h_A^{-1}$ induced by the morphism $\Sym^m T_X \to (\pi^* \phi^* T_{\mathbb{P}^1})^{\otimes m}$. Since $(h_m \otimes h_A^{-1}) \otimes h_A$ is positively curved, 
    the metric $g_m^m \otimes h_A$ is also positively curved. 
 The curvature current $\sqrt{-1}\Theta_{g_m}$ of $g_m$ satisfies that 
    $$\sqrt{-1}\Theta_{g_m} \geq -\frac{1}{m} \omega_A.$$
    Then by taking a subsequence (if necessary), 
    we can assume that $\sqrt{-1}\Theta_{g_m}$ weakly converges to a positive current $T \in c_1(\pi^*\phi^* T_{\mathbb{P}^1})$. 
    By the argument similar to  (1),  we obtain a $d$-closed positive $(1,1)$-current $S$ in $c_1(T_{\mathbb{P}^1})$ such that $T =  \phi^* \pi^*S$.
    Hence we have 
    $$
    \sqrt{-1}\Theta_{g_m} \to \pi^* \phi^* S = T \in c_1(\phi^*\pi^* T_{\mathbb{P}^1}). 
    $$

    We take a point $p \in \Sigma$ and put $p_0:=\phi(p)$. 
    We claim that the following bound of the Lelong number
    \begin{equation}
    \nu(S, p_0) \geq \frac{1}{2}.\label{claim_lelong}
    \end{equation}
    We fix a local coordinate $t$ near $p_0 \in \mathbb{P}^1$.  
    Let $(t,s)$ be a coordinate on $\mathbb{F}_n$ centered at $p$.   
    As before, by  putting $v = t/s$, $(v,s)$ is a coordinate on $X$. Let $p' \in X$ be a point defined by $(v,s) = (0,0)$.
    Let $C$ be a (local) holomorphic curve on $X$ defined by $\{v=s\}$. We will denote $\overline{C} := \pi(C)$. The defining equation of $\overline{C}$ is $\{ t/s = s \} = \{ t = s^2\}$. Then we have 
\begin{equation}
\nu(S, p_0) = \frac{1}{2} \nu(\phi^* S|_{\overline{C}}, p).\label{eqn:Lelong-num}
\end{equation}
    Indeed, the function $\phi^* \gamma$ is a local potential of $\phi^* S$ for a local potential $\gamma$ of $S$. 
    Note that $s$ is a local coordinate on $\overline{C}$ while $t= s^2$ is a local coordinate on $\mathbb{P}^1$. We can calculate each Lelong number by the formula
    $$ \nu(S, p_0) = \liminf_{t \to 0} \frac{\gamma(t)}{\log |t|},$$
    and thus
    $$ \nu(\phi^* S|_{\overline{C}}, p) = \liminf_{s \to 0 } \frac{\phi^*\gamma(s^2,s)}{\log |s|} = \liminf_{s \to 0 } \frac{\gamma(s^2)}{\log |s|} = 2 \nu(S, p_0). $$
    This proves (\ref{eqn:Lelong-num}).
    Since the Lelong number will increase after taking restriction, we have 
    $$\nu(\phi^* S|_{\overline{C}}, p)=\nu(T|_C, p') \geq \nu(T, p').$$
    Lelong numbers will also increase after taking a weak limit of currents, thus we obtain 
    $$\nu(T, p')\geq \limsup_{m \to +\infty}\nu(\sqrt{-1}\Theta_{g_m}, p').$$
    The local weight of $g_m$ is written as 
    $$\frac{1}{2m} \log |(\pi^*\phi^* (dt))^m|^2_{h_m^{-1} \otimes h_A}. $$ 
    Since $t = vs$ on $X$, we can calculate as follows:
    \begin{equation}
    |(\pi^*\phi^* (dt))^m|^2_{h_m^{-1} \otimes h_A} = |(vds + sdv)^m|^2_{h_m^{-1} \otimes h_A}.\label{eqn:Lelong-vector}
    \end{equation}
    Since $h_m^{-1}$ is negatively curved and $h_A$ is smooth, it follows that 
    $$|\cdot|^2_{h_m^{-1} \otimes h_A} \leq C_0 |\cdot|^2_{h_{\rm sm}} $$
    for a smooth hermitian metric $h_{\rm sm}$ and some constant $C_{0} >0$ (both depending on $m$). Then the right-hand side of (\ref{eqn:Lelong-vector}) is bounded as
    \begin{align*}
    &\leq C_0 |(vds + sdv)^m|^{2}_{h_{\rm sm}}\\
    &\leq C_0 |(v,s)|^{2m}.
    \end{align*}
    Thus, the Lelong number of $\sqrt{-1}\Theta_{g_m}$ is bounded as
    $$ \nu(\sqrt{-1}\Theta_{g_m}, p') \geq \frac{1}{2m}\liminf_{(v,s) \to 0}\frac{C_{0} |(v,s)|^{2m}}{\log|(v,s)|} = 1.$$
    This proves (\ref{claim_lelong}). Since $\deg T_{\mathbb{P}^1} = 2$, there must be at most four points where $S$ has the Lelong number greater than or equal to $1/2$. Therefore $\sharp \Sigma \leq 4$.
\end{proof}

We finally prove Proposition \ref{prop-rem} by applying the following lemma. 
The lemma is useful when we compare a vector field on a given manifold 
with its blow-up.

\begin{lemm}\label{lem-bup}
Let $\pi: Y \to \mathbb{C}^2$ be the blow-up at  $(\alpha,\beta) \in \mathbb{C}^2$ with the exceptional divisor $E$, 
and let $(x,y)$ be the standard coordinate of  $\mathbb{C}^2$.
We consider a  holomorphic section $\theta$ of $\Sym^{m}T_{\mathbb{C}^2}$ and its expansion 
$$
\theta = \sum_{k=0}^m f_k(x,y) \Big(\frac{\partial}{\partial x}\Big)^{k} \Big(\frac{\partial}{\partial y}\Big)^{m-k}. 
$$
Then the pull-back $(\pi|_{Y \setminus E})^{*} \theta$ by the isomorphism $\pi|_{Y \setminus E}$ on $Y \setminus E$ 
can be extended to the holomorphic section of $\Sym^{m}T_{Y}$ if and only if 
$$
\sum_{k=0}^m f_k( s+\alpha ,st+\beta) 
\Big(\frac{\partial}{\partial s} -  \frac{t}{s}\frac{\partial}{\partial t} \Big)^{k} 
\Big(\frac{1}{s}\frac{\partial}{\partial t}\Big)^{m-k}. 
$$
is holomorphic  with respect to $(s,t) \in \mathbb{C}^2$.
\end{lemm}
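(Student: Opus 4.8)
The plan is to prove the lemma by an explicit computation in blow-up coordinates, and then to pass from the single chart appearing in the statement to all of $Y$ by a Hartogs-type extension argument.

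First I would fix coordinates on the blow-up. Writing $Y = \{((x,y),[u:w]) : (x-\alpha)w = (y-\beta)u\}$, the manifold $Y$ is covered by two affine charts, and I focus on the chart where $u \neq 0$. Setting $t = w/u$ and $s = x-\alpha$, the morphism $\pi$ reads $x = s+\alpha$ and $y = st+\beta$, and the exceptional divisor is $E = \{s = 0\}$. On this chart $(s,t)$ is a holomorphic coordinate system, so $\partial/\partial s, \partial/\partial t$ form a holomorphic frame of $T_Y$, and the symmetric monomials $(\partial/\partial s)^a (\partial/\partial t)^{m-a}$ form a holomorphic frame of $\Sym^m T_Y$ there.

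Next I would compute the pullback of the coordinate vector fields. From $\pi$ one gets $d\pi(\partial/\partial s) = \partial/\partial x + t\,\partial/\partial y$ and $d\pi(\partial/\partial t) = s\,\partial/\partial y$. Inverting these relations on $Y\setminus E$, where $s\neq 0$, yields $(d\pi)^{-1}(\partial/\partial y) = \tfrac{1}{s}\partial/\partial t$ and $(d\pi)^{-1}(\partial/\partial x) = \partial/\partial s - \tfrac{t}{s}\partial/\partial t$. Since the pullback of a section of $\Sym^m T$ by the isomorphism $\pi|_{Y\setminus E}$ is induced by the $m$-th symmetric power of $(d\pi)^{-1}$ together with $f_k \mapsto f_k\circ\pi$, the section $(\pi|_{Y\setminus E})^*\theta$ is, in this chart, exactly the displayed expression. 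In particular it is manifestly holomorphic wherever $s\neq 0$, so the only possible obstruction to holomorphicity is a pole along $E = \{s=0\}$.

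Finally I would establish the equivalence. The \emph{only if} direction is immediate: if $(\pi|_{Y\setminus E})^*\theta$ extends to a holomorphic section of $\Sym^m T_Y$, then its expansion in the chart $(s,t)$ in the frame $(\partial/\partial s)^a(\partial/\partial t)^{m-a}$ has holomorphic coefficients, and this expansion is precisely the displayed formula. For the \emph{if} direction, suppose the displayed expression is holomorphic on the whole chart $\mathbb{C}^2_{(s,t)}$. This chart covers all of $E$ except the single point $q$ corresponding to $t=\infty$ (which lies in the second chart $\{u = 0\}$), so $\pi^*\theta$ extends holomorphically across $E\setminus\{q\}$; combined with the automatic holomorphicity on $Y\setminus E$, where $\pi$ is an isomorphism and $\theta$ is holomorphic, we obtain a holomorphic section of $\Sym^m T_Y$ on $Y\setminus\{q\}$. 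Since $\dim Y = 2$ and $\Sym^m T_Y$ is locally free, the second Riemann extension (Hartogs) theorem extends it across $q$, giving a holomorphic section on all of $Y$. I expect the one genuinely subtle point to be exactly this last step: the chart $(s,t)$ does not see the point $q\in E$, so holomorphicity in the chart does not by itself yield a section over all of $Y$, and it is the Hartogs extension across $q$ — rather than a redundant computation in the second chart — that cleanly closes the argument.
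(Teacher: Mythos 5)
Your proposal is correct and follows essentially the same route as the paper: an explicit computation of $(d\pi)^{-1}$ in the affine chart $(s,t)$ (giving exactly the displayed expression), followed by extension across the single point of $E$ missed by that chart via the codimension-two (Hartogs/Riemann) extension theorem, which is precisely the step the paper invokes. No gaps; the chart computation, the identification of the missing point $q$, and the extension argument all match the paper's proof.
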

\begin{proof}
We first remark that any holomorphic section $\xi$ of $\Sym^{m}T_{Y}$ determines the section $\theta_{\xi}$ of $\Sym^{m}T_{{\mathbb{C}}^2 }$. 
Indeed, a given section $\xi$ induces the section $\theta_{\xi}$ of $\Sym^{m}T_{{\mathbb{C}}^2 }$ on $\mathbb{C}^2 \setminus \{(\alpha, \beta)\}$ 
via the isomorphism $\pi|_{Y \setminus E}$, 
which can be extended on $\mathbb{C}^2$ 
since the blow-up center has codimension two.

We consider the descriptions: 
\begin{align*}
Y &= \{ (x,y,[z  : w]) \in \mathbb{C}^2\times \mathbb{P}^1 \,|\,  (x-\alpha)w = (y-\beta)z \},  \\
E &= \{ (\alpha,\beta ,[z : w]) \,|\,  [z  : w] \in  \mathbb{P}^1 \}. 
\end{align*}
and put the Zariski open set  $Y' := Y \cap \{ w \neq 0 \}$.  
The following map $r$ gives a coordinate of $Y'$ and 
$\pi |_{Y'}$ can be written as follows: 
$$
\begin{array}{ccccccc}
r : \mathbb{C}^2                 & \rightarrow& Y'       & &\pi |_{Y'}  :   Y'  &\rightarrow & \mathbb{C}^2        \\
(s,t)                   &    \mapsto  & (s+\alpha,st+\beta,[1 : t]) &  &   (x,y,[z:w])&  \mapsto &(x,y)
\end{array}
$$
If $(\pi \circ r  )^{*}\theta$ is holomorphic on $\mathbb{C}^2  $, then
$(\pi|_{Y \setminus E})^{*} \theta$ can be extended to the holomorphic section of $\Sym^{m}T_{Y}$. 
Indeed, in this case, the section
$(\pi|_{Y \setminus E})^{*} \theta$ can be extended to the holomorphic section 
of  $\Sym^{m}T_{Y'  }$. 
Hence it can also be extended on $ Y $ since the codimension of $E \cap \{ w  = 0\}$ is two.

By calculation, we obtain
$$
(\pi \circ r )^{*}\theta = \sum_{k=0}^m f_k( s+\alpha ,st+\beta) 
\Big(\frac{\partial}{\partial s} -  \frac{t}{s}\frac{\partial}{\partial t} \Big)^{k} 
\Big(\frac{1}{s}\frac{\partial}{\partial t}\Big)^{m-k}. 
$$
Hence $(\pi \circ r )^{*}\theta$  is holomorphic on $\mathbb{C}^2$
if and only if the right hand side is holomorphic in $(s,t) \in \mathbb{C}^2$, which completes  the proof.
\end{proof}

\begin{prop}\label{prop-rem}
We have: 
\begin{itemize}
\item[(1)] The blow-up of the Hirzebruch surface $\mathbb{F}_n$ along general one or two points has 
the generically globally generated tangent bundle. 
\item[(2)] The blow-up of the Hirzebruch surface $\mathbb{F}_n$ along general three points has 
the pseudo-effective tangent bundle. 
\end{itemize}
\end{prop}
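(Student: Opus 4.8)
The plan is to reduce both statements to the study of (symmetric powers of) vector fields on the Hirzebruch surface $\mathbb{F}_n$ itself, transported to the blow-up $X=\pi^{-1}(\mathbb{F}_n)$ through the extension criterion of Lemma~\ref{lem-bup} and exploiting that $T_{\mathbb{F}_n}$ is generically globally generated, and in fact big, by Proposition~\ref{prop-toric}. For (1) I would apply Lemma~\ref{lem-bup} with $m=1$: a global vector field on $\mathbb{F}_n$ extends across the exceptional divisors if and only if it vanishes at every point of $\Sigma$, so that
$$
H^0(X,T_X)\cong\{\xi\in H^0(\mathbb{F}_n,T_{\mathbb{F}_n}) : \xi(p)=0 \text{ for all } p\in\Sigma\}.
$$
Thus $T_X$ is generically globally generated once I show that the vector fields on $\mathbb{F}_n$ vanishing along $\Sigma$ still span the tangent space at a further general point $q$; equivalently, that the evaluation map
$$
H^0(\mathbb{F}_n,T_{\mathbb{F}_n})\longrightarrow \bigoplus_{p\in\Sigma}T_{\mathbb{F}_n,p}\ \oplus\ T_{\mathbb{F}_n,q}
$$
is surjective for $\Sigma\cup\{q\}$ in general position. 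Since $\sharp\Sigma\le 2$ this target has dimension at most $6$, while $h^0(\mathbb{F}_n,T_{\mathbb{F}_n})\ge 6$ (with equality exactly for $n\le 1$), so there is just enough room; I would verify the required nondegeneracy by writing down explicit vector fields in the toric gluing coordinates of $\mathbb{F}_n$ (the vertical fields along the ruling together with the fields lifted from the base $\mathbb{P}^1$). For $\mathbb{F}_0=\mathbb{P}^1\times\mathbb{P}^1$ this reduces to the classical fact that $H^0(\mathbb{P}^1,\mathcal{O}(2))$ evaluates isomorphically at three distinct points, and the case $n\ge1$ follows similarly with extra room.

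For (2), with $\sharp\Sigma=3$ general points, generic global generation must fail by Proposition~\ref{prop-blow}(1), so I would instead produce, for every $m$, sections of $\Sym^m T_X\otimes L$ generically generating this bundle, where $L=\pi^*\mathcal{O}_{\mathbb{F}_n}(D)$ is a fixed effective twist independent of $m$. This yields pseudo-effectivity because, for any ample $A$ and $b\gg0$, the bundle $A^b\otimes L^{-1}$ is globally generated, so $\Sym^{m}T_X\otimes A^{b}=(\Sym^{m}T_X\otimes L)\otimes(A^b\otimes L^{-1})$ is globally generated at a general point; this is exactly the weak positivity of Definition~\ref{defi-pseudo}. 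Such sections are obtained by transporting, via Lemma~\ref{lem-bup}, the elements of $H^0(\mathbb{F}_n,\Sym^m T_{\mathbb{F}_n}\otimes\mathcal{O}(D))$ whose pullbacks extend across $E_1,E_2,E_3$. The extension condition at each $p_i$ forces a vanishing of bounded weighted order, imposing $O(m^2)$ linear conditions per point, whereas $h^0(\mathbb{F}_n,\Sym^m T_{\mathbb{F}_n}\otimes\mathcal{O}(D))$ grows like $m^3$ since $T_{\mathbb{F}_n}$ is big; hence for each $m$ a large space of sections survives the $3\cdot O(m^2)$ conditions.

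\emph{Main obstacle.} Passing from the mere existence of surviving sections (the $m^3$-versus-$m^2$ count) to \emph{generic generation} is the crux: I must show the surviving sections span the full rank-$(m+1)$ fibre $\Sym^m T_{X,q}$ at a general $q$, uniformly in $m$. I would do this by an explicit inductive construction modeled on the proof of Proposition~\ref{prop-ell}, choosing the coefficients $f_k$ to realize, one at a time, each monomial direction $(\partial/\partial x)^k(\partial/\partial y)^{m-k}$ at a general point while still meeting the three local extension conditions, using the partial-fraction freedom afforded by the twist $\mathcal{O}(D)$. The delicate point is that this is expected to work for three general points but to break at four, matching Proposition~\ref{prop-blow}(2); moreover the essential use of \emph{general position} cannot be bypassed by a semicontinuity argument, since blowing up torus-fixed points keeps the surface toric and hence keeps $T_X$ generically globally generated even for three or more points, so the special (toric) configuration is strictly better behaved than the generic one.
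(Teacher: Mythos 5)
Your part (1) is essentially the paper's own argument: for $m=1$ the extension criterion of Lemma \ref{lem-bup} reduces to vanishing of the vector field at the blown-up points, and the content of the proof is then the explicit verification (which you defer to but do not perform) that fields vanishing on $\Sigma$ still span the tangent space at a general point; the paper does exactly this, after first moving the points to a normal form by the automorphism group (Claim \ref{claim-Hil}) so that the fields can be written down concretely. So (1) is fine, and matches the paper's route.

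Part (2), however, has a genuine gap, in two places. First, your reduction needs $h^0(\mathbb{F}_n,\Sym^m T_{\mathbb{F}_n}\otimes\mathcal{O}(D))\sim m^3$, i.e.\ bigness of $T_{\mathbb{F}_n}$; this does \emph{not} follow from Proposition \ref{prop-toric}, which only gives generic global generation (a trivial bundle is globally generated and far from big), so this step is unsupported within the paper's toolkit and would need a separate proof or reference. Second, and more seriously, even granting bigness, the ``$O(m^3)$ sections versus $3\cdot O(m^2)$ conditions'' count only produces a large space of sections of $\Sym^m T_X\otimes L$ surviving the lifting conditions; it says nothing about those sections spanning the rank-$(m+1)$ fibre at a general point, which is exactly what the weak-positivity definition of pseudo-effectivity in Subsection \ref{Sec2-1} demands. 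You identify this yourself as the ``main obstacle'' and propose an inductive construction for every $m$, but you never carry it out, so the proof is incomplete precisely at its crux. The paper sidesteps this infinite family of constructions by an observation you missed: since the multiplication maps $(\Sym^2 T_X)^{\otimes k}\to\Sym^{2k}T_X$ are surjective, generic global generation of the \emph{single, untwisted} bundle $\Sym^2 T_X$ already implies pseudo-effectivity (this is the implication recorded in the table in Subsection \ref{Sec2-1}). This reduces (2) to one finite computation: after normalizing the three points via Claim \ref{claim-Hil}, the paper exhibits three explicit sections $\theta_1,\theta_2,\theta_3$ of $\Sym^2 T_{\mathbb{F}_n}$ that satisfy the conditions of Lemma \ref{lem-bup} at all three points and span $\Sym^2 T_{W_1,q}$ at a general $q$. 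If you adopt that reduction to $m=2$, your open-ended inductive scheme collapses to a single checkable computation.
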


Because the general case is tedious, we first show Proposition \ref{prop-rem} in the simplest case $n=0$.

\begin{proof}[Proof of  $(1)$ for $\mathbb{F}_0$]

In general, for a birational morphism $f: Y \to Z$ between projective manifolds, 
we have the natural inclusion $f_* T_Y \subset T_Z$.  
Since the natural inclusion is of course generically isomorphism, 
$T_Z$ is generically globally generated if the tangent bundle $T_Y$ is so. 
Therefore it is sufficient for the proof of (1) to treat  only the blow-up $\pi: X \to \mathbb{F}_0$ along  general two points $p_1$, $p_2$. 

We take a Zariski open set $ \mathbb{C} \times \mathbb{C} = W_0 \subset \mathbb{F}_0$ with the local coordinate $(x,y)$. 
We may assume that $p_1= (0,0) $ and $p_2= (1,1) $ 
by using the action of the automorphism group of $\mathbb{F}_0$. 
We define the set of holomorphic vector fields on $W_0$
  \begin{flalign*}
 \mathcal{T} := 
 \Big\{  \sum_{k=0}^{2} a_k x^k \frac{\partial }{\partial x}
 + \sum_{l=0}^{2} b_l y^l   \frac{\partial }{\partial y}  \, \Big| \, a_k , b_{l} \in \mathbb{C}\Big\}.
  \end{flalign*}
We remark that any $\theta \in \mathcal{T}$ can be extended to a global holomorphic section of $\textit  T_{\mathbb{F}_0}$.
From Lemma \ref{lem-bup}, 
for a holomorphic vector field 
$$
\theta   := a(x){\partial }/{\partial x} +  b(y){\partial }/{\partial y}  \in \mathcal{T} 
$$
it follows that 
$\theta$ can be lifted to the holomorphic section  of $T_{Y}$
if and only if 
$$
\text{
$\frac{1}{s} \big( -a(s+\alpha)t+ b(st+\beta) \big)$ is holomorphic with respect to $(s,t) $
}
$$
for $(\alpha, \beta)=(0,0)$ and $(\alpha, \beta)=(1,1)$. 
We choose $\theta_1$ and $\theta_2$ in $\mathcal{T}$ as follows: 

\begin{equation*}
\theta _1 = (x^2-x)\frac{\partial }{\partial x}  \text{ and }  \theta _2 = (y^2-y)\frac{\partial }{\partial y}. 
\end{equation*}

Then we can easily see that $\pi^{*}\theta _1$ and $\pi^{*}\theta _2$ can be extended to the global holomorphic sections of $T_{X}$.
For a point $q=(x,y) \in W_0$ such that $x \neq 0,1 $ and $y \neq 0,1$, 
the vectors $\theta _1(q)$ and $\theta _2(q)$ at $q$ give a basis of $T_{W_0 , q}$. 
Therefore $T_{X}$ is generically globally generated.
\end{proof}

\begin{proof}[Proof of $(2)$ for $\mathbb{F}_0$]
We use the same notations as in the proof of (1).
Let $\pi : X \rightarrow \mathbb{F}_0$ be a blow-up of $\mathbb{F}_0$ along general three points $p_1,p_2,p_3$.  
Our goal in this proof is to show that $\Sym^{2} (T_{X})$ is generically globally generated.
Since $p_1,p_2,p_3$ are in general position, we may assume 
$p_1,p_2,p_3 \in W_0$, $p_1= (0,0) $, $p_2= (1,1) $, and $p_3= (-1,-1) $ 
by the action of the automorphism group of $\mathbb{F}_0$.

 We define $ \mathcal{T}$ by 
  \begin{flalign*}
 \mathcal{T} := 
 \left\{  \sum_{k=0}^{4} a_k x^k \Big( \frac{\partial }{\partial x} \Big)^{2}
 +  \sum_{0 \le k,l \le 2} b_{kl} x^{k}y^{l} \frac{\partial }{\partial x}  \frac{\partial }{\partial y}  
 + \sum_{k=0}^{4} c_k y^k  \Big( \frac{\partial }{\partial y}  \Big)^{2} 
\, \Big| \, a_k , b_{kl}, c_{k} \in \mathbb{C}\right\}.
  \end{flalign*}
 It is easy to show that any $\theta \in \mathcal{T}$ can be extended to a holomorphic global section of $\Sym^{2}  T_{\mathbb{F}_0}$.
 
By Lemma \ref{lem-bup}, we can see that, 
for a holomorphic section 
$$\theta = a(x)\Big( \frac{\partial }{\partial x}   \Big)^{2}+  b(x,y)\frac{\partial }{\partial x} \frac{\partial }{\partial y}  + c(y)  \Big( \frac{\partial }{\partial y}  \Big)^{2} \in \mathcal{T} , 
$$
the section $\theta$ can be lifted to the section of $\Sym^{2}  T_{\mathbb{F}_0}$ 
if and only if  the followings are holomorphic  with respect to $(s,t) \in \mathbb{C} \times  \mathbb{C}  $: 
 \begin{align*}
  &\frac{1}{s} \big( -2a(s+\alpha,st+\beta)t+ b(s+\alpha,st+\beta)\big),  \\
  &\frac{1}{s^2} \big(a(s+\alpha,st+\beta)t^2- b(s+\alpha,st+\beta)t + c(s+\alpha,st+\beta)\big), 
 \end{align*}
 for $(\alpha, \beta)=(0,0), (1,1), (-1,-1)$. 
 
 Here we put 
 \begin{align*}
& \theta_1 = y^2(x^2-1)\frac{\partial }{\partial x}  \frac{\partial }{\partial y}  + y^2(y^2-1)  \Big( \frac{\partial }{\partial y}  \Big)^{2}, \\
& \theta_2 =  x^2(x^2-1)\Big( \frac{\partial }{\partial x}   \Big)^{2} + x^2(y^2-1)\frac{\partial }{\partial x}  \frac{\partial }{\partial y},\\
&\theta_3=(x-y)^2\frac{\partial }{\partial x}  \frac{\partial }{\partial y}.
 \end{align*}
Then we can easily show that $\pi^{*}\theta_1$, $\pi^{*}\theta_2$, and $ \pi^{*}\theta_3$ can be 
extended to global holomorphic  sections of $\Sym^{2}  T_{X}$. 
For a general point $q \in W_0$, it is easy to see that 
$\theta_1(q)$, $\theta_2(q)$, and $\theta_3(q)$ give a basis of $\Sym^{2}  T_{W_0 , q}$. 
Therefore $\Sym^{2} T_{ X}$ is generically globally generated.
\end{proof}

As a preliminary of the proof for $\mathbb{F}_n$, 
we prove the following claim. 
We regard the Hirzebruch surface $\mathbb{F}_n$ for $n \geq 1$ as the hypersurface in $\mathbb{P}^1\times \mathbb{P}^2$ 
$$
\mathbb{F}_n = \{ ([X_1:X_2],[Y_0 : Y_1 : Y_2] ) \in \mathbb{P}^1\times \mathbb{P}^2 \,|\, Y_{1}X_{2}^{n}= Y_{2}X_{1}^{n} \}. 
$$
We set $ U = \{ Y_1 \neq 0 \text{ or } Y_2 \neq 0  \}$. 
We first observe  the automorphism group of $\mathbb{F}_n$ 
so that general three points move to specific points, 
which makes our computation not so hard. 
\begin{claim}
\label{claim-Hil}
General three points $p_1,p_2,p_3 \in U$ move to 
$([1:0],[1:1:0]),([1:1],[1:1:1]),([1:-1],[1:1:(-1)^n])$ by the action of the automorphism group of $\mathbb{F}_n $.
\end{claim}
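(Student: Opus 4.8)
The plan is to make the automorphism group of $\mathbb{F}_n$ completely explicit in the affine coordinates $x=X_2/X_1$ on the base of the ruling $\mathbb{F}_n\to\mathbb{P}^1$ and $w=Y_0/Y_1$ on the fibres (so that the negative section is $w=\infty$). Since for $n\ge 1$ the ruling is the unique $\mathbb{P}^1$-fibration, every automorphism preserves it and covers a unique Möbius transformation of the base, while the kernel consists of the vertical automorphisms $w\mapsto \alpha w+\beta(x)$ with $\alpha\in\mathbb{C}^*$ and $\beta$ a section of $\mathcal{O}_{\mathbb{P}^1}(n)$, i.e.\ a polynomial of degree $\le n$. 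A short computation with the homogeneous coordinates (writing $Y_0,Y_1,Y_2$ through the bundle data, $Y_1=T_1X_1^{n}$, $Y_2=T_1X_2^{n}$) shows that every automorphism acts by
$$
\tilde x=\frac{g_{21}+g_{22}\,x}{g_{11}+g_{12}\,x},\qquad
\tilde w=\frac{a\,w+P(x)}{(g_{11}+g_{12}\,x)^{n}},
$$
where $(g_{ij})\in\GL_{2}$ governs the base, $a\in\mathbb{C}^{*}$, and $P$ is a polynomial of degree $\le n$. The essential feature is the automorphy factor $(g_{11}+g_{12}x)^{-n}$: lifting a genuine Möbius transformation $(g_{12}\neq0)$ twists the fibre coordinate by a factor depending on $x$, so the effective fibre scaling is not constant along the three fibres.

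With this description I would normalize in two steps. First, three general points have distinct base images $x_1,x_2,x_3$ (two general points lie on different fibres), so I choose $(g_{ij})$ to be the unique element of $\mathrm{PGL}_{2}$ sending $x_1,x_2,x_3$ to $0,1,-1$; since the targets are finite, none of the $x_i$ is the pole of this map, whence $\gamma_i:=(g_{11}+g_{12}x_i)^{n}\neq0$. Second, I adjust the fibres: all three target points have fibre coordinate $w=1$, so it remains to find $a\in\mathbb{C}^{*}$ and $P$ of degree $\le n$ with
$$
a\,w_i+P(x_i)=\gamma_i\qquad(i=1,2,3),
$$
where $w_1,w_2,w_3$ are the fibre coordinates of the three points. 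For $n\ge 2$ this is immediate: take $a=1$ and let $P$ be the interpolating polynomial (of degree $\le 2\le n$) through the three values $\gamma_i-w_i$ at the distinct nodes $x_i$. The resulting map is a genuine automorphism, and by construction it carries $p_1,p_2,p_3$ to the prescribed triple.

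The step I expect to require the most care is precisely this fibre normalization, namely solving the displayed linear system while keeping the transformation nondegenerate, i.e.\ $a\neq0$. The number of free coefficients of $P$ is $n+1$, so for $n\ge 2$ there is ample room to interpolate three nodes, and genericity of the points enters only to keep the $x_i$ distinct and away from the pole of the base map. The borderline case is small $n$: when $n=1$ the polynomial $P$ has only two coefficients and the factors $\gamma_i$ are themselves linear in $x_i$, so the equations become homogeneous in $(a,\,p_0-g_{11},\,p_1-g_{12})$ and for general points force $a=0$; geometrically this reflects that the chosen targets are special under the contraction $\mathbb{F}_1\to\mathbb{P}^2$ (they become collinear), and that case must be handled separately. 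Thus the heart of the argument is the explicit automorphy-factor formula together with the interpolation, and the only genuine subtlety is guaranteeing non-degeneracy of the fibre adjustment, which holds for $n\ge 2$ and general points.
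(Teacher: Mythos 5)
Your construction is essentially the paper's own argument: the paper realizes $\mathrm{Aut}(\mathbb{F}_n)$ as the semidirect product $G_n=P_n\rtimes \GL(2,\mathbb{C})$ acting in homogeneous coordinates (a M\"obius transformation on the base together with a translation of the fibre coordinate by a degree-$n$ polynomial, twisted by exactly the automorphy factor you wrote down, with the scalar matrices supplying your fibre scaling $a$), first normalizes the three base images to $[1:0],[1:1],[1:-1]$, and then solves a linear system for the vertical part. For $n\ge 2$ the paper's choice of coefficients $a_0,a_1,a_m$ is just an explicit instance of your interpolation step, so on that range your proof is correct and matches the paper; your Lagrange-interpolation formulation is cleaner than the paper's hand-picked constants.

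The case $n=1$ that you leave open is not a gap in your method but an error in the statement itself, and your diagnosis is exactly right. Under the contraction $\beta:\mathbb{F}_1\to\mathbb{P}^2$, $([X_1:X_2],[Y_0:Y_1:Y_2])\mapsto [Y_0:Y_1:Y_2]$, of the negative section $\{Y_1=Y_2=0\}$, every automorphism of $\mathbb{F}_1$ descends to an element of $\mathrm{PGL}_3$ fixing $[1:0:0]$ and hence preserves collinearity; the three stated targets map to $[1:1:0],[1:1:1],[1:1:-1]$, which all lie on the line $\{Y_0=Y_1\}$, while three general points of $\mathbb{F}_1$ map to a non-collinear triple. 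So for $n=1$ no automorphism produces the stated triple: your homogeneous system forcing $a=0$ is precisely a proof that Claim \ref{claim-Hil} is false as printed in that case. The statement actually needed (and used) in the proof of Proposition \ref{prop-rem} has third target $(-1,-1)$ in the chart $W_1$, i.e.\ $([1:-1],[1:-1:(-1)^{n+1}])$, whereas the printed target $([1:-1],[1:1:(-1)^n])$ is the point $(-1,1)\in W_1$; this sign discrepancy is harmless for $n\ge 2$ (interpolation reaches either triple) but fatal for $n=1$. With the corrected target your scheme closes $n=1$ as well: the system $aw_i+P(x_i)=t_i(g_{11}+g_{12}x_i)$ with $(t_1,t_2,t_3)=(1,1,-1)$ is now inhomogeneous, any solution with $a=0$ would force $g_{11}+g_{12}x_3=0$, which is excluded since the base M\"obius map does not send $x_3$ to infinity, so the unique solution for general points gives a genuine automorphism. (Alternatively, the corrected $n=1$ case follows at once from simple transitivity of $\mathrm{PGL}_3$ on ordered $4$-tuples of points in general position, applied to $[1:0:0]$ together with the three points.) You should state this correction explicitly rather than deferring the case.
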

\begin{proof}
Let $S,T$ be variables and  $P_n$ be a  vector subspace of homogeneous polynomials of degree $n$ in $\mathbb{C}[S,T]$.
The linear group $ \GL(2,\mathbb{C})$  acts on  $P_n$ as follows:
For any $\left( \begin{matrix}
a & b \\
c & d 
\end{matrix} \right) \in  \GL(2,\mathbb{C})$ and any $\sum_{k=0}^{n }a_k S^{k}T^{n-k} \in P_n$, we define the action by
$$
\left( \begin{matrix}
a & b \\
c & d 
\end{matrix} \right)  \bullet \Big( \sum_{k=0}^{n }a_k S^{k}T^{n-k}  \Big) :=
\sum_{k=0}^{n }a_k (aS+bT)^{k}(cS+dT)^{n-k}.
$$
This induces the semidirect product $ G_n :=P_n \rtimes \GL(2,\mathbb{C})$.

For any 
$
g=( \sum_{k=0}^{n }a_k S^{k}T^{n-k}, 
\left( \begin{matrix}
a & b \\
c & d 
\end{matrix} \right) 
) \in G_n, 
$
we define the action of $\mathbb{F}_n$ as follows: 
For any $q=([X_1:X_2],[Y_0 : Y_1 : Y_2] ) \in \mathbb{F}_n$, we define  $g(q)$ by  
$$
([aX_1 + bX_2 : cX_1 + d X_2], [ Y_{0}X_{1}^n + Y_1 \sum_{k=0}^{n}a_{k} X_{1}^{k} X_{2}^{n-k}  : Y_1(aX_1 + bX_2)^n : Y_1(cX_1 + d X_2)^n]),
$$
 if $X_1 \neq 0$ and by 
$$
([aX_1 + bX_2 : cX_1 + d X_2],
[ Y_{0}X_{2}^n + Y_2 \sum_{k=0}^{n}a_{k} X_{1}^{k} X_{2}^{n-k}  : Y_2(aX_1 + bX_2)^n : Y_2(cX_1 + d X_2)^n]) \\
$$
if $X_2 \neq 0$ (see \cite[Theorem 4.10]{DI09} or \cite[Section 6.1]{Bla12}).

Note that the ruling $\phi : \mathbb{F}_n  \rightarrow \mathbb{P}^1$ 
coincides with the first projection.
We may assume that $p_1$, $p_2$ and $p_3$ are in $U$ 
and also that the images of them in $\mathbb{P}^1$ are different from each other. 
By the action of $g=(0, 
\left( \begin{matrix}
a & b \\
c & d 
\end{matrix} \right) 
)  $,
we obtain 
$$
\phi (g(p_1))=[1:0], \quad \phi (g(p_2))=[1:1],  \quad \phi (g(p_3))=[1:-1]
$$
if we properly choose $g$.
Therefore we may assume 
$$
p_1 =([1:0] ,[x_1 : y_1:0]) , p_2 =([1:1] ,[x_2 : y_2 : y_2]) , p_3 =([1:-1] ,[x_3 : y_3 : (-1)^{n} y_3]).
$$
It follows that $y_k \neq 0$ for $k = 1,2,3$ 
since we have $ g \cdot U \subset U$ for any $g\in G_n$.

In the case of $n =1$ 
we put
$$
a=\frac{x_1}{y_1} - \frac{x_2}{2y_2} -\frac{x_3}{2y_3}, \quad 
a_0= - \frac{x_2}{2y_2} -\frac{x_3}{2y_3}, \quad 
a_1=\frac{x_1}{y_1} - \frac{x_2}{y_2}. 
$$
Then $p_{1}$, $p_{2}$, $p_{3}$ respectively move to $([1:0],[1:1:0])$, $ ([1:1],[1:1:1])$, $([1:-1],[1:1:(-1)^n])$ 
by the action of $(a_{0}S + a_{1}T , 
\left( \begin{matrix}
a & 0 \\
0 & a 
\end{matrix} \right) 
) \in G_1,
$
since we may assume ${x_1}/{y_1} - {x_2}/{2y_2} -{x_3}/{2y_3} \neq 0 $  since $p_1,p_2,p_3$ are general points.

In the case of $n \geq 2$, 
we put $m = 2\lfloor n/2 \rfloor$, 
$$
a_0=\frac{x_1 - y_1}{y_1}, \quad 
a_1=-\frac{x_2-y_2}{2y_2} + \frac{x_3 + y_3}{2y_3}, \quad 
a_m=- \frac{x_1 - y_1}{y_1} - \frac{x_2 - y_2}{2y_2} -\frac{x_3 + y_3}{2y_3}, 
$$
and $a_k = 0$ for $k \neq 0,1,m$.
Then $p_{1}$, $p_{2}$, $p_{3}$ respectively move to $([1:0],[1:1:0])$, $([1:1],[1:1:1])$, $([1:-1],[1:1:(-1)^n])$ 
by the action of $( \sum_{k=0}^{n}a_{k}S^{k}T^{n-k} , 
\left( \begin{matrix}
1 & 0 \\
0 & 1 
\end{matrix} \right) 
) \in G_n
$.
\end{proof}

\begin{proof}[Proof of $(1)$ for $\mathbb{F}_n$]
We define the Zariski open sets $W_k \cong \mathbb{C} \times \mathbb{C} $ in $\mathbb{F}_n$ for $k=1,2,3$ as follows:
$$
 \begin{array}{cccccc}
 i_1 :  W_1 &\rightarrow & \mathbb{F}_n 
 &  i_2 :  W_2 &\rightarrow & \mathbb{F}_n \\

   (x,y )&\mapsto  &  ([1:x], [1 : y :x^{n}y] ),
 & (u,v )&\mapsto  &  ([1:u],[v: 1 : u^{n} ]),

 \end{array}
 $$
 $$
  \begin{array}{ccc}
   i_3 :  W_3 &\rightarrow &  \mathbb{F}_n \\
  (\zeta,\eta )&\mapsto  &  ([\zeta : \eta] ,[1:\zeta^{n}\eta :\eta]). 
 \end{array}
 $$

We take $\theta = a(x,y)\partial/\partial x + b(x,y) \partial / \partial y \in H^0(W_1,T_{W_1})$.
The section $\theta$ extends to a holomorphic global section of $T_{\mathbb{F}_n}$ if and only if 
 $ \theta$ is holomorphic on $W_2$ and $W_3$,
since the codimension of $\mathbb{F}_n \setminus \cup_{k=1,2,3 } W_k$ is two.
A straightforward computation yields 
\begin{align*}
&\theta =a(u,1/v)\frac{\partial}{\partial u} -v^2b(u,1/v)\frac{\partial}{\partial v} 
\quad \text{ on $ W_1 \cap W_2$, }\\
&\theta =-\zeta^2a(1 /\zeta,\zeta^n \eta)\frac{\partial}{\partial \zeta} 
 + \Big( n \zeta \eta a(1 / \zeta,\zeta^n \eta) + \frac { b(1 / \zeta,\zeta^n \eta) }{\zeta^n}  \Big) \frac{\partial}{\partial \eta} \quad \text{ on $ W_1 \cap W_3$.}
\end{align*}
Hence it can be seen that 
the section $\theta$ can be extended  to a global holomorphic  section of $T_{\mathbb{F}_n}$
if and only if we define $a(x,y)$  and $b(x,y)$ to be 
\begin{align*}
a(x,y) = a_0 + a_1 x+a_2 x^2 \quad  \text{ and }\quad 
b(x,y) =  (b_0 - n a_2 x)y + b_1(x) y^2
\end{align*}
for some $a_0 , a_1  , a_2, b_0\in \mathbb{C}$ and for some $b_1(x) \in \mathbb{C}[x]$ with $ \deg(b_1) \le n$.
We define
$$
\mathcal{T} := \Big\{ (a_0 + a_1 x+a_2 x^2)\frac{\partial }{\partial x} +  (b_{0}y  - n a_{2} xy+ b_{1} y^2 + b_{2}x y^2)\frac{\partial }{\partial y} 
\,\Big|\,  a_0 , a_1  , a_2, b_0 ,b_1,b_2 \in \mathbb{C} \Big\}. 
$$
Then, by the above observation, it can be seen that 
any $\theta \in \mathcal{T}$ extends to a holomorphic global section of $T_{\mathbb{F}_n}$.

Let $\pi : X \rightarrow \mathbb{F}_n$ be the blow-up  of $\mathbb{F}_n$ along general two points $p_1,p_2$.
By Claim \ref{claim-Hil}, we may assume $p_1,p_2 \in W_1 $,
$p_1= (0,1) $ and $p_1= (1,1) $.
We choose $\theta_1$ and $\theta_2$ in $\mathcal{T}$ as follows: 
$$
\theta _1 = y(y-1)\frac{\partial }{\partial y} 
\quad  \text{ and }\quad 
\theta _2 = x(x-1)\frac{\partial }{\partial x} +nxy(y-1)\frac{\partial }{\partial y}. 
$$
By  Lemma \ref{lem-bup}, 
the sections $\theta _1$ and $\theta _2$ can be lifted  to holomorphic global sections of $T_{X}$.
For any point $q=(x,y) \in W_1$ such that $x \neq 0,1 $ and $y \neq 0,1$, $(\theta _1)_q$ and $(\theta _2)_q$ give a basis of $T_{W_1 , q}$. 
Therefore $T_{ X}$ is generically globally generated.
\end{proof}

\begin{proof}[Proof of  $(2)$ for $\mathbb{F}_n$]
Let $\pi : X \rightarrow \mathbb{F}_n$ be a blow-up of $\mathbb{F}_n$ along general three points $p_1,p_2,p_3$.  We show that 
$\Sym^{2} (T_{X})$ is generically globally generated.
By Claim \ref{claim-Hil}, we may assume 
$p_1,p_2,p_3 \in W_1$,
$p_1= (0,1) $, $p_2= (1,1) $,  and $p_3= (-1,-1) $. 

We take 
$$
\theta = a(x,y)\Big( \frac{\partial }{\partial x}   \Big)^{2}+  b(x,y)\frac{\partial }{\partial x}  \frac{\partial }{\partial y}  + c(x,y) \Big( \frac{\partial }{\partial y}  \Big)^{2}\in H^0(W_1,\Sym^{2}  T_{W_1}). 
$$ 
First we investigate the  condition when $\theta$ extends to a global holomorphic  section of $\Sym^{2}  T_{\mathbb{F}_n}$.
We have
$$
\theta =
a(u,1/v)\Big( \frac{\partial }{\partial u}  \Big)^{2}
-v^2b(u,1/v)\frac{\partial}{\partial u}\frac{\partial}{\partial v} 
+v^{4}c(u,1/v)\Big( \frac{\partial }{\partial v}  \Big)^{2}
\text{ on $ W_1 \cap W_2$ and, }
$$ 
 \begin{align*}
\theta & =
\zeta^{4}a(1/\zeta,\zeta^{n} \eta)\Big( \frac{\partial }{\partial \zeta}  \Big)^{2}
+\Big( -2n\zeta^{3}\eta a(1/\zeta,\zeta^{n} \eta) -\frac{1}{\zeta^{n-2}} b(\zeta,\zeta^{n} \eta) \Big) \frac{\partial}{\partial \zeta} \frac{\partial}{\partial \eta} \\
 &+\Big( n^{2}\zeta^{2}\eta^{2} a(1/\zeta,\zeta^{n} \eta)  +\frac{n \eta}{\zeta^{n-1}} b(1/\zeta,\zeta^{n} \eta)  
 +\frac{1}{\zeta^{2n}}c(1/\zeta,\zeta^{n} \eta) \Big) \Big( \frac{\partial }{\partial \eta}  \Big)^{2} \text{ on $ W_1 \cap W_3$.}
\end{align*}

In the case of $n=1$, the section 
$\theta$ extends to a  global holomorphic section of $\Sym^{2}  T_{\mathbb{F}_n}$ if we have 
\begin{itemize}
\item $a(x,y)=a_0 + a_{1}x+a_{2}x^2+a_{3}x^3+a_{4}x^4 $,
\item $b(x,y)= (b_{0}+b_{1}x+b_{2}x^{2}-2a_{4}x^{3} )y+(b_{3}+b_{4}x+b_{5}x^2 +b_{6}x^3)y^2$,
\item$ c(x,y)= (c_{0}-(a_{3} +b_{2})x+a_{4}x^2)y^2+(c_1+c_{2}x  -  b_{6} x^2)y^3
+(c_{3}+c_{4}x+c_{5}x^2+c_{6}x^3+c_{7}x^4)y^4$,
\end{itemize}
where all coefficients are constant.
Here we put 
  \begin{align*}
\bullet \ \  \theta_1 &=
 x(x^2 - 1)\Big( \frac{\partial }{\partial x}   \Big)^{2} 
+y\Big(-3x^2 + y(x^3 + x^2 + x - 1) + 1\Big)\frac{\partial }{\partial x}  \frac{\partial }{\partial y}  \\
&+y^2\Big(2x + y^2(x^2 + 1) - y(x + 1)^2\Big)\Big( \frac{\partial }{\partial y}  \Big)^{2},\\
\bullet \ \   \theta_2 &=  
 x^2(-x^2 + 1)\Big( \frac{\partial }{\partial x}   \Big)^{2} 
 +2x^2y(x - y)\frac{\partial }{\partial x} \frac{\partial }{\partial y}  
+x^2y^2(y^2 - 1)  \Big( \frac{\partial }{\partial y}  \Big)^{2},\\
\bullet  \ \  \theta_3&=
x(-x^2 + 1)\Big( \frac{\partial }{\partial x}   \Big)^{2} 
+ y\Big(3x^2 + y(-x^2 - 2x + 1) - 1\Big)\frac{\partial }{\partial x}  \frac{\partial }{\partial y} \\ 
&+y^2\Big(-2x + y^2(2x - 1) + 1\Big)\Big( \frac{\partial }{\partial y}  \Big)^{2}.
 \end{align*}
Then, by using Lemma \ref{lem-bup} again, 
the sections $\pi^{*}\theta_1$, $\pi^{*}\theta_2$ and $ \pi^{*}\theta_3$ 
extend to holomorphic global sections of $\Sym^{2}  T_{X}$.
For a general point $q \in W_1$, $\theta_1(q)$, $\theta_2(q)$, and $\theta_3(q)$ give basis of $\Sym^{2}  T_{W_1, q}$. 
Therefore $\Sym^{2} T_{ X}$ is generically globally generated.

In the case of $n \geq 2$, 
the section $\theta$ extends to a holomorphic global section of $\Sym^{2}  T_{\mathbb{F}_n}$ if
\begin{itemize}
\item $a(x,y)=a_0 + a_{1}x+a_{2}x^2+a_{3}x^3+a_{4}x^4 $,
\item $b(x,y)= (b_{0}+b_{1}x+b_{2}x^{2}-2na_{4}x^{3} )y+(b_{3}+b_{4}x+b_{5}x^2 +b_{6}x^3)y^2$,
\item$ c(x,y)= (c_{0} - (n^{2}a_{3}  + nb_{2})x+n^{2}a_{4}x^2)y^2+(c_1+c_{2}x + c_{3} x^2)y^3
+(c_{4}+c_{5}x+c_{6}x^2+c_{7}x^3+c_{8}x^4)y^4$,
\end{itemize}
where all coefficients are constant.
We put 
\begin{align*}
& \bullet \ \  \theta_1 =xy^2(x^2 - 1)\frac{\partial }{\partial x}  \frac{\partial }{\partial y}  
+y^3\Big(-3x^2 + y(-x^4 + 2x^3 + 2x^2 - 1) + 1\Big)\Big( \frac{\partial }{\partial y}  \Big)^{2},\\
 &\bullet \ \  \theta_2 =  
xy^2(x^2 - 1)\frac{\partial }{\partial x}  \frac{\partial }{\partial y}  
+y^2\Big(xy^2(x + 2) - y(x + 1)^2 + 1\Big)\Big( \frac{\partial }{\partial y}  \Big)^{2},\\
&\bullet \ \  \theta_3=x(x^3 - 2x^2 - x + 2)\Big( \frac{\partial }{\partial x}   \Big)^{2} \\
&+y\Big(-2nx^3 + 6x^2 + 2x(n - 1) -2 + y\big(nx(n - 6) + x^3(-n^2 + 6n - 4) + 2\big) \Big)\frac{\partial }{\partial x}  \frac{\partial }{\partial y} \\
&+y^2\Big(nx(nx + 2n - 6) + 2n +1 + y\big(-n^2(x+1)^2 + y(n^2 + 6nx - 2n - 1)\big) \Big)\Big( \frac{\partial }{\partial y}  \Big)^{2}. 
\end{align*}
Then $\pi^{*}\theta_1$, $\pi^{*}\theta_2$ and $ \pi^{*}\theta_3$ extend to holomorphic global sections of $\Sym^{2}  T_{X}$.
For a general point $q \in W_1$, $\theta_1(q)$, $\theta_2(q)$ and $\theta_3(q)$ give basis of $\Sym^{2}  T_{W_1, q}$. 
Therefore $\Sym^{2} T_{ X}$ is generically globally generated.
\end{proof}

\newpage


\end{document}